\def\th@plain{%
	\upshape 
}
\renewenvironment{proof}[1][\proofname]{\par
	\pushQED{\qed}%
	\normalfont \topsep6\p@\@plus6\p@\relax
	\trivlist
	\item[\hskip\labelsep
	\bfseries
	#1\@addpunct{.}]\ignorespaces
}{%
	\popQED\endtrivlist\@endpefalse
}
\newtheorem{theorem}{Theorem}
\newtheorem{lemma}[theorem]{Lemma}
\newtheorem{corollary}[theorem]{Corollary}
\theoremstyle{definition}
\newtheorem{definition}[theorem]{Definition}
\newtheorem{observation}[theorem]{Observation}
\newtheorem{claim}{Claim}
\Crefname{claim}{Claim}{Claims}
\Crefname{observation}{Observation}{Observations}
\definecolor{DodgerBlue}{RGB}{30,144,255}
\definecolor{SkyBlue}{RGB}{135,206,235}
\definecolor{DeepSkyBlue}{RGB}{0,191,255}
\definecolor{MidnightBlue}{RGB}{25,25,112}
\title{Brooks' theorem for signed graphs with $\Delta=3$}
\author[1,2]{Reza Naserasr}
\author[1,2]{Huan Zhou \thanks{Corresponding author.}}
\affil[1]{\small Universit\'e Paris Cit\'e, CNRS, IRIF, F-75013, Paris, France. {Emails: \texttt{\{reza, zhou\}@irif.fr}}}
\affil[2]{\small Zhejiang Normal University, Jinhua, China. {Email: \texttt{huanzhou@zjnu.edu.cn}}}
\begin{document}
	\maketitle

	\begin{abstract}
Circular $r$-coloring of a signed graph $(G,\sigma)$ is a mapping of its vertices to a circle of circumference $r$ such that: I. each pair of vertices with a negative connection is at distance at least $1$, and II. for each pair with a positive connection, the distance of one from the antipodal of the other is at least $1$. A signed graph $(G,\sigma)$ admits a circular $r$-coloring for some values of $r$ if and only if it has no negative loop. The smallest value of such $r$ is the circular chromatic number, denoted  $\chi_{c}(G,\sigma)$.
The circular chromatic number is a refinement of the balanced chromatic number, which is mostly studied under the equivalent term $0$-free coloring in the literature. 

Extending Brooks' theorem,  M\'a\v cajov\'a,  Raspaud, and \v{S}koviera showed that if $\Delta(G)$ is an even number, $G$ is connected, and $(G,\sigma)$ is not (switching) isomorphic to $(K_{\Delta+1},-)$ or $C_{-\ell}$ (when $\Delta(G)=2$), then $\chi_c(G,\sigma)\leq \Delta(G)$ and that the upper bound is tight. For the odd values of $\Delta(G)$, assuming a connected signed graph $(G,\sigma)$ is not  isomorphic to $(K_{\Delta+1},-)$, determining the best upper bound for $\chi_c(G, \sigma)$  proves to be more of a challenge. In this work, addressing the first step of this question, we show that if $(G, \sigma)$ is a signed graph of maximum degree 3 with no  component  isomorphic to $(K_4, -)$, then $\chi_{c}(G, \sigma)\leq \frac{10}{3}$. The upper bound is tight even among signed cubic graphs of girth 5. In particular, there is a signature on the Petersen graph for which the upper of $\frac{10}{3}$ is achieved. 
	\end{abstract}
	\section{Introduction}
	
	A \emph{signed graph}  $(G,\sigma)$ is a graph $G=(V,E)$ together with an assignment $\sigma:E(G)\rightarrow \{+,-\}$. When $\sigma$ is of little importance, we may write $\widehat{G}$ in place of $(G,\sigma)$. An edge with sign $-$ ($+$, respectively) is a \emph{negative edge} (\emph{positive edge}). We may write $(G,-)$ if $\sigma$ assigns $-$ to all edges. A digon, denoted $\widetilde{K}_2$, is the signed multigraph with two vertices and two parallel edges connecting them of which one is positive and the other is negative. In a signed graph, the sign of a (closed) walk is defined as the product of the signs of the edges in that walk. 
	
	In a signed graph $(G, \sigma)$, \emph{switching} a vertex $v$ means to reverse the sign of each edge incident to $v$, noting that the sign of a loop must be reversed twice. More generally, for a vertex subset $X \subseteq V(G)$, switching $X$ is to switch each vertex in $X$. That is equivalent to reversing the signs of all edges in the edge-cut $E(X, V(G) \backslash X)$. Observe that the sign of a closed walk does not change after a switching. 
	Two signed graphs are said to be \emph{switching equivalent} if one can be obtained from the other by a switching a subset $X$ of vertices. A fundamental Theorem of signed graphs, proved by Zaslavsky \cite{zaslavsky82}, is that two signed graphs $(G, \sigma)$ and $(G, \sigma')$ are switching equivalent if and only if they have the same set of positive cycles. 	
	
	An edge-sign preserving homomorphism of a signed graph $(G, \sigma)$ to a signed graph $(H, \pi)$ is a mapping $f: V(G) \rightarrow V(H)$ such that for every edge $uv$ of $(G, \sigma)$, $f(u)f(v)$ is an edge of $(H, \pi)$, and $\sigma(uv)=\pi(f(u)f(v))$. We  write $(G, \sigma) \xrightarrow{s. p.}(H, \pi)$ if there exists an edge-sign preserving homomorphism from $(G, \sigma)$ to $(H, \pi)$. 
	A signed graph $(G, \sigma)$ is said to admit a homomorphism to $(H,\pi)$ if a signed graph $\left(G, \sigma^{\prime}\right)$ which is switching equivalent to $(G, \sigma)$ admits a singed preserving homomorphism to $(H, \pi)$. When there exists a homomorphism we write $(G, \sigma) \rightarrow (H, \pi)$. For more on homomorphisms of signed graphs we refer to \cite{NRS15} and \cite{NSZ21}. An \emph{isomorphism} between $(G, \sigma)$ and $(H, \pi)$ is a homomorphism of one to the other which is a bijection. When there is an isomorphism between the two signed graphs, we write $(G, \sigma) \cong (H, \pi)$.

	For integers $p \geqslant 2 q>0$ such that $p$ is even, the \emph{signed circular clique} $K_{p ; q}^s$ has vertex set $\{0,1, \ldots, p-1\}$, in which $ij$ is a negative edge if and only if $q \leqslant|i-j| \leqslant p-q$, and $ij$ is a positive edge if and only if either $|i-j| \leqslant \frac{p}{2}-q$ or $|i-j| \geqslant \frac{p}{2}+q$. Note that in $K_{p ; q}^s$, each vertex $i$ is incident to a positive loop.  
	
	Among the various equivalent definitions of the circular chromatic number of a signed graph given in \cite{NWZ21}, we adopt the following in this work, but noting that for suitable connection with the theory of balanced coloring (see \cite{JMNNQ25} and references therein for more details), we have swapped the conditions imposed by the negative and the positive connections.

	\begin{definition}
		A signed graph $(G, \sigma)$ with no negative loop admits a circular $\frac{p}{q}$-coloring, where $p$ and $q$ are positive integers with $p$ being even and satisfying $p\geq 2q$, if  $(G, \sigma) \xrightarrow{s . p .} K_{p ; q}^s$. The circular chromatic number of $(G, \sigma)$, denoted $\chi_c(G, \sigma)$, is defined as  
		
		$$
		\chi_c(G, \sigma)=\inf \{\frac{p}{q} \mid (G, \sigma) \xrightarrow{s . p .} K_{p ; q}^s\}.
		$$
	\end{definition}
	
	Observe that in $K_{p ; q}^s$, the vertices $i$ and $i+\frac{p}{2}$ are \emph{anti-twin}, meaning if a vertex $j$ is adjacent to $i$ with the sign $\sigma(ij)$, then it is adjacent to $i+\frac{p}{2}$ with the sign $-\sigma(ij)$. Let $\widehat{K}_{p ; q}^s$ be the signed subgraph of $K_{p ; q}^s$ induced by vertices $\left\{0,1, \ldots, \frac{p}{2}-1\right\}$. Observe that switching $\left\{\frac{p}{2}, \frac{p}{2}+1, \ldots, p-1\right\}$ and then identifying each vertex $i$ with $i+\frac{p}{2}$ is a homomorphism of $K_{p ; q}^s$ to $\widehat{K}_{p ; q}^s$. Thus the circular chromatic number of $(G, \sigma)$ can be equivalently defined as: 
	$$
	\chi_c(G, \sigma)=\inf \left\{\frac{p}{q} \mid p (G, \sigma) \rightarrow \widehat{K}_{p ; q}^s\right\}. 
	$$
	
	\subsection{Brooks type theorems}
	Considering the structure of $K_{2p ; 1}^s$, it follows from the definitions that a signed graph $(G,\sigma)$ admits a circular $2p$-coloring if and only if there is an assignment $c$ of colors $\{\pm 1, \pm 2, \ldots, \pm p\}$ to the vertices such that for each edge $e=xy$, we have $c(x)\neq - \sigma(e)c(y)$. In this notation, $-i$ represents $i+p$. This coloring has the property that the vertices which are assigned colors $i$ or $-i$ induce a balanced set, i.e., they induce no negative cycle. For more on this subject we refer to \cite{JMNNQ25}.  
	
	Thus if we consider circular $2p$-colorings of $(G, -\sigma)$, then the condition for $c$ to be a circular coloring is that $c(x)\neq \sigma(e)c(y)$.  That is equivalent to being a 0-free $p$-coloring of $(G, \sigma)$, introduced in \cite{zaslavsky82} and further studied as a proper coloring of signed graphs. In particular a result of \cite{MRS16} in extending Brooks' theorem to 0-free colorings of signed graphs can be restated as follows. 
	
	\begin{theorem}\label{thm:BrooksDelat=2k}
		If $(G, \sigma)$ is a connected signed simple graph of maximum degree at most $p$ where $p$ is a positive even integer, $p\geq 4$, unless $(G, \sigma)$ is (switching) isomorphic to $(K_{p+1}, -)$, it admits a circular $p$-coloring. Moreover, the upper bound of $p$ is tight. 
	\end{theorem}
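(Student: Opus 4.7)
The plan is to use the equivalence developed just before the statement to rephrase a circular $p$-coloring of $(G,\sigma)$ as an assignment $c:V(G)\to\{\pm 1,\ldots,\pm p/2\}$ satisfying $c(x)\ne -\sigma(e)c(y)$ on every edge $e=xy$. Each vertex has exactly $p$ admissible colors and each incident edge forbids at most one of them, so the reduction to the $p$-regular case is a standard greedy argument along a BFS ordering rooted at a vertex of degree less than $p$. If $G$ is $p$-regular but has a cut vertex $v$, the degree of $v$ in each block is strictly less than $p$, so each block admits a circular $p$-coloring by the greedy step; the palette symmetry (rotation of colors, which preserves validity of circular colorings) then aligns the color of $v$ across blocks into one global coloring.

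The first substantive case is that $G$ is $p$-regular and $2$-connected with $G\ne K_{p+1}$. Here we invoke the classical structural lemma underlying Brooks' theorem: there exist vertices $u,w,v$ with $uw,vw\in E(G)$, $uv\notin E(G)$, and $G\setminus\{u,v\}$ connected. We color $u$ and $v$ first, choosing $c(u),c(v)$ so that $\sigma(uw)c(u)=\sigma(vw)c(v)$, which is always solvable since $p\ge 4$. We then extend along a reverse BFS ordering of $G\setminus\{u,v\}$ rooted at $w$: every intermediate vertex has an uncolored parent, hence at most $p-1$ colored neighbors; and $w$'s $p$ colored neighbors contribute only $p-1$ distinct forbidden colors because $u$ and $v$ jointly forbid only one.

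The final and most delicate case is $G=K_{p+1}$ with $(G,\sigma)\not\cong(K_{p+1},-)$. By Zaslavsky's theorem, non-equivalence to $(K_{p+1},-)$ means that $(G,\sigma)$ has a positive cycle, and a short chord-based decomposition in the clique $K_{p+1}$ produces from it a positive triangle $xyz$. After switching, we may assume the three edges of $xyz$ are all positive, and set $c(x)=c(y)=c(z)=1$ (which is compatible, since a positive edge only forbids antipodal colors). We extend greedily to the remaining $p-2$ vertices, which form a clique $K_{p-2}$: at step $i$, the vertex being colored has $i-1$ already-colored neighbors inside this clique plus $x,y,z$, and the latter three together contribute at most two forbidden colors; the total forbidden set has size at most $(i-1)+2\le p-1$, leaving at least one valid choice from the palette of $p$. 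Tightness of the bound then follows from $\chi_c(K_{p+1},-)=\chi(K_{p+1})=p+1$, which simultaneously explains why this signature must be excluded.

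The main obstacle is precisely the clique subcase: the palette saving purchased by coloring the positive triangle monochromatically is exactly what the greedy count requires, and the hypothesis $p\ge 4$ is essential to keep the bound $(i-1)+2\le p-1$ strict throughout (for $p=2$ one recovers the odd-negative-cycle exceptions recorded in the abstract, which do not arise in our regime).
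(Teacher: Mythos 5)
The paper does not supply a proof of this statement; it is quoted from M\'a\v cajov\'a, Raspaud, and \v{S}koviera \cite{MRS16} as a restatement of their result on $0$-free colorings, so there is no in-paper argument to compare yours against. On its own terms, your proof of the upper bound is essentially sound and follows the natural adaptation of the classical Brooks argument: rephrase a circular $p$-coloring, $p$ even, as an assignment $c:V(G)\to\{\pm 1,\dots,\pm p/2\}$ with $c(x)\neq -\sigma(e)c(y)$, so that the palette has size $p$ and each colored neighbor forbids exactly one color; greedy reverse-BFS handles the non-regular case; the block tree together with the rotation-transitivity of the circular palette handles cut vertices; the classical $2$-connectivity lemma yields $u,w,v$ with $uv\notin E$, $uw,vw\in E$ and $G-u-v$ connected, and choosing $c(u),c(v)$ with $\sigma(uw)c(u)=\sigma(vw)c(v)$ makes $u,v$ forbid a single common color on $w$; and for $K_{p+1}$ not equivalent to $(K_{p+1},-)$, the existence of a positive triangle is correct. (A cleaner route to that triangle than a chord decomposition: switch so that all edges at some vertex $v$ are negative; then each triangle $vxy$ has the sign of $xy$, so if all triangles were negative the whole signature would be all-negative.) Your greedy count $(i-1)+2\le p-1$ in the clique case also checks out.

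The one genuine gap is the tightness clause. The theorem asserts that the bound $p$ is attained by some connected signed simple graph of maximum degree at most $p$ that is not isomorphic to $(K_{p+1},-)$. Computing $\chi_c(K_{p+1},-)=p+1$ explains why this signature must be excluded, but it does not exhibit any graph covered by the theorem with circular chromatic number exactly $p$, so it does not establish tightness. A correct witness is $(K_p,-)$: it has maximum degree $p-1\le p$, it is not $(K_{p+1},-)$, every pair of vertices must be placed at circular distance at least $1$, and therefore $\chi_c(K_p,-)=p$.
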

	
	The introduction of circular coloring then raises the question of finding the best possible upper bound for the circular chromatic number of signed simple graphs having maximum degree $2k+1$. The question proves to be of higher difficulty compared to the case of even maximum degree. In this work, addressing the first case of the question, we prove the following.
	
	\begin{theorem}\label{thm:BrooksDelat=3}
		If $(G, \sigma)$ is a connected signed simple graph of maximum degree 3 which is not (switching) isomorphic to $(K_4,-)$, then $\chi_c(G, \sigma) \leq \frac{10}{3}$.
	\end{theorem}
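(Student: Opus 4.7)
The plan is to proceed by a minimum counterexample argument: assume $(G,\sigma)$ is a connected signed simple graph with $\Delta(G)\le 3$, not (switching) isomorphic to $(K_4,-)$, satisfying $\chi_c(G,\sigma)>\tfrac{10}{3}$, and minimizing $|V(G)|$. By the definition of $\chi_c$, it suffices to construct an edge-sign preserving homomorphism $(G,\sigma)\xrightarrow{s.p.}K^s_{10;3}$. Identifying the ten vertices of $K^s_{10;3}$ with points on $C_{10}$, the defining relations take a clean form: if $uv$ is positive then $c(v)-c(u)\in\{-2,-1,0,1,2\}\pmod{10}$, while if $uv$ is negative then $c(v)-c(u)\in\{3,4,5,6,7\}\pmod{10}$. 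Hence, for every colored neighbor of a vertex $v$, the set of admissible colors at $v$ is an arc of length $5$ on $C_{10}$, whose position depends on the neighbor's color and the sign of the edge. Finding a proper coloring amounts to arranging, at every vertex, that the intersection of at most three such arcs is non-empty.

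First I would dispose of vertices of small degree. If $v$ has degree at most two, I apply minimality to $G-v$, treating separately the degenerate case where a component of $G-v$ is $(K_4,-)$. For a degree-one $v$, five of the ten colors are always admissible. For a degree-two $v$, the only obstruction is that the two arcs of length $5$ are perfectly complementary, which pins the two neighbor-colors uniquely modulo the edge signs; in this rare situation a short Kempe-type recoloring (a cyclic rotation of colors along a path starting at one neighbor, exploiting the dihedral symmetries of $K^s_{10;3}$) removes the obstruction. This reduces matters to the cubic case. Now assume $(G,\sigma)$ is $3$-regular, pick a vertex $v$ with neighbors $u_1,u_2,u_3$, let $c_0$ be a coloring of $G-v$ given by minimality, and attempt to extend $c_0$ to $v$. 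The only obstructions are triples $(c_0(u_1),c_0(u_2),c_0(u_3))$ whose three arcs on $C_{10}$ have empty common intersection; up to switching (by Zaslavsky's theorem) and the symmetries of $K^s_{10;3}$, there is a short explicit list of such bad triples. For each, the plan is to adjust one of $c_0(u_1),c_0(u_2),c_0(u_3)$ by a signed Kempe-chain recoloring, or---when the neighborhood of $v$ carries a short cycle, a triangle, or an all-negative triangle---to reduce by contracting or identifying vertices, apply minimality to the smaller signed graph, and lift the coloring back.

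The hardest part will be the cubic, high-girth case, where no local structural reduction is available and the argument must be essentially global. Since the Petersen graph with an appropriate signature achieves the bound $\tfrac{10}{3}$ with girth $5$, the proof must be delicate enough to succeed for every non-extremal cubic signed graph while failing precisely for that one; this rules out overly local arguments and suggests exploiting signed even subgraphs, edge-cuts, or nowhere-zero flow-like tools on the underlying cubic graph. I expect the technical heart of the proof to consist of a careful case analysis of the sign patterns on the shortest cycles through $v$, combined with a recoloring scheme tailored to homomorphisms into signed $C_{10}$-like targets, and this is where the bulk of the work will sit.
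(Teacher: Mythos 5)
There is a genuine and fatal gap: your proposal never actually proves the theorem. You set up a minimum-counterexample framework, reduce (with some hand-waving) to the cubic case, and then for the ``cubic, high-girth case'' you explicitly concede that ``the argument must be essentially global,'' gesture at ``signed even subgraphs, edge-cuts, or nowhere-zero flow-like tools,'' and state that ``this is where the bulk of the work will sit.'' But that is the entire difficulty of the theorem; everything before it is routine preprocessing. A proof sketch that correctly identifies the hard case and then declines to address it is not a proof.

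Even the parts you do sketch are shaky. First, the proposed fix for the degree-2 obstruction via ``a short Kempe-type recoloring (a cyclic rotation of colors along a path...)'' is underspecified and almost certainly does not work as stated: in circular coloring into $K^s_{10;3}$, the colors along a path do not partition into two interchangeable classes, so there is no Kempe chain in the classical sense, and a naive ``rotation'' of colors along a path will generally break other edges off the path. Something genuinely new is needed here. Second, in the cubic case you propose deleting a vertex $v$, classifying ``bad triples'' on $N(v)$ up to symmetry, and repairing them by local recolorings or by contracting short cycles; but the obstruction is not a finite list of bad triples in isolation --- whether a repair is possible depends on the global structure of the signed graph, and you do not say how the repair would propagate or terminate. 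Third, your remark that the argument must ``fail precisely for'' the signed Petersen graph reveals a misconception: the signed Petersen graph \emph{does} admit a circular $10/3$-coloring (it attains the bound with equality), so any correct proof must succeed there; the only genuine exception is $(K_4,-)$.

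The paper takes a structurally different route that sidesteps these problems. Instead of directly extending a partial coloring of a subcubic graph, it proves a \emph{sparsity} statement about $10/3$-critical signed graphs: any such signed graph other than $\widetilde{K}_2$ and $(K_4,-)$ has at least $\frac{3n+1}{2}$ edges (Theorem~\ref{thm:10/3Critical}), which immediately forces a vertex of degree $\ge 4$ and thus implies Theorem~\ref{thm:BrooksDelat=3}. The sparsity bound is proved by the Kostochka--Yancey potential method: the potential $p(\widehat{H})=3|V(\widehat{H})|-2|E(\widehat{H})|$ is shown to be strictly positive on every proper subgraph of a minimum counterexample (Lemma~\ref{lem:key}), which lets one argue that identifying or contracting within a subgraph of small potential and recoloring via a large arsenal of list-coloring lemmas (Lemmas~\ref{lem:K2P3Coloring}--\ref{lem:NegativeTirnagle}) always succeeds. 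This is then combined with explicit forbidden configurations (no positive triangle, constraints around $(3,2)$-edges, cubic exclusion, etc.) and a discharging argument. The crucial advantage over your plan is that the potential method converts the ``global'' difficulty you identified into a local accounting argument about densities of subgraphs, which is exactly the kind of tool you would need to find to make your cubic high-girth case go through. I would recommend studying the potential method and, in particular, understanding why a straight delete-and-extend argument for circular coloring tends to fail and how the potential function repairs it.
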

	
	We note that, in analogy to the Brooks' theorem, negative cycles are the minimal signed graphs not admitting circular 2-coloring. Thus for signed graphs of maximum degree 2 ($p=2$), the negative girth, that is the length of smallest negative cycle, determines the circular chromatic number: $\chi_{c}(C_{-\ell})=2+\frac{2}{\ell-1}$, where $C_{-\ell}$ is a negative cycle of length $\ell$. If there is no negative cycle, then circular chromatic number is 2.
	
	\subsection{Critical signed graphs}
	
	A typical approach for proving \Cref{thm:BrooksDelat=3} is to work with a minimal counterexample. Observe that if a signed graph $(G, \sigma)$ does not admit a circular $r$-coloring, then it has a minimal subgraph which does not admit circular $r$-coloring. This leads to the notion of critical signed graphs with respect to circular coloring.
	
	\begin{definition}
		A signed graph $(G, \sigma)$ is said to be \emph{circular $r$-critical}, or simply $r$-critical, if it does not admits a circular $r$-coloring, but each of  its proper subgraphs admits a circular $r$-coloring.
	\end{definition}

	More generally, given signed graphs $(G, \sigma)$ and $(H, \pi)$, we say $(G, \sigma)$ is $(H, \pi)$-critical if $(G, \sigma)$ admits no homomorphism to $(H, \pi)$, but every proper subgraph of it does.  Then a $\frac{p}{q}$-critical signed graph is the same as a $\widehat{K}_{p;q}^s$-critical signed graph. 
	
	Since  $\widehat{K}_{p ; q}^s$ is vertex transitive, every $r$-critical signed graph is 2-connected. Furthermore, it contains no parallel edges of the same sign. \Cref{thm:BrooksDelat=3} is an immediate corollary of the following, which is a key contribution of this work.
	
	\begin{theorem}\label{thm:10/3Critical}
		If $(G,\sigma)$ is a 10/3-critical signed graph that is isomorphic neither to $\widetilde{K}_2$ nor to $(K_4,-)$, then $$|E(G)|\ge \frac{3|V(G)|+1}{2}.$$
	\end{theorem}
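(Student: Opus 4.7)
The plan is to show that a $10/3$-critical signed graph $(G,\sigma)$ with $(G,\sigma)\not\cong\widetilde{K}_2,(K_4,-)$ satisfies both
(a) $\delta(G)\geq 3$ and
(b) $G$ is not $3$-regular.
Together these imply $2|E(G)|\geq 3(|V(G)|-1)+4 = 3|V(G)|+1$, as desired. Note that a cubic graph on an odd number of vertices is impossible by parity, so the only obstruction to the bound once $\delta(G)\geq 3$ is the case of a cubic graph on an even number of vertices.

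\emph{Step (a).}
Let $v\in V(G)$ with $\deg v\leq 2$. By criticality, $G-v$ admits a sign-preserving homomorphism $c$ to $K^s_{10;3}$. In $K^s_{10;3}$ every vertex $i$ has $|N^+(i)|=|N^-(i)|=5$ (counting its positive loop), and $N^+(i)\cup N^-(i)=\mathbb{Z}_{10}$ disjointly. The cases $\deg v\leq 1$ extend $c$ immediately. For $\deg v=2$ with neighbors $u_1,u_2$, switching $v$ if needed we may assume both edges $vu_i$ are positive; the admissible colors for $v$ then form the intersection of two length-$5$ arcs on $\mathbb{Z}_{10}$, which is empty precisely when $c(u_2)\equiv c(u_1)+5\pmod{10}$, i.e., when $c(u_1),c(u_2)$ are antipodal in $K^s_{10;3}$. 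In this exceptional case I would argue either by re-coloring $G-v$ (switching a suitable subset of $V(G)\setminus\{v\}$ to break the antipodal relation) or, if no such re-coloring works, by replacing the path $u_1vu_2$ with a smaller obstructed gadget --- either route contradicting the minimality of $(G,\sigma)$.

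\emph{Step (b).}
Assume $G$ is cubic. For any $v$ with neighbors $u_1,u_2,u_3$ and any coloring $c$ of $G-v$, extension to $v$ fails exactly when $\bigcap_{i=1}^3 N^{\sigma(vu_i)}(c(u_i))=\emptyset$. Since each $N^{\pm}(c(u_i))$ is a length-$5$ arc of $\mathbb{Z}_{10}$, a direct analysis shows the triple intersection is empty only for a small, explicit family of ``bad'' arc-configurations for $(c(u_1),c(u_2),c(u_3))$ (for instance, whenever two of the arcs are antipodal, or their three complements together cover $\mathbb{Z}_{10}$). For each bad configuration I would produce a re-coloring of $G-v$, via switchings of small subsets of $V(G)\setminus\{v\}$ or Kempe-style swaps in $K^s_{10;3}$, that avoids the bad pattern; this should succeed unless the local and global structure of $G$ is so rigid that $(G,\sigma)\cong(K_4,-)$, contradicting our hypothesis.

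\textbf{Main obstacle.}
Step (b) is the crux of the argument: it is essentially a Brooks-type classification of cubic $10/3$-critical signed graphs. The delicate part is showing that, for every bad arc-triple, a valid re-coloring of $G-v$ exists --- this requires a careful management of the edge signs at $v$, of short cycles through $v$, and of the global switching class of $(G,\sigma)$, with the $(K_4,-)$ exception arising as the uniquely rigid case in which no re-coloring is possible.
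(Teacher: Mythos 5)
Your high-level plan---prove $\delta(G)\geq 3$ and rule out $3$-regularity, then conclude $2|E(G)|\geq 3(|V(G)|-1)+4$---has correct arithmetic, but both steps are unsupported and the first is very likely not even true, which makes the whole strategy unworkable as stated.

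\textbf{On step (a).} You are trying to prove a statement strictly stronger than the theorem: the density bound $|E(G)|\geq \frac{3|V(G)|+1}{2}$ does not by itself force $\delta(G)\geq 3$, and the paper never asserts that $10/3$-critical signed graphs outside the exceptions have minimum degree $3$. On the contrary, the discharging rule (R1) in the paper explicitly provides for $2$-vertices by transferring charge to them from their neighbors, and the final charge of a $2$-vertex is verified to be exactly $0$; this is the kind of accommodation one makes precisely when $2$-vertices cannot be excluded. Your handling of the ``antipodal'' obstruction is also incorrect as sketched: after switching $v$ so that $vu_1,vu_2$ are positive, the obstruction $c(u_1)=-c(u_2)$ cannot be ``broken by switching a suitable subset of $V(G)\setminus\{v\}$.'' Switching a set $S$ simultaneously flips edge signs across $(S,V\setminus S)$ and, to keep the same homomorphism to $K^s_{10;3}$, flips the color $c(x)\mapsto -c(x)$ for $x\in S$; this re-presents the same coloring of the same switching class, so the bad relation at $u_1,u_2$ is preserved in the switched form, not removed. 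To get rid of it you would need a genuinely new coloring of $\widehat{G}-v$, and criticality of $\widehat{G}$ gives you no control over which colorings of $\widehat{G}-v$ exist. Your alternative route (``a smaller obstructed gadget contradicting minimality'') is not an argument: the theorem is a density claim, so ``minimality'' of $\widehat{G}$ does not automatically give you information about edge-counted subgraphs; you need something like the potential function $p(\widehat{H})=3|V(\widehat{H})|-2|E(\widehat{H})|$ to make subgraph information useful, which is exactly what the paper introduces and what your plan omits.

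\textbf{On step (b).} You correctly identify that ruling out $3$-regular critical graphs is the crux, and the paper does contain a lemma to that effect (\Cref{lem:Cubic}). However, it is proved for a \emph{minimum counterexample to the potential bound}, and the proof there is long: it depends on \Cref{lem:key} (a delicate statement about the potential of all subgraphs of the minimal counterexample), on the classification of small critical graphs (\Cref{lem:Few_Vertex_Critical}), on the list-coloring lemmas (\Cref{lem:K2P3Coloring}, \Cref{lem:4-cycle}, \Cref{lem:K_{2,3}}, \Cref{lem:NegativeTirnagle}), and on several explicit colorings of six- and eight-vertex configurations. Your proposal to replace all of this by a case analysis of ``bad arc-triples'' plus ad hoc re-colorings or Kempe-style swaps is not a proof; Kempe chains do not behave well for circular colorings in the signed setting, and the paper never uses such swaps. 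More importantly, step (b) alone (which is what the paper actually establishes for the minimal counterexample, as a lemma toward the discharging conclusion) does not yield the theorem without either (a) or a discharging argument, so even if you filled in (b) you would still be stuck.

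\textbf{Summary.} There is a genuine missing idea: the proof requires tracking the density of subgraphs via the potential function of Kostochka and Yancey, together with a list of forbidden local configurations and a discharging step that \emph{allows} $2$-vertices and compensates for them. Your plan tries to bypass the potential method by directly bounding minimum degree, and that shortcut does not work.
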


	\begin{corollary}
		If $\widehat{G}$ is a subcubic signed simple graph, then $\chi_c(\widehat{G})\leq 10/3$.
	\end{corollary}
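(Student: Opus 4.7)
The plan is to deduce the corollary from \Cref{thm:10/3Critical} via a short edge-counting contradiction. Suppose, toward a contradiction, that $\chi_c(\widehat{G}) > 10/3$ for some subcubic signed simple graph $\widehat{G}$. Since any signed graph with no circular $10/3$-coloring must contain a minimal such subgraph, $\widehat{G}$ contains a subgraph $(G',\sigma')$ that is $10/3$-critical.

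My next step is to check that $(G',\sigma')$ is not one of the two configurations excluded from \Cref{thm:10/3Critical}. The case $(G',\sigma') \cong \widetilde{K}_2$ is ruled out at once: $\widetilde{K}_2$ carries a pair of parallel edges of opposite signs, while by hypothesis $\widehat{G}$ is signed \emph{simple}, so no subgraph of $\widehat{G}$ can be isomorphic to $\widetilde{K}_2$. The case $(G',\sigma') \cong (K_4,-)$ must be handled separately: as $K_4$ is $3$-regular and $\widehat{G}$ is subcubic, any such $(G',\sigma')$ would form an entire connected component of $\widehat{G}$, on which the circular chromatic number equals $4$; thus the corollary is to be read with the implicit proviso that no component of $\widehat{G}$ is switching-isomorphic to $(K_4,-)$, matching the hypothesis of \Cref{thm:BrooksDelat=3}.

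With both exceptional cases discarded, \Cref{thm:10/3Critical} applies to $(G',\sigma')$ and yields
\[
|E(G')| \;\ge\; \frac{3|V(G')|+1}{2}.
\]
On the other hand, since $G'$ is a subgraph of the subcubic graph $\widehat{G}$, the handshaking lemma forces $|E(G')| \le 3|V(G')|/2$. These two bounds are incompatible, which produces the desired contradiction and hence proves $\chi_c(\widehat{G}) \le 10/3$.

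The only step of substance is the appeal to \Cref{thm:10/3Critical}; once that density bound is in hand, the conclusion reduces to a one-line inequality comparison, so no real obstacle remains at this stage.
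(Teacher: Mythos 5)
Your proof is correct and is exactly the intended argument; the paper leaves the corollary without explicit proof precisely because it follows immediately from \Cref{thm:10/3Critical} by the edge-count you give: a $10/3$-critical subgraph $G'$ of a subcubic graph has $2|E(G')| \le 3|V(G')|$ by handshaking, contradicting $2|E(G')| \ge 3|V(G')|+1$. You also correctly flag that the corollary as stated in the paper omits the $(K_4,-)$ exclusion that appears in both the abstract and \Cref{thm:BrooksDelat=3}; since a $3$-regular $(K_4,-)$ inside a subcubic graph must be an entire component (with $\chi_c=4$), the proviso you supply is the right repair, and with it your argument is complete.
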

	
	We use the potential method of Kostochka and Yancey \cite{KY14, KY14JCTB} to prove \Cref{thm:10/3Critical}. Assume the potential function of signed graph $(G,\sigma)$ is $p(\widehat{G})=3|V(\widehat{G})|-2|E(\widehat{G})|$. 
	
	\section{The 10/3-circular coloring and homomorphism}
	
	Before proceeding to the main proof, we first examine certain properties of 10/3-coloring and 10/3-critical graphs, including colorings of specific substructures and partial colorings. These preliminary results will facilitate coloring extension in the subsequent inductive arguments.

	Assume $\widehat{G}$ is a $10/3$-colorable signed graph. In other words, there is an edge-sign preserving homomorphism from $\widehat{G}$ to $K_{10;3}^{s}$ (\Cref{fig:circular_clique}(a)), and there is a (switching) homomorphism from $\widehat{G}$ to $\widehat{K}_{10;3}^{s}$ (\Cref{fig:circular_clique}(b)). In our figures red (and solid) edges are negative edges and blue (and dashed) edges are positive.
	
	\begin{figure}[h!]
		\centering
		\begin{subfigure}[t]{0.45\textwidth}
			\centering
			\begin{tikzpicture}[rotate=90, scale=0.6]
				\def\n{10}
				\def\radius{3}
				\foreach \i in {1,...,\n} {
					\coordinate (P\i) at ({-360/\n * (\i - 1)}:\radius);
				}
				
				\foreach \i in {1,...,\n} {
					\pgfmathtruncatemacro\next{mod(\i, \n) + 1}
					\draw[blue, dash pattern=on .41mm off .41mm] (P\i) -- (P\next);
				}
				
				\foreach \i in {1,...,\n} {
					\pgfmathtruncatemacro\target{mod(\i+1, \n) + 1}
					\draw[blue, dash pattern=on .41mm off .41mm] (P\i) -- (P\target);
				}
				
				\foreach \i in {1,...,\n} {
					\pgfmathtruncatemacro\target{mod(\i+2, \n) + 1}
					\draw[red] (P\i) -- (P\target);
				}
				
				\foreach \i in {1,...,\n} {
					\pgfmathtruncatemacro\target{mod(\i+3, \n) + 1}
					\draw[red] (P\i) -- (P\target);
				}
				
				\foreach \i in {1,...,\n} {
					\pgfmathtruncatemacro\target{mod(\i+4, \n) + 1}
					\draw[red] (P\i) -- (P\target);
				}
				
				\foreach \i in {1,...,5} {
					\node[black] at ({-360/\n * (\i - 1)}:\radius + 0.5) {\i};
				}
				
				\foreach \i in {6,...,10} {
					\pgfmathtruncatemacro\label{-(\i-5)}
					\node[black] at ({-360/\n * (\i - 1)}:\radius + 0.5) {\label};
				}
				
				\foreach \i in {1,...,\n} {
					\pgfmathsetmacro\angle{-360/\n * (\i - 1)}
					\path let \p1 = (P\i) in
					coordinate (C1) at ($ (P\i) + (\angle+50:2.5) $)
					coordinate (C2) at ($ (P\i) + (\angle-50:2.5) $);
					\draw[blue, dash pattern=on .41mm off .41mm]
					(P\i) .. controls (C1) and (C2) .. (P\i);
				}
			\end{tikzpicture}
			\caption{$K_{10;3}^s$}
		\end{subfigure}
		\hfill
		\begin{subfigure}[t]{0.45\textwidth}
			\centering
			\begin{tikzpicture}[rotate=90,scale=0.6]
				\def\n{5}
				\def\radius{3}
				\foreach \i in {1,...,\n} {
					\coordinate (P\i) at ({-360/\n * (\i - 1)}:\radius);
				}
				\foreach \i in {1,...,\n} {
					\pgfmathtruncatemacro\next{mod(\i, \n) + 1}
					\draw[blue, dash pattern=on .41mm off .41mm] (P\i) -- (P\next);
				}
				\foreach \i in {1,...,\n} {
					\pgfmathtruncatemacro\target{mod(\i+2, \n) + 1}
					\draw[red] (P\i) -- (P\target);
				}
				
				
				\foreach \i in {1,...,5} {
					\node[black] at ({-360/\n * (\i + 4)}:\radius + 0.5) {
						\ifnum\i=1 1
						\else\ifnum\i=2 3
						\else\ifnum\i=3 5
						\else\ifnum\i=4 -2
						\else\ifnum\i=5 -4
						\fi\fi\fi\fi\fi
					};
				}

				\foreach \i in {1,...,\n} {
					\pgfmathsetmacro\angle{-360/\n * (\i - 1)}
					\path let \p1 = (P\i) in
					coordinate (C1) at ($ (P\i) + (\angle-50:2.5) $)
					coordinate (C2) at ($ (P\i) + (\angle+50:2.5) $);
					\draw[blue, dash pattern=on .41mm off .41mm]
					(P\i) .. controls (C1) and (C2) .. (P\i);
				}
			\end{tikzpicture}
			\caption{$\widehat{K}_{10;3}^s$}
		\end{subfigure}
		
		\caption{Circular clique structures}
		\label{fig:circular_clique}
	\end{figure}
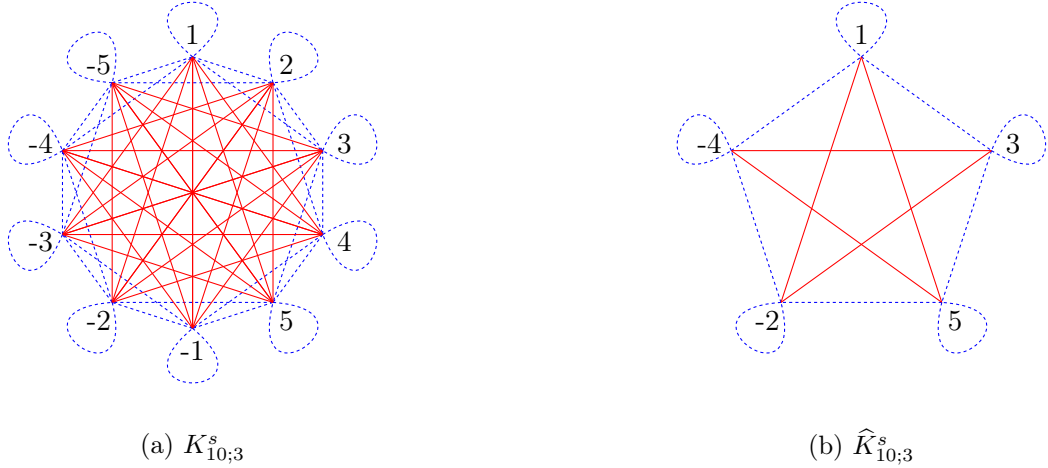

	In approach with potential technique we need the smallest subgraph of $\widehat{K}_{10;3}^s$ that forces the full structure of it. The subgraph  $\widehat{T}$ of \Cref{fig:TandT+} is identified as such in the sense of the following lemma. 	
	\begin{lemma}\label{lem:T}
		Every homomorphism of $\widehat{T}$ to $\widehat{K}^s_{10;3}$ is surjective.  
	\end{lemma}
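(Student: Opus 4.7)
The plan is to argue by contradiction: suppose $\varphi\colon \widehat{T} \to \widehat{K}_{10;3}^s$ is a homomorphism whose image omits some vertex $v$. Since a homomorphism of signed graphs amounts to an edge-sign preserving homomorphism after switching the source, we may assume $\varphi$ itself is edge-sign preserving; then $\varphi$ factors through the four-vertex signed subgraph $H := \widehat{K}_{10;3}^s - v$, and the task becomes to show that $\widehat{T}$ admits no edge-sign preserving homomorphism to any such $H$.

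The first step is a symmetry reduction. The cyclic rotation $i\mapsto i+1$ of $K^s_{10;3}$ descends, after an appropriate switching compensating for the identification $i\sim i+5$, to a switching-automorphism of $\widehat{K}^s_{10;3}$; these rotations, together with the switchings at individual vertices, give a group that acts transitively on the five vertices of $\widehat{K}^s_{10;3}$. Hence, up to relabelling and switching, the omitted vertex may be fixed once and for all (say $v = 0$), and we only need to rule out a single four-vertex signed target $H$.

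The core step is then to locate a small obstruction in $\widehat{T}$. I would enumerate the short signed cycles of $\widehat{T}$ directly from Figure~\ref{fig:TandT+} and compare them to those of $H$, using the fact that an edge-sign preserving homomorphism maps each cycle of $\widehat{T}$ to a closed walk of the same length and the same sign in $H$. In $H$ the short closed walks are severely restricted: it has only four vertices and is a proper subgraph of $\widehat{K}^s_{10;3}$, so either a short negative cycle (reflecting the pentagram of Figure~\ref{fig:circular_clique}(b)) or a pair of vertices joined by closed walks of two different signs of a prescribed length is missing. Exhibiting such a cycle or walk pattern in $\widehat{T}$ and observing that its sign/length signature cannot be realised in $H$ gives the desired contradiction.

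The main obstacle, and the reason the lemma is not a one-line consequence of vertex-counting, is exactly this cycle-bookkeeping: even after the symmetry reduction, several switching representatives of $H$ may need to be checked separately, and one must be careful that the obstruction is a genuinely switching-invariant feature of $\widehat{T}$ (i.e. read off from the signs of closed walks, not individual edge signs). Once the right obstruction cycle of $\widehat{T}$ is pinned down, ruling it out in $H$ is a short finite verification.
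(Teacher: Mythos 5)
Your high-level skeleton is sound (non-surjective means the image omits some vertex $v$, so $\varphi$ factors through $H:=\widehat{K}^s_{10;3}-v$, which by vertex-transitivity may be fixed), but the proposal stops precisely at the load-bearing step. You write that one should ``exhibit'' a cycle or walk pattern whose sign/length signature cannot be realised in $H$, and acknowledge that this ``cycle-bookkeeping'' is the whole difficulty; but no such obstruction is produced or verified. Moreover, the suggested form of the obstruction --- a single short signed cycle --- is unlikely to work as stated: $H$ is a signed $K_4$ with three positive and three negative edges (a positive Hamiltonian path with the remaining three edges negative, plus positive loops), so it contains triangles and $4$-cycles of both signs. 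A lone cycle sign of $\widehat{T}$ cannot distinguish it from $H$; the genuine obstruction is a combination of constraints, not a single-cycle invariant.

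The paper's argument attacks the problem from the other side, and is both shorter and complete. Since $|V(\widehat{T})|=|V(\widehat{K}^s_{10;3})|=5$, a non-surjective homomorphism must identify at least one pair of $\widehat{T}$-vertices. Of the $\binom{5}{2}=10$ pairs, five lie in a negative triangle (identification forces a negative loop) and three are diagonals of a negative $4$-cycle (identification forces a digon); neither a negative loop nor a digon maps to $\widehat{K}^s_{10;3}$. The only remaining pairs, $\{v_1,v_2\}$ and $\{v_1,v_5\}$, both collapse $\widehat{T}$ to a signed graph switching-equivalent to $(K_4,-)$, whose circular chromatic number is $4>10/3$. This is exactly the kind of ``walk pattern'' check you had in mind, but carried out concretely in the source graph rather than postulated in the target; you would need to supply an argument of comparable precision to close the gap.
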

	
	\begin{proof}
		Let $\phi$ be a mapping of $\widehat{T}$ to $\widehat{K}^s_{10;3}$. There are only two pairs of vertices each of whose identification does not create a loop or a digon: $\{v_1,v_2\}$, and $\{v_1,v_5\}$. All other pairs are either in a negative triangle or are the diagonal vertices of a negative 4-cycle. However, identification one of these two pairs would result in a subgraph switching equivalent to $(K_4,-)$ whose circular chromatic number is 4, and hence does not map to $\widehat{K}^s_{10;3}$. 
	\end{proof}
	
	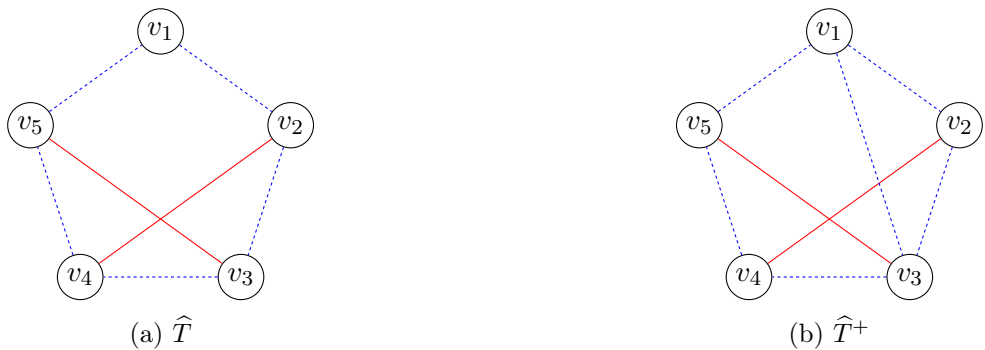
\begin{figure}[h!]
		\centering
		\begin{subfigure}[t]{0.45\textwidth}
			\centering
			\begin{tikzpicture}[rotate=90,scale=0.6, baseline=(current bounding box.center),
  node style/.style={circle,draw,inner sep=0.3pt, minimum size=6mm}, 
  dashed edge/.style={blue, dashed, thick},
  edge/.style={red, thick}
  ]
    \def\n{5}
    \def\radius{3}
    
    \foreach \i in {1,...,\n} {
      \node[node style] (P\i) at ({-360/\n * (\i - 1)}:\radius) {$v_{\i}$};
    }

    \foreach \i in {1,...,\n} {
      \pgfmathtruncatemacro\next{mod(\i, \n) + 1}
      \draw[blue, dash pattern=on .41mm off .41mm] (P\i) -- (P\next);
    }

    \draw[red] (P4) -- (P2);
    \draw[red] (P3) -- (P5);

\end{tikzpicture}
			\caption{$\widehat{T}$}
		\end{subfigure}
		\hfill
		\begin{subfigure}[t]{0.45\textwidth}
			\centering
		\begin{tikzpicture}[rotate=90,scale=0.6, baseline=(current bounding box.center),
  node style/.style={circle,draw,inner sep=0.3pt, minimum size=6mm}, 
  dashed edge/.style={blue, dashed, thick},
  edge/.style={red, thick}
  ]
    \def\n{5}
    \def\radius{3}
    
    \foreach \i in {1,...,\n} {
      \node[node style] (P\i) at ({-360/\n * (\i - 1)}:\radius) {$v_{\i}$};
    }

    \foreach \i in {1,...,\n} {
      \pgfmathtruncatemacro\next{mod(\i, \n) + 1}
      \draw[blue, dash pattern=on .41mm off .41mm] (P\i) -- (P\next);
    }

    \draw[red] (P4) -- (P2);
    \draw[red] (P3) -- (P5);
   \draw[blue,dash pattern=on .41mm off .41mm] (P1) -- (P3);

\end{tikzpicture}			
			\caption{$\widehat{T}^+$}
		\end{subfigure}
		
		\caption{The signed graphs $\widehat{T}$ and $\widehat{T}^+$}
		\label{fig:TandT+}
	\end{figure}

	Next we show that $\widehat{T}$ is minimal in the following sense.
	
	\begin{lemma}\label{lem:3-colorable}
		Assume $\widehat{G}$ is a signed simple graph such that $|V(\widehat{G})|\leq 5$,  $|E(\widehat{G})|\leq 7$, $\widehat{G}\ncong\widehat{T}$, and $\widehat{G}$ contains no subgraph isomorphic to $(K_4,-)$. Then $\widehat{G}$ is circular 3-colorable. 
	\end{lemma}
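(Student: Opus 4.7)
The plan is first to translate circular $3$-colorability into a cycle-sign parity condition, then to proceed by case analysis on $n:=|V(\widehat G)|$. The signed graph $\widehat K^s_{6;2}$ has three vertices $\{0,1,2\}$ with a positive loop at each, positive edges $01$ and $12$, and a single negative edge $02$; since every pair of colors is joined by some edge, a map $c:V(G)\to\{0,1,2\}$ is always compatible at the adjacency level, and by the switching freedom it extends to a valid homomorphism if and only if, for every cycle $C$ of $G$, $\mathrm{sign}_\sigma(C)=(-1)^{k(C)}$, where $k(C)$ is the number of $0\leftrightarrow 2$ transitions along $c(C)$. This converts the problem into a parity condition on cycle signs.

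This criterion yields two reductions. \textbf{(R1)} A vertex of degree at most one extends any coloring of $\widehat G-v$: assign it the same color as its unique neighbor, switching $v$ if the incident edge is negative, so that no new cycle is introduced. \textbf{(R2)} If two non-adjacent vertices $u,v$ can be identified without creating a digon, a negative loop, or a $(K_4,-)$-subgraph, then the resulting smaller signed graph still satisfies the hypothesis, and an induction on $|V|$ pulls back a $3$-coloring.

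For $n\le 3$ the claim is immediate (at most one cycle, easily matched). For $n=4$, \textbf{(R1)} reduces to $\delta\ge 2$, leaving underlying graphs $C_4$, $K_4-e$, and $K_4$: a signed $C_4$ has $\chi_c\le 8/3$, and for the other two the hypothesis $\widehat G\not\cong(K_4,-)$ together with Zaslavsky's theorem furnishes a positive triangle whose sign can be matched by a short triangle-sign assignment. For $n=5$ and $|E|\le 7$, \textbf{(R1)} leaves $\delta\ge 2$ and hence $|E|\in\{5,6,7\}$: the signed $5$-cycle ($|E|=5$) has $\chi_c\le 5/2$; for $|E|=6$ the underlying graph is one of the bowtie, $\overline{P_5}$, or $K_{2,3}$, none of which contains $K_4$, and in each case \textbf{(R2)} finds an admissible identification; for $|E|=7$ the four possibilities are the complements of $K_3+2K_1$, $P_3+K_2$, $P_4+K_1$, and $K_{1,3}+K_1$. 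Only the last contains $K_4$ (and also has a degree-one vertex, so \textbf{(R1)} reduces it to the $n=4$ case), while only the second shares its underlying graph with $\widehat T$; the remaining two are handled by \textbf{(R2)}.

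The principal obstacle is the final sub-case, where the underlying graph is that of $\widehat T$ (the $5$-cycle $v_1v_2v_3v_4v_5$ with the two disjoint chords $v_2v_4$ and $v_3v_5$) but the signing is not $\widehat T$'s. Here I would invoke the cycle-sign criterion directly: the signature of $\widehat T$ is precisely the one in which both triangles $v_2v_3v_4$ and $v_3v_4v_5$ are negative — forcing $\{c(v_2),c(v_3),c(v_4)\}=\{c(v_3),c(v_4),c(v_5)\}=\{0,1,2\}$ and hence $c(v_2)=c(v_5)$ — while simultaneously the $4$-cycle $v_1v_2v_4v_5$ is negative, which is incompatible with $c(v_2)=c(v_5)$ since then its image has even parity. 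Any other signing on this underlying graph flips at least one of these three cycle-signs; a brief case analysis on which sign has flipped produces an explicit $3$-coloring, confirming that $\widehat T$ is the unique exception in the statement.
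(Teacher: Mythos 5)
Your approach is genuinely different from the paper's. The paper runs a minimal-counterexample argument, contracting a positive edge not in a negative triangle (or identifying a non-adjacent pair not in a negative 4-cycle) and forcing the contraction to be $(K_4,-)$ by minimality, which in turn forces $\widehat{G}\cong\widehat{T}$; the structure is squeezed out of the minimality rather than enumerated. You instead translate circular 3-colorability into the Zaslavsky-style cycle-parity criterion for a map $c:V\to\{0,1,2\}$ into $\widehat K^s_{6;2}$, and then enumerate the underlying graphs with $n\le5$, $|E|\le 7$, $\delta\ge 2$. The translation is correct (the target is complete with positive loops, so every map is adjacency-compatible, and the switching-class criterion reduces to equality of cycle signs), and the organizational idea — reductions (R1), (R2), plus a parity case-split on the $\widehat T$-shaped case — is sound in principle.

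However, there are concrete gaps. First, for $n=4$ and underlying graph $K_4-e$ you claim the hypothesis ``$\widehat G\not\cong (K_4,-)$ together with Zaslavsky's theorem furnishes a positive triangle.'' This is false: $K_4-e$ has five edges so the hypothesis $\widehat G\ncong(K_4,-)$ is vacuous, and a signed $K_4-e$ can perfectly well have both of its triangles negative — in which case there is no positive triangle. (The conclusion still holds, e.g.\ by your own (R2): both triangles negative forces the $4$-cycle positive, so identifying the non-adjacent pair is digon-free and yields a $3$-colorable triangle — but the argument you wrote does not cover this case.) Second, (R2) is phrased as a reduction available whenever there is an identification that avoids digons, negative loops, and $(K_4,-)$, but you never verify that such an identification exists in the cases where you invoke it. This is not automatic: for $\overline{P_4+K_1}$ with the all-negative signature, identifying the unique non-adjacent pair that has only one common neighbor produces exactly $(K_4,-)$. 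One has to show that in any such situation an alternative identification (here, a pair with two common neighbors, which requires the relevant $4$-cycle to be positive) is available — a short computation, but one that must be done. As written, the plan asserts admissibility rather than proving it, which is where the real content of the lemma lives.
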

	
	\begin{proof}
		Let $\widehat{G}$ be a counterexample  to the claim having the minimum possible number of vertices.  We claim that every edge of $G$ is contained in a negative triangle. Suppose not, let $e$ be an edge which is in no negative triangle. By switching at one end of it, if needed, we may assume $e$ is a positive edge. Let $\widehat{G}'$ be the signed graph obtained from identifying the two ends of $e$ and removing positive loop and parallel edges of the same sign. Then $\widehat{G}'$ is not $10/3$-colorable either. By the minimality of $\widehat{G}$, $\widehat{G}'$ must be isomorphic to $(K_{4},-)$. But then $\widehat{G}\cong \widehat{T}$. Similarly, each pair of nonadjacent vertices must be in a negative 4-cycle.  Suppose not, let $u$ and $v$ be two nonadjacent vertices which are in no negative 4-cycle. By switching, we may assume that all edges from $u$ and $v$ to any common neighbor have the same sign. Let $\widehat{G}'$ be the signed graph obtained from identifying $v$, $u$  and removing parallel edges of the same sign. Then $\widehat{G}' \cong (K_{4},-)$. Note that every edge should be contained in a negative triangle. So $\widehat{G}$ consists of three consecutive negative triangles, each sharing an edge with the next  via a distinct shared edge. But then $\widehat{G}$ is 3-colorable.

		These condition implies that if $\widehat{G}$ has at most four vertices, then it is either $(K_3, -)$ which is 3-colorable or $(K_4,-)$ which is not supposed to be. 
		
		Thus we assume $V(G)=5$. Since $E(G)\leq 7$, and that $\delta(G)\geq 2$, there is a vertex $u$ of degree 2. Let $x$ and $y$ be the two neighbors of $u$. If $x\nsim y$, assuming $ux$ is a positive edge, by contracting it we get a graph on four vertices which is not 3-colorable. That is to say $\widehat{G}$ is a subdivision of $(K_4,-)$, i.e., $\widehat{G} \cong \widehat{T}$. Otherwise $xy\in E(G)$, but $u$, not being adjacent to the other two vertices $z$ and $t$, must be on a negative 4-cycle with each of them. That is to say each of $z$ and $t$ is adjacent to both $x$ and $y$. But then, noting that $|E(G)|\leq 7$, all edges are incident to either $x$ or $y$. Thus two among $u, z, t$, each together with $xy$ form a triangle of the same sign. These two vertices, after a switching if needed, can be identified to get a smaller homomorphic image of $\widehat{G}$.     
	\end{proof}

	We have observed so far that $\widehat{T}$, in some sense, uniquely maps to $\widehat{K}^s_{10;3}$, that is an embedding of it as a subgraph of $\widehat{K}^s_{10;3}$. This can be used to build $10/3$-critical signed graphs. Viewing  $\widehat{T}$ as a subgraph of $\widehat{K}^s_{10;3}$, if we add one of the missing edges with an opposite sign, then we will have a signed graph which is not $10/3$-colorable. Depending on which edge is added, the critical subgraph then is either switching equivalent to $(K_4,-)$, to the signed graph $\widehat{T}^+$ of \Cref{fig:TandT+}(b), or to the digon $\widetilde{K}_2$.

	\begin{lemma}\label{lem:Few_Vertex_Critical}
		Up to switching equivalence, the only $10/3$-critical signed graphs on at most five vertices are $(K_4,-)$ and $\widehat{T}^+$ and $\widetilde{K}_2$. 
	\end{lemma}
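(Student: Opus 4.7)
The plan is to reason by cases on $n := |V(\widehat{G})|$. Two structural consequences of $10/3$-criticality will be used throughout: $\widehat{G}$ is $2$-connected, and $\widehat{G}$ contains neither a digon (else $\widetilde{K}_2$ is a proper non-colorable subgraph) nor $(K_4,-)$ as a \emph{proper} subgraph. Thus $n\le 1$ is vacuous; $n=2$ forces $\widehat{G}\cong \widetilde{K}_2$ by $2$-connectivity and the absence of parallel edges of the same sign; $n=3$ reduces to a signed triangle, which is $3$-colorable and hence $10/3$-colorable; and for $n=4$, Lemma~\ref{lem:3-colorable} applies (since $|E|\le 6\le 7$ and $\widehat{T}$ has five vertices), giving that $\widehat{G}$ is $3$-colorable unless it contains $(K_4,-)$, which on four vertices forces $\widehat{G}\cong (K_4,-)$.

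The core case is $n=5$. Lemma~\ref{lem:3-colorable} applied to $\widehat{G}$ itself (which is not $3$-colorable, is not $\widehat{T}$ as $\widehat{T}$ is $10/3$-colorable, and contains no $(K_4,-)$-subgraph) yields $|E(\widehat{G})|\ge 8$. For any edge $e$, $\widehat{G}-e$ is $10/3$-colorable, and if $|E(\widehat{G})|=8$, Lemma~\ref{lem:3-colorable} applied to $\widehat{G}-e$ forces either $\widehat{G}-e$ to be $3$-colorable or $\widehat{G}-e\cong \widehat{T}$. The plan is to show that the second alternative must occur for some choice of $e$. Once it does, Lemma~\ref{lem:T} guarantees that the embedding of this copy of $\widehat{T}$ into $\widehat{K}^s_{10;3}$ is a vertex bijection, under which the three non-edges of $\widehat{T}$ correspond to negative edges of $\widehat{K}^s_{10;3}$. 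Hence $e$ must carry the opposite sign on one of these three positions; as discussed in the paragraph preceding the lemma, the three resulting signed graphs are $\widehat{T}^+$ itself, a graph containing a sub-signed-graph switching-equivalent to $(K_4,-)$, and a graph whose minimal obstruction is switching-equivalent to $\widetilde{K}_2$. Criticality of $\widehat{G}$ excludes the last two, leaving $\widehat{G}\cong \widehat{T}^+$.

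Two auxiliary arguments close the gaps. First, if every $\widehat{G}-e$ is $3$-colorable (so that the $\widehat{T}$-alternative never arises), I would derive a contradiction by taking a $3$-coloring of $\widehat{G}-e$ and perturbing the image of one endpoint of $e$ within $\widehat{K}^s_{10;3}$, exploiting the richer $10/3$-palette beyond the $3$-coloring image, to extend the coloring across $e$. Second, the case $|E(\widehat{G})|\ge 9$ must be excluded: since the underlying graph is then $K_5$ or $K_5-e$, one can locate two vertices with the same signed neighborhood after a suitable switching; contracting this pair produces a four-vertex quotient $\widehat{G}'$ which, by the $n=4$ analysis (and since no $(K_4,-)$-subgraph of $\widehat{G}'$ could lift to $\widehat{G}$), is $10/3$-colorable, and this coloring pulls back to $\widehat{G}$, contradicting criticality. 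The main obstacle will be these two extension/lifting steps, each requiring a delicate case analysis on the signature and a careful understanding of the automorphism action on $\widehat{K}^s_{10;3}$.
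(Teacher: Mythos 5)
Your $n \le 4$ analysis, the reduction to $|E|\ge 8$ via Lemma~\ref{lem:3-colorable}, and the endgame once you know $\widehat{G}-e\cong\widehat{T}$ for some $e$ are all sound and match the paper in spirit. The problems are in the two ``auxiliary arguments'' you flag, and they are genuine gaps rather than routine details.

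First, the fallback for $|E|=8$: if $\widehat{G}-e$ were $3$-colorable for every $e$, you propose to ``perturb the image of one endpoint of $e$'' to upgrade a $3$-coloring of $\widehat{G}-e$ to a $10/3$-coloring of $\widehat{G}$. This is not a proof, and it is not clear it can be made one: the endpoint you want to move is constrained by up to three other neighbors already placed on a $3$-element negative triangle of $K^s_{10;3}$, and there is no general principle that a legal perturbation exists. Indeed $\widehat{T}^+$ itself has at least one edge $e$ (e.g.\ the edge $v_1v_2$) for which $\widehat{T}^+-e$ has degree sequence $(4,3,3,2,2)\ne(3,3,3,3,2)$, hence $\widehat{T}^+-e\ncong\widehat{T}$ and so $\widehat{T}^+-e$ is $3$-colorable by Lemma~\ref{lem:3-colorable}, yet $\widehat{T}^+$ is not $10/3$-colorable; so local perturbation cannot always succeed, and you would genuinely need to locate the ``good'' edge. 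The paper avoids this entirely: instead of deleting an arbitrary edge, it exhibits a copy of $\widehat{T}$ directly by a structural dichotomy — either some edge lies in no negative triangle (then contract it and apply the $n=4$ analysis to obtain $(K_4,-)$, whose preimage is $\widehat{T}$), or every edge lies in a negative triangle (then a counting argument on $5$ vertices with $\ge 8$ edges forces two negative triangles sharing an edge, and that forces a $\widehat{T}$).

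Second, the $|E|\ge 9$ step claims one can ``locate two vertices with the same signed neighborhood after a suitable switching'' and contract them. Whether two adjacent vertices $u,v$ agree in sign to every common neighbor $w$ is a switching invariant (it is equivalent to saying every triangle through $uv$ has the same sign as $uv$), so ``a suitable switching'' does not help; what you actually need is an edge contained in no negative triangle, and it is not obvious that such an edge exists in a signed $K_5$ or $K_5-e$ with no $(K_4,-)$-subgraph. Moreover, even if the $4$-vertex quotient $\widehat{G}'$ exists, your parenthetical ``since no $(K_4,-)$-subgraph of $\widehat{G}'$ could lift'' is unjustified: $\widehat{G}'\cong(K_4,-)$ is perfectly consistent with $\widehat{G}$ containing no $(K_4,-)$ (the preimage is typically a $\widehat{T}$, not a $(K_4,-)$), so you cannot conclude $\widehat{G}'$ is $10/3$-colorable. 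The paper sidesteps this by not splitting on $|E|$ at all: its two cases cover every $|E|\ge 8$ uniformly, and the contraction / identification steps are only performed when the ``no negative triangle'' or ``no negative $4$-cycle'' hypothesis guarantees they are legal.
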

	
	\begin{proof}
		
		Let $\widehat{G}$ be a $10/3$-critical signed graph on at most 5 vertices. If $\widehat{G}$ is not a simple graph, then $\widehat{G}$ should be $\widetilde{K}_2$. Assume $\widehat{G}$ is simple. It follows from \Cref{lem:3-colorable}  that $\widehat{G}$ has at least four vertices and that if exactly four vertices, then it is switching equivalent to $(K_4,-)$. So we assume $|V(G)|=5$ and consider two cases: 
		
		{\bf Case 1} An edge $e$ of $\widehat{G}$ does not belong to a negative triangle. In this case we may switch one end of $e$, if needed, to ensure that $e$ is a positive edge. Then by contracting $e$ we have a homomorphic image which is a simple signed graph on four vertices and not $10/3$-colorable. Thus the image is switching equivalent to $(K_4,-)$, and hence $\widehat{G}$ contains a switching of $\widehat{T}$ as a subgraph.  We have then three possible edges to add to this subgraph $\widehat{T}$, namely: $v_{1}v_{3}$, $v_{1}v_{4}$ and $v_{2}v_{5}$. Adding $v_{2}v_{5}$ with a positive sign would result in a switching copy of $(K_4,-)$. Adding either of $v_{1}v_{3}$ or $v_{1}v_{4}$ with a positive sign would imply copies of $\widehat{T}^+$. Thus adding one of the three with a  positive sign is enough to reach $10/3$-critical subgraph. If none of them are added with a positive sign, then we will have a subgraph of $\widehat{K}^s_{10;3}$ and hence they are all $10/3$-colorable. 
		
		{\bf Case 2} Every edge of $\widehat{G}$ is in a negative triangle. Since we have at least eight edges and at most five vertices, at least two negative triangles share an edge $xy$. By switching, if needed, we may assume $xy$ is negative and that the other edges of the two triangles are positive. Let $u$ and $v$ be the other two vertices of the two triangles. If they are not adjacent, then they must be in a negative 4-cycle. This can only be through the last vertex in which case we must have a switching copy of $\widehat{T}$. The rest of the argument is the same as the previous case. If $uv$ is a negative edge, then the four vertices $x$, $y$, $u$, and $v$ induce a switching equivalent copy of $(K_4, -)$. If $uv$ is a positive edge, then it can only form a negative triangle with the fifth vertex in which case again we find a copy of $\widehat{T}$ as a subgraph of $\widehat{G}$ and the argument of the previous case applies.  	
	\end{proof}

	\begin{figure}[h!]
		\centering
		\begin{minipage}[t]{0.32\textwidth}
			\centering
			\begin{tikzpicture}[scale=0.8, baseline=(current bounding box.center),
    node style/.style={
        circle, draw,
        inner sep=0pt,
        minimum size=5mm,
        text height=1.2ex, text depth=.2ex 
    },
    dashed edge/.style={blue, thick, dash pattern=on .41mm off .41mm},
    edge/.style={red, thick}
]

				\node[node style] (w') at (2,3) { $w'$};
				\node[right=-1pt of w'] {\footnotesize -4};
				\node[node style] (v) at (3,1.5) {$v$};
				\node[right=-1pt of v] {\footnotesize 2};
				\node[node style] (u') at (4,0) {$u'$};
				\node[right=-1pt of u'] {\footnotesize -5};
				\node[node style] (w) at (2,0) {$w$};
				\node[below=-1pt of w] {\footnotesize 3};
				\node[node style] (v') at (0,0) { $v'$};
				\node[left=-1pt of v'] {\footnotesize 4};
				\node[node style] (u) at (1,1.5) {$u$};
				\node[left=-1pt of u] {\footnotesize 1};
				
				\draw[edge] (w') -- (v);
				\draw[edge] (u') -- (w);
				\draw[edge] (v') -- (u);
				
				\draw[dashed edge] (w') -- (u);
				\draw[dashed edge] (v) -- (u);
				\draw[dashed edge] (v) -- (w);
				\draw[dashed edge] (w) -- (u);
				\draw[dashed edge] (u') -- (v);
				\draw[dashed edge] (v') -- (w);
			\end{tikzpicture}
			\caption*{$\widehat{H}_1$}
		\end{minipage}
		\hfill
		\begin{minipage}[t]{0.32\textwidth}
			\centering

			\begin{tikzpicture}[scale=0.8, baseline=(current bounding box.center),
    node style/.style={
        circle, draw,
        inner sep=0pt,
        minimum size=5mm,
        text height=1.2ex, text depth=.2ex 
    },
    dashed edge/.style={blue, thick, dash pattern=on .41mm off .41mm},
    edge/.style={red, thick}
]
					
				\node[node style] (w') at (2,3) {$w^{\prime}$};
				\node[right=-1pt of w'] {\footnotesize 5}; 
				\node[node style] (v') at (0,0) {$v'$};
				\node[left=-1pt of v'] {\footnotesize -3}; 
				\node[node style] (u') at (4,0) {$u'$};
				\node[right=-1pt of u'] {\footnotesize 1}; 
				\node[node style] (w) at (2, 2) {$w$};
				\node[below=-1pt of w] {\footnotesize 1};
				\node[node style] (v) at (1,0.5) {$v$};
				\node[left=-1pt of v] {\footnotesize 4};
				\node[node style] (u) at (3,0.5) {$u$};
				\node[right=-1pt of u] {\footnotesize -2};
				
				\draw[edge] (u) -- (v)--(w)--(u);
				\draw[edge] (w') -- (u')--(v')--(w');
				\draw[edge] (w') -- (w);
				\draw[edge] (v) -- (v');
				\draw[edge] (u') -- (u);
				
			\end{tikzpicture}
			\caption*{$\widehat{H}_2$}
		\end{minipage}
		\hfill
		\begin{minipage}[t]{0.32\textwidth}
			\centering
			\begin{tikzpicture}[scale=0.8, baseline=(current bounding box.center),
    node style/.style={
        circle, draw,
        inner sep=0pt,
        minimum size=5mm,
        text height=1.2ex, text depth=.2ex 
    },
    dashed edge/.style={blue, thick, dash pattern=on .41mm off .41mm},
    edge/.style={red, thick}
]
				
				\node[node style] (w') at (2,3) { $w'$};
				\node[right=-1pt of w'] {\footnotesize -3};
				\node[node style] (v') at (0,0) { $v'$};
				\node[left=-1pt of v'] {\footnotesize -2}; 
				\node[node style] (u') at (4,0) { $u'$};
				\node[right=-1pt of u'] {\footnotesize -5}; 
				\node[node style] (w) at (2, 2) {$w$};
				\node[below=-1pt of w] {\footnotesize 1};
				\node[node style] (v) at (1,0.5) {$v$};
				\node[left=-1pt of v] {\footnotesize 4};
				\node[node style] (u) at (3,0.5) {$u$};
				\node[right=-1pt of u] {\footnotesize -2};
				
				\draw[edge] (u) -- (v)--(w)--(u);
				\draw[edge] (u')--(v');
				\draw[edge] (w') -- (w); 
				\draw[dashed edge] (v')--(w')--(u');
				\draw[edge] (v) -- (v');
				\draw[edge] (u') -- (u);
				
			\end{tikzpicture}        
			\caption*{$\widehat{H'}_2$}
		\end{minipage}
		\\
		\vspace{5mm}
		\begin{minipage}[t]{0.32\textwidth}
			\centering
			\begin{tikzpicture}[scale=0.8, baseline=(current bounding box.center),
    node style/.style={
        circle, draw,
        inner sep=0pt,
        minimum size=5mm,
        text height=1.2ex, text depth=.2ex 
    },
    dashed edge/.style={blue, thick, dash pattern=on .41mm off .41mm},
    edge/.style={red, thick}
]
				
				\node[node style] (w) at (5, 2.5) {$w$};
				\node[right=-1pt of w] {\footnotesize -5};
				\node[node style] (x) at (1.6,0.75) {};
				\node[left=-1pt of x] {\footnotesize -2};
				\node[node style] (u) at (3.1,3) {$u$};
				\node[right=-1pt of u] {\footnotesize -5};
				\node[node style] (y) at (3.1,1.5){};
				\node[above=-1pt of y] {\footnotesize 1};
				\node[node style] (z) at (3.1,0) {};
				\node[below=-1pt of z] {\footnotesize 4};
				\node[node style] (v) at (4.6,0.75) {$v$};
				\node[right=-1pt of v] {\footnotesize -3};
				
				\draw[edge] (u) -- (x)--(y)--(z)--(v)--(y);
				\draw[edge] (x) -- (z);
				
				\draw[dashed edge] (w) -- (u);
				\draw[dashed edge] (w) -- (v);
				\draw[dashed edge] (u) -- (v);
			\end{tikzpicture}
			\caption*{$\widehat{H}_3$}
		\end{minipage}
		\hfill
		\begin{minipage}[t]{0.32\textwidth}
			\centering
			\begin{tikzpicture}[scale=0.8, baseline=(current bounding box.center),
    node style/.style={
        circle, draw,
        inner sep=0pt,
        minimum size=5mm,
        text height=1.2ex, text depth=.2ex 
    },
    dashed edge/.style={blue, thick, dash pattern=on .41mm off .41mm},
    edge/.style={red, thick}
]
				\node[node style] (v) at (0.1, 0.75) {$v$};
				\node[below=-1pt of v] {\footnotesize 2};
				\node[node style] (x) at (1.6,0.75) {$x$};
				\node[left=-1pt of x] {\footnotesize -2};
				\node[node style] (u) at (3.1,3) {$u$};
				\node[right=-1pt of u] {\footnotesize -5};
				\node[node style] (w2) at (3.1,1.5) {$w_2$};
				\node[above=-1pt of w2] {\footnotesize 1};
				\node[node style] (z) at (3.1,0) {$z$};
				\node[below=-1pt of z] {\footnotesize 4};
				\node[node style] (y) at (4.6,0.75) {$y$};
				\node[below=-1pt of y] {\footnotesize -3};
				
				\draw[edge] (u) -- (x)--(w2)--(z)--(y)--(w2);
				\draw[edge] (x) -- (z);
				
				\draw[dashed edge] (v) -- (u);
				\draw[dashed edge] (v) -- (z);
				\draw[dashed edge] (u) -- (y);
			\end{tikzpicture}
			\caption*{$\widehat{H}_4$}
		\end{minipage}
		\hfill
		\begin{minipage}[t]{0.32\textwidth}
			\centering
			\begin{tikzpicture}[scale=0.8, baseline=(current bounding box.center),
    node style/.style={
        circle, draw,
        inner sep=0pt,
        minimum size=5mm,
        text height=1.2ex, text depth=.2ex 
    },
    dashed edge/.style={blue, thick, dash pattern=on .41mm off .41mm},
    edge/.style={red, thick}
]				\node[node style] (v) at (0.1, 0.75) {$v$};
				\node[below=-1pt of v] {\footnotesize -1};
				\node[node style] (x) at (1.6,0.75) {$x$};
				\node[left=-1pt of x] {\footnotesize -2};
				\node[node style] (u) at (3.1,3) {$u$};
				\node[right=-1pt of u] {\footnotesize -5};
				\node[node style] (w2) at (3.1,1.5) {$w_2$};
				\node[above=-1pt of w2] {\footnotesize 1};
				\node[node style] (z) at (3.1,0) {$z$};
				\node[below=-1pt of z] {\footnotesize 4};
				\node[node style] (y) at (4.6,0.75) {$y$};
				\node[below=-1pt of y] {\footnotesize -3};
				
				\draw[edge] (u) -- (x)--(w2)--(z)--(y)--(w2);
				\draw[edge] (x) -- (z);
				
				\draw[edge] (v) -- (u);
				\draw[dashed edge] (v) -- (z);
				\draw[dashed edge] (u) -- (y);
			\end{tikzpicture}
			\caption*{$\widehat{H}'_4$}
		\end{minipage}
		\caption{10/3-colorable graphs on six vertices}
		\label{fig:10/3-colorable graphs}
	\end{figure}
	\begin{lemma} \label{lem:10/3-colorable_graphs}
		Each of the signed graphs, $\widehat{H}_1, \widehat{H}_2, \widehat{H}'_2, \widehat{H}_3, \widehat{H}_4, \widehat{H}'_4$, depicted in \Cref{fig:10/3-colorable graphs}, admits a 10/3-coloring.
	\end{lemma}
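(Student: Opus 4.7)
The plan is to verify directly, for each of the six signed graphs, that the vertex labels displayed in \Cref{fig:10/3-colorable graphs} already constitute a valid $10/3$-coloring, i.e.\ an edge-sign preserving homomorphism to $K^s_{10;3}$. Recalling the definition of $K^s_{10;3}$ and the convention $-i \equiv i+5 \pmod{10}$ mentioned after Definition~1, an edge $xy$ with endpoints mapped to positions $a,b \in \mathbb{Z}_{10}$ is realized in $K^s_{10;3}$ as a negative edge precisely when the circular distance $d(a,b)$ lies in $\{3,4,5\}$, and as a positive edge precisely when $d(a,b) \in \{0,1,2\}$. Since the labels in the figure take values in $\{\pm 1, \pm 2, \pm 3, \pm 4, \pm 5\}$, each can be translated into $\mathbb{Z}_{10}$, after which the verification becomes purely arithmetic.

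For each of $\widehat{H}_1, \widehat{H}_2, \widehat{H}'_2, \widehat{H}_3, \widehat{H}_4, \widehat{H}'_4$, I first translate every vertex label into its $\mathbb{Z}_{10}$ representative, then run through the edge list, computing $d(a,b)$ for each pair and checking the required set membership against the edge sign. As a representative illustration, in $\widehat{H}_1$ the labels translate to $w'=9,\ v=2,\ u'=0,\ w=3,\ v'=4,\ u=1$; the negative edges $w'v$, $u'w$, $v'u$ all have circular distance $3$, while the six positive edges $w'u, vu, vw, wu, u'v, v'w$ have circular distances in $\{1,2\}$, so the claimed coloring is valid. The same routine applied to the remaining five graphs completes the proof.

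The only real obstacle is bookkeeping: six graphs, each with up to nine edges, must be checked individually. No clever argument is needed, since the lemma asserts only existence of a $10/3$-coloring and the witnesses are explicit in the figure. I would present the argument simply as "the colorings indicated in \Cref{fig:10/3-colorable graphs} are readily verified to be edge-sign preserving homomorphisms to $K^s_{10;3}$", possibly accompanied by one sample calculation (such as the one above for $\widehat{H}_1$) and a remark that the remaining verifications are analogous.
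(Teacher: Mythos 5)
Your proposal is correct and takes essentially the same approach as the paper: the paper's proof is the one-liner "For each of them, a 10/3-coloring is given in \Cref{fig:10/3-colorable graphs}," and you simply spell out what that verification entails (translate $\pm i$ labels into $\mathbb{Z}_{10}$ via $i \mapsto i$, $-i \mapsto i+5$, and check circular distances against the edge signs), with your sample computation for $\widehat{H}_1$ checking out.
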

	\begin{proof}
		For each of them, a 10/3-coloring is given in \Cref{fig:10/3-colorable graphs}.
	\end{proof}
	
	\subsection{List $10/3$-coloring}
	
	List assignment is a natural consequence of inductive approach. Having colored a few vertices, each remaining vertex $v$ has a list $L(v)$ of available colors and a list $\bar{L}(v)$ of forbidden colors. The goal is then to choose colors from the list $L(v)$ for each vertex $v$ that is not colored such that the conditions for all the edges are satisfied. To this end, we prefer to work with edge-sign preserving mappings to $K_{10;3}^s$ rather than homomorphism to $\widehat{K}_{10;3}^s$. Thus $\pm[5]=\{ \pm 1, \pm 2, \pm 3, \pm 4, \pm 5\}$ is the set of all colors and a coloring $\phi$ must satisfy the condition that if $x\sim y$, then $\phi(x)$ is adjacent to $\phi(y)$ in $K_{10;3}^s$ with an edge of the sign $\sigma(xy)$.
	
	In most cases, rather than working with the actual list $\mathcal{L}=\{L(v)\}$ of available colors for a set of vertices, we will work with the number $f(v)=|L(v)|$ of colors available at each vertex. Our aim then would be to show that if $f(v)$ is large enough for each $v$, then regardless of the structure of $L(v)$ there will always be a coloring. In such cases, the signed graph under consideration will be referred to as $f$-list-colorable. In some cases, we will also need the structure of $L(v)$ in $K_{10;3}^s$ to achieve our goal. 
		
	A warm up observation is the following.
	
	\begin{observation}\label{obs:counting_partial_coloring-K2}
		Given $(K_2, \sigma)$, on vertices $x$ and $y$, if $x$ is already colored, then $|L(y)|=5$ and $|\bar{L}(y)|=5$. Moreover, $L(y)$ is a list of five consecutive colors in the cyclic ordering of $\pm [5]$. 
	\end{observation}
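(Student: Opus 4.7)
The plan is to read the statement directly off the structure of $K^s_{10;3}$, since the observation is essentially a counting exercise once the positive and negative neighborhoods in the target are made explicit.

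Setting $p=10$ and $q=3$ in the definition of the signed circular clique, a vertex $i\in\{0,1,\dots,9\}$ of $K^s_{10;3}$ is joined to $j$ by a positive edge exactly when $|i-j|\in\{0,1,2,8,9\}$, and by a negative edge exactly when $|i-j|\in\{3,4,5,6,7\}$. Enumerating these, every vertex has precisely five positive neighbors (namely $i$, $i\pm 1$, $i\pm 2$ taken mod $10$, with $i$ itself coming from the positive loop) and precisely five negative neighbors (namely $i\pm 3$, $i\pm 4$, $i+5$). This exhausts the ten vertices of $K^s_{10;3}$.

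Now fix the color $\phi(x)$ of $x$. A color $c\in\pm[5]$ lies in $L(y)$ iff the edge $xy$ of sign $\sigma(xy)$ admits the image $\phi(x)c$ of the same sign in $K^s_{10;3}$; that is, $c$ must be a positive neighbor of $\phi(x)$ when $\sigma(xy)=+$, and a negative neighbor when $\sigma(xy)=-$. By the count above, $|L(y)|=5$ in either case, and so $|\bar{L}(y)|=10-5=5$.

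For the final clause, I would invoke the correspondence of \Cref{fig:circular_clique}(a) between $\{0,\dots,9\}$ and the cyclic order $1,2,3,4,5,-1,-2,-3,-4,-5$ of $\pm[5]$. Under this correspondence both neighbor-sets $\{i,i\pm 1,i\pm 2\}$ and $\{i\pm 3,i\pm 4,i+5\}$ appear as arcs of five consecutive vertices, so $L(y)$ is always a block of five consecutive colors. No essential obstacle arises — the only care required is the bookkeeping needed to translate between the integer labelling of $K^s_{p;q}$ from the definition and the $\pm[5]$ labelling used elsewhere in the paper.
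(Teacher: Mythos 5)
Your proof is correct and takes essentially the same route as the paper: both arguments come down to the structural fact that in $K^s_{10;3}$ each vertex has exactly five positive neighbors forming a consecutive arc (centered at the vertex, including the loop) and exactly five negative neighbors forming the complementary arc, so the list $L(y)$ inherited from $\phi(x)$ across the edge $xy$ is always a block of five consecutive colors regardless of $\sigma(xy)$. The only difference is that you spell out the distance sets $\{0,1,2,8,9\}$ and $\{3,4,5,6,7\}$ explicitly, whereas the paper simply asserts the interval/complement structure.
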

	
	This is based on the fact that the set of positive neighbors of each vertex $v$ of $K_{10;3}^s$ is the consecutive interval of length five with $v$ at its center. The set of negative neighbors is the complement, hence also a consecutive interval of length five.

	Before stating the next lemma, we note that switching a vertex $x$ and changing its color to $-\phi(x)$ has no impact at the list of available colors at its neighbors. Similarly, after a switching at a vertex $u$, the list of available colors changes from $L(u)$ to $-L(u)$. Thus, in many arguments, we may select a desired equivalent signature, and assume colors and lists are modified to fit the chosen signature.

	\begin{lemma}\label{lem:counting_partial_coloringK3}
		Given a positive triangle $uvw$, if $u$ and $w$ are already colored, then $|L(v)|$ is a list of  three or four or five consecutive colors.
	\end{lemma}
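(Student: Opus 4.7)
The plan is to read off $L(v)$ directly from the structure of $K_{10;3}^s$ and combine this with the observation on the positive edge $uw$. Working with the edge-sign preserving picture inside $K_{10;3}^s$, for any vertex $x \in \pm[5]$ the set $N^+(x)$ of positive neighbors of $x$ (including $x$ itself, via the positive loop) is, by construction, a cyclic interval of length $5$ centred at $x$; the set $N^-(x)$ is the complementary cyclic interval of length $5$. Since the edges $uv$ and $wv$ are positive, the list of admissible colors for $v$ is
\[
L(v) \;=\; N^+(\phi(u)) \cap N^+(\phi(w)),
\]
so the task reduces to describing the intersection of two $5$-element cyclic intervals of $\pm[5]$.

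Next I would use the hypothesis that $uw$ is a positive edge, which forces $\phi(w) \in N^+(\phi(u))$, i.e.\ the cyclic distance between $\phi(u)$ and $\phi(w)$ in $\pm[5]$ is at most $2$. After possibly switching at a vertex (which, as noted just before \Cref{obs:counting_partial_coloring-K2}, does not affect the resulting list), I may assume without loss of generality that $\phi(u)$ and $\phi(w)$ are $\phi(u)$ and $\phi(u)+d$ in cyclic coordinates, where $d \in \{0,1,2\}$. A direct case analysis then gives the three possibilities: for $d=0$ we get $L(v)=N^+(\phi(u))$, a $5$-interval; for $d=1$ the intersection $[\phi(u)-2,\phi(u)+2] \cap [\phi(u)-1,\phi(u)+3]$ is the $4$-interval $[\phi(u)-1,\phi(u)+2]$; and for $d=2$ the intersection is the $3$-interval $[\phi(u),\phi(u)+2]$.

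In each case $L(v)$ is a cyclic interval of consecutive colors in $\pm[5]$ of length exactly $5-d \in \{3,4,5\}$, which is the claim. There is no genuine obstacle here: the statement is a one-step structural consequence of \Cref{obs:counting_partial_coloring-K2} together with the geometry of $K_{10;3}^s$, and the only point to check carefully is that the positive loops at $\phi(u)$ and $\phi(w)$ are correctly accounted for so that $\phi(v) = \phi(u)$ (or $\phi(v) = \phi(w)$) remains an option in the $d=0,1,2$ cases, which is exactly what keeps the intersection nonempty and of the stated sizes.
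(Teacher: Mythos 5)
Your proof is correct and follows essentially the same route as the paper: switch so that the triangle has all positive edges, observe that $\phi(w)\in N^{+}(\phi(u))$ forces cyclic distance $d\in\{0,1,2\}$, and compute the intersection $N^{+}(\phi(u))\cap N^{+}(\phi(w))$ to be a consecutive interval of size $5-d$. The paper just instantiates this with the concrete color pairs $(1,1)$, $(1,2)$, $(1,3)$ rather than the abstract offset $d$. One small wrinkle: you begin the first paragraph by treating $uv$ and $wv$ as positive edges before invoking switching, whereas the hypothesis is only that the \emph{triangle} is positive; you should state up front (as the paper does in its opening line) that by switching, and correspondingly replacing $L(v)$ by $-L(v)$, one may assume all three edges are positive, since the remark you cite is needed precisely to justify that reduction, not merely to normalize cyclic coordinates.
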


	\begin{proof}
		We assume all three edges are positive. Assume $\phi$ is a proper coloring of $uw$. Then, up to symmetries, we have three options for the pair $(\phi(u), \phi(w))$:
		$(\phi(u), \phi(w))=(1,1)$, $(\phi(u), \phi(w))=(1,2)$, and $(\phi(u), \phi(w))=(1,3)$. In the first case we have $L(v)=\{-4,-5,1,2,3\}$, in the second case $L(v)=\{-5,1,2,3\}$ and in the last case $L(v)=\{1,2,3\}$.
	\end{proof}
	
	The next lemma is easy to observe from \Cref{fig:circular_clique}.
	
	\begin{lemma}\label{lem:distance_i} Given two vertices $x, y$ of $K_{10;3}^s$, if their cyclic distance is $i$ ($1\leq i\leq 5$), then the number of vertices joined to at least one of them by a positive edge (similarly by a negative edges), is $5+i$. 
	\end{lemma}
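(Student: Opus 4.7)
The plan is to prove the claim by a direct inclusion–exclusion argument using the cyclic interval structure of positive/negative neighborhoods in $K_{10;3}^s$. Label the vertices $\{0,1,\ldots,9\}$ and recall from the definition of $K_{10;3}^s$ that for any vertex $v$, the set of vertices joined to $v$ by a positive edge (counting the positive loop, so including $v$ itself) is the consecutive cyclic interval $[v-2,v+2]$ of length $5$, and the set of vertices joined to $v$ by a negative edge is the complementary consecutive cyclic interval $[v+3,v+7]$, also of length $5$.

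Without loss of generality, by vertex-transitivity, one can take $x=0$ and $y=i$ for $1\le i\le 5$. For the positive case, the positive neighborhoods are the intervals $A=\{-2,-1,0,1,2\}$ and $B=\{i-2,i-1,i,i+1,i+2\}$ (modulo $10$), each of size $5$. A direct inspection of these two cyclic intervals of length $5$ at shift $i$ (where $1\le i\le 5$) shows that their intersection is the cyclic interval $[i-2,2]$, which has exactly $5-i$ elements. By inclusion–exclusion, $|A\cup B|=5+5-(5-i)=5+i$, which is the desired count.

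For the negative case, the two negative neighborhoods are $A'=\{-5,-4,-3,3,4\}$ and $B'=\{i+3,i+4,i+5,i+6,i+7\}$. Since each negative neighborhood is the complement of the corresponding positive neighborhood in a set of $10$ vertices, the intersection $A'\cap B'$ is obtained from the intersection of the complements, and the same interval-shift calculation yields $|A'\cap B'|=5-i$. Hence $|A'\cup B'|=5+i$ as well, completing the argument.

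The main step, and essentially the only one, is identifying the size of the overlap of two consecutive cyclic intervals of length $5$ at distance $i$ on a cycle of length $10$; this is a routine computation but is the whole content of the lemma, so no real obstacle is expected.
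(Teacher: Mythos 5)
Your proof is correct. The paper itself gives no formal argument for this lemma, remarking only that it is ``easy to observe from'' the picture of $K_{10;3}^s$; your explicit inclusion--exclusion calculation on the two length-$5$ cyclic intervals $A$, $B$ at shift $i$ (giving $|A\cap B|=5-i$, hence $|A\cup B|=5+i$, with the negative case following by complementation since $A'=A^c$ and $B'=B^c$ so $|A'\cap B'|=10-|A\cup B|=5-i$) is precisely the computation the authors leave implicit.
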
	
	
	Noting that in any set $X$ of vertices we have  two elements of circular distance at least $\min\{5, |X|-1\}$, we conclude that:
	
	\begin{lemma}\label{lem:a+4} For any set  $X$ of colors, the number of positive (or negative) neighbors of $X$ is at least $\min \{10, |X|+4\}$.
	\end{lemma}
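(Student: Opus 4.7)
The plan is to reduce immediately to Lemma~\ref{lem:distance_i} by exhibiting two elements of $X$ whose cyclic distance is at least $\min\{5,|X|-1\}$. Given such a pair $\{x,y\}\subseteq X$, Lemma~\ref{lem:distance_i} yields $5+d(x,y)\ge 5+\min\{5,|X|-1\}=\min\{10,|X|+4\}$ vertices joined to $\{x,y\}$ by positive (respectively negative) edges, and these all lie in the corresponding neighborhood of $X$. The case $|X|=1$ is immediate, since every vertex of $K^s_{10;3}$ has exactly five positive and five negative neighbors, matching $\min\{10,5\}=5$.

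To produce the far pair, I would split on $|X|$. If $|X|\ge 6$, the ten vertices of $K^s_{10;3}$ decompose into five antipodal pairs at cyclic distance $5$, so a pigeonhole argument forces $X$ to contain both elements of some such pair. If $2\le |X|\le 5$, I would argue by contradiction: assume the cyclic diameter of $X$ is some $D\le |X|-2\le 3$, and pick $a,b\in X$ with $d(a,b)=D$. Every point of $X$ must lie in the intersection $B$ of the closed balls of radius $D$ around $a$ and around $b$. Since $D\le 3$ implies $2D<10-D$, no wraparound from the far side of the cycle contributes to $B$, so $B$ is exactly the short arc from $a$ to $b$, of cardinality $D+1$. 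A direct check for $D=1,2,3$ confirms this. Hence $|X|\le D+1\le |X|-1$, a contradiction.

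Combining the two ingredients completes the proof. The only subtle point is confirming that the ball intersection stays small, which requires the bound $D\le 3$; the geometry degenerates once $D$ approaches half the cycle length, and this is precisely why the antipodal pigeonhole argument must take over for $|X|\ge 6$.
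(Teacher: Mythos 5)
Your proposal is correct and follows essentially the same route as the paper: reduce to Lemma~\ref{lem:distance_i} by exhibiting two elements of $X$ at circular distance at least $\min\{5,|X|-1\}$. The paper asserts this combinatorial fact in a single clause without justification; your antipodal-pigeonhole argument for $|X|\ge 6$ and the ball-intersection argument for $2\le|X|\le 5$ simply supply the details of the same step.
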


	\begin{lemma}\label{lem:K2P3Coloring}    
		In each of the following cases, the given signed graph together with the associated assignment $f$ is $f$-list-colorable.
		\begin{itemize}	
			\item[(a)] $(K_2, \sigma)$ with $u,v$  as vertices and $f$ satisfying $f(u), f(v)\geq 1$ and $f(u)+f(v)\geq 7$.
			
			\item[(b)] $(P_3, \sigma)$ on vertices $v_1,v_2,v_3$ in the order of the path and $f$ satisfying: $f(v_i)\geq 1$ for $i=1,2,3$, $f(v_i)+f(v_{i+1})\geq 7$ for $i=1,2$ and $f(v_1)+f(v_2)+f(v_3)\geq 13$.  
		\end{itemize}
	\end{lemma}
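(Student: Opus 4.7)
The plan is to use Lemma~\ref{lem:a+4} to count ``bad'' color choices in each part.

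For (a), let $\epsilon=\sigma(uv)$. A color $c_u\in L(u)$ extends to the whole edge if and only if $L(v)$ meets the $\epsilon$-neighborhood $N_\epsilon(c_u)$ of $c_u$ in $K^s_{10;3}$; equivalently, iff $c_u$ itself lies in $N_\epsilon(L(v))$, the set of $\epsilon$-neighbors of $L(v)$. Lemma~\ref{lem:a+4} yields $|N_\epsilon(L(v))|\geq \min\{10,f(v)+4\}$, so the number of $c_u\in\pm[5]$ outside $N_\epsilon(L(v))$ is at most $\max\{0,6-f(v)\}$. Since $|L(u)|=f(u)$, the number of good choices in $L(u)$ is at least $f(u)-\max\{0,6-f(v)\}$, which is $\geq f(u)+f(v)-6\geq 1$ when $f(v)\leq 5$ and $\geq f(u)\geq 1$ when $f(v)\geq 6$. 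In either case a valid coloring of the edge exists.

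For (b), I would color $v_2$ first and then extend to $v_1$ and $v_3$ independently. Writing $\sigma_i=\sigma(v_iv_{i+1})$, the color $c_2\in L(v_2)$ admits a simultaneous extension to $v_1$ and $v_3$ exactly when $c_2\in N_{\sigma_1}(L(v_1))\cap N_{\sigma_2}(L(v_3))$. Two applications of Lemma~\ref{lem:a+4}, as in part (a), show that the number of $c_2\in L(v_2)$ missing $N_{\sigma_1}(L(v_1))$ is at most $\max\{0,6-f(v_1)\}$, and analogously at most $\max\{0,6-f(v_3)\}$ for the other side. Hence a good $c_2$ exists whenever
\[
f(v_2)>\max\{0,6-f(v_1)\}+\max\{0,6-f(v_3)\}.
\]

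I would verify this inequality by splitting on whether each of $f(v_1),f(v_3)$ exceeds $5$. When both satisfy $f(v_i)\leq 5$, the inequality is equivalent to $f(v_1)+f(v_2)+f(v_3)\geq 13$; when exactly one of them, say $f(v_1)$, is $\geq 6$, the corresponding $\max$ vanishes and the inequality reduces to the pairwise bound $f(v_2)+f(v_3)\geq 7$; and when both are $\geq 6$ it is just $f(v_2)\geq 1$. Each reduction is furnished by the hypotheses. Once a valid $c_2$ is fixed, the remaining lists $L(v_1)\cap N_{\sigma_1}(c_2)$ and $L(v_3)\cap N_{\sigma_2}(c_2)$ are nonempty by construction, and we pick colors for $v_1$ and $v_3$ independently. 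The only point needing care is the bookkeeping across the $\max\{0,\cdot\}$ terms in the case split; the combinatorial heart of the proof is a single application of Lemma~\ref{lem:a+4} once in (a) and twice in (b).
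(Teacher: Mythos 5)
Your proof is correct and uses essentially the same strategy as the paper: in both parts the middle/first vertex is handled via Lemma~\ref{lem:a+4}, and for (b) you color $v_2$ first from a restricted list and then extend. The only organizational difference is in (b): the paper restricts $L(v_2)$ by compatibility with $v_1$ and then re-invokes part~(a) on the edge $v_2v_3$, giving a single inequality $f(v_1)+f(v_2)-6+f(v_3)\geq 7$, whereas you do a symmetric union bound on the bad colors from both sides at once, which forces the short case split on the $\max\{0,\cdot\}$ terms; both computations are valid and yield the same conclusion.
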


	\begin{proof}		
		For $(a)$, assume $\mathcal{L}$ is a list assignment with $|L(u)|=f(u)=a$ ($a\leq 6$) and $|L(v)|=f(v)\ge 7-a$. It follows from \Cref{lem:a+4} that there exists a color set $C(v)$ with $|C(v)|=a+4$ such that for each color in $C(v)$, there is a suitable color $\phi(u)\in L(u)$. Since $7-a+a+4>10$, $C(v)\cap L(v)\neq \emptyset$, hence we have suitable colors for both vertices from their lists of available colors.

		For $(b)$, assume $\mathcal{L}$ is a list assignment with $|L(v_1)|=f(v_1), |L(v_2)|=f(v_2) \text{ and } |L(v_3)|=f(v_3).$ Again, applying \Cref{lem:a+4} on $v_1v_2$ we have a set $C(v_2)$ of at least $\min\{10, f(v_1)+4\}$ colors for each of which there is a suitable color in $L(v_1)$. Basic inclusion exclusion implies $|C(v_2)\cap L(v_2)|\ge \min\{10, f(v_1)+4+f(v_2)-10\}=\min\{10, f(v_1)+f(v_2)-6\}$. 
		
		Applying \Cref{lem:K2P3Coloring}(a) to $(v_2v_3)$ with $C(v_2)\cap L(v_2)$ as the list of $v_2$ and $L(v_3)$ as the list of $v_3$, noting that $|C(v_2)\cap L(v_2)|+|L(v_3)|\geq f(v_1)+f(v_2)-6+f(v_3)\geq 7$, we have a valid assignment of colors $c(v_2)\in C(v_2)\cap L(v_2)$ and $c(v_3)\in L(v_3)$ after which, by the choice of $C(v_2)$, we may select a valid color $c(v_1)\in L(v_1)$. 
	\end{proof}

	\begin{lemma}\label{lem:4-cycle}
		Let $(C_4, \sigma)$ be the signed graph where $v_{1}, v_{2}, v_{3}, v_{4}$ are the vertices of $C_4$ in cyclic order and $v_{3}v_{4}$ is the only positive edge. Let $\mathcal{L}$ be a list assignment where $L(v_{1})$ and $L(v_{2})$ are intervals of length 7 and  $L(v_{3})$ and $L(v_{4})$ are intervals of length 5. Then $(C_4, \sigma)$ is $\mathcal{L}$-colorable.
	\end{lemma}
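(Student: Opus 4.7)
The plan is to first normalize the sign pattern via switching: switching the vertex set $\{v_1, v_2\}$ reverses the signs of $v_2 v_3$ and $v_4 v_1$ while preserving $v_1 v_2$ and $v_3 v_4$, yielding an equivalent instance where $v_1 v_2$ is the unique negative edge and $v_2 v_3, v_3 v_4, v_4 v_1$ are positive. This switching also replaces $L(v_1)$ and $L(v_2)$ by their antipodal images in $K_{10;3}^s$, which preserves both length and interval structure. I will write $A, B, C, D$ for the (possibly transformed) lists at $v_1, v_2, v_3, v_4$.

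With this reduction in hand, the task becomes finding $c_i \in L(v_i)$ with $(c_1, c_2)$ at cyclic distance $\geq 3$ and each of $(c_2, c_3)$, $(c_3, c_4)$, $(c_4, c_1)$ at cyclic distance $\leq 2$. My strategy is to first fix a positive-edge pair $(c_3, c_4) \in C \times D$---by \Cref{lem:a+4} applied to $C$, we have $|D \cap N^+(C)| \geq |C|+4-5 = 4$, so several candidates exist---and then search for $(c_1, c_2) \in A' \times B'$ at cyclic distance $\geq 3$, where $A' = A \cap N^+(c_4)$ and $B' = B \cap N^+(c_3)$. Each of $A', B'$ has size at least $7 + 5 - 10 = 2$ and, being an intersection of two intervals on a cycle, is a union of at most two sub-intervals with known position relative to $c_3$ or $c_4$.

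The main obstacle is that the bound $|A'| + |B'| \geq 4$ falls short of the $7$ required by \Cref{lem:K2P3Coloring}(a), so that lemma does not directly guarantee a valid $(c_1, c_2)$. I plan to handle this by case analysis on the relative positions of $C$ and $D$: using the cyclic symmetry of $K^s_{10;3}$, I will set $C = \{0, 1, 2, 3, 4\}$ and, using a reflection symmetry, restrict the starting point of $D$ to $\{0, 1, 2, 3, 4, 5\}$. For each of these six sub-cases I will enumerate the positive-edge pairs $(c_3, c_4)$ and verify that, for every pair of $7$-intervals $A$ and $B$, at least one pair gives rise to $A', B'$ containing elements at cyclic distance $\geq 3$.

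The hardest case is the antipodal configuration $D = \{5, 6, 7, 8, 9\}$, where only six candidate pairs are available, namely $(c_3, c_4) \in \{(3,5), (4,5), (4,6), (0,8), (0,9), (1,9)\}$; here I will rely on the interval structure of $A$ and $B$ (equivalently, on the fact that the ``gaps'' $A^c$ and $B^c$ are $3$-intervals) to ensure that some pair always yields the desired $(c_1, c_2)$. In particular, among the six candidate pairs, the 5-intervals $N^+(c_4)$ together cover nine of the ten colors in $\mathbb{Z}_{10}$, and likewise for $N^+(c_3)$, so a pigeonhole on the 3-intervals $A^c, B^c$ forces some pair to leave $A', B'$ ``spread'' far enough to include a negative-distance pair. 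For the five non-antipodal positions of $D$, the increased count of positive-edge pairs makes the verification routine, and in several of these configurations the stronger bound $|A'| + |B'| \geq 7$ can be achieved directly, allowing \Cref{lem:K2P3Coloring}(a) to finish the argument.
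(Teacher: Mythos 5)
Your reduction is sound: switching $\{v_1,v_2\}$ normalizes the signature so that $v_1v_2$ is the unique negative edge, and the problem becomes one of choosing a positive-edge pair $(c_3,c_4)\in C\times D$ and then a negative-distance pair $(c_1,c_2)\in A'\times B'$, where $A'=A\cap N^+(c_4)$ and $B'=B\cap N^+(c_3)$. This approach is even complete in principle, since any valid $\mathcal{L}$-coloring witnesses such a pair for some $(c_3,c_4)$. But what you have written is a proof plan, not a proof. The decisive step --- ruling out the possibility that for every candidate pair $(c_3,c_4)$ the restricted lists $A',B'$ contain no pair at cyclic distance $\geq 3$ --- is announced (``I will enumerate'', ``I will verify'') but never carried out. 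You correctly note that $|A'|+|B'|\geq 4$ is too weak for \Cref{lem:K2P3Coloring}(a); the claimed pigeonhole on the $3$-intervals $A^c,B^c$ in the antipodal case $D=\{5,\ldots,9\}$ is a heuristic, not an argument, and the five remaining positions of $D$ are dismissed as ``routine'' with no pair $(c_3,c_4)$ exhibited and no list shapes analyzed. Until that enumeration is actually done, the lemma is not proved.

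For comparison, the paper avoids enumerating both $c_3$ and $c_4$: it splits on $|L(v_3)\cap L(v_4)|\in\{\geq 2,\ 1,\ 0\}$, fixes a single color at one of $v_3,v_4$, and applies \Cref{lem:K2P3Coloring}(b) to the remaining path $v_1v_2v_3$, where the sum bound $13$ is attainable even though the pair bound $7$ is not; the two or three list shapes that resist this reduction are finished off with explicit colorings. The structural idea you are missing is to keep three vertices together as a path and invoke part~(b) of \Cref{lem:K2P3Coloring}, rather than collapsing to a single edge and part~(a).
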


	\begin{proof}
		Based on $|L(v_{3})\cap L(v_{4})|$, we consider three cases:\\
		{\bf Case $|L(v_{3})\cap L(v_{4})|\ge 2$.}
		Since the lists are intervals of colors, we can find a color  $c\in L(v_{4})$ such that $|N^{+}_{K_{10;3}^{s}}(c)\cap L(v_{3})|\ge 4$.
		Let $L'(v_{1})=L(v_{1})\cap N^{-}_{K_{10;3}^{s}}(c)$, $L'(v_{2})=L(v_{2})$, and $L'(v_{3})=L(v_{3})\cap N^{+}_{K_{10;3}^{s}}(c)$. Observe that  $|L'(v_{1})|\ge 2$, $|L'(v_{2})|=7$, and $|L'(v_{3})|\geq 4$.  Then after assigning color $c$ to $v_4$, by \Cref{lem:K2P3Coloring}(b), the path $v_{1}v_{2}v_{3}$ admits an $\mathcal{L'}$-coloring and thus we have an $\mathcal{L}$-coloring of $(C_4, \sigma)$.   
		
		{\bf Case $|L(v_{3})\cap L(v_{4})|=1$}. Without loss of generality, let $L(v_{4})=\{1,2,3,4,5\}$ and $L(v_{3})=\{5,-1,-2,-3,-4\}$.  Consider the coloring $\phi(v_{4})=5$. In the modified list for $v_{3}$, we have three colors: $5$, $-1$, $-2$. If $|L(v_{1})\cap N^{-}_{K_{10;3}^{s}}(5)|\ge 3$, then we are done by applying \Cref{lem:K2P3Coloring}(b) on the path $v_1v_2v_3$ with modified lists. Otherwise we have $\{3,4,5,-1,-2\} \subseteq L(v_{1})$. Repeating the same argument starting with assignment of $5$ to $v_3$ and applying \Cref{lem:K2P3Coloring}(b) on the path $v_2v_1v_4$ we conclude that $\{3,4,5,-1,-2\} \subseteq L(v_{2})$. Thus an $\mathcal{L}$-coloring of $(C_4, \sigma)$ is as follows: $\phi(v_{1})=-1$, $\phi(v_{2})=3$, $\phi(v_{3})=-4$ and $\phi(v_{4})=1$.

		{\bf Case $L(v_{3})\cap L(v_{4})=\emptyset$}. Without loss of generality, let $L(v_{4})=\{1,2,3,4,5\}$ and $L(v_{3})=\{-1,-2,-3,-4,-5\}$.  We first consider $\phi(v_{4})=1$,  in the modified list for $v_{3}$, we have two colors: $-4$, $-5$. If $|L(v_{1})\cap N^{-}_{K_{10;3}^{s}}(1)|\ge 4$, then we are done by \Cref{lem:K2P3Coloring}(b).  Thus we assume $|L(v_{1})\cap N^{-}_{K_{10;3}^{s}}(1)|\leq 3$. By symmetry, $|L(v_{1})\cap N^{-}_{K_{10;3}^{s}}(5)|\leq 3$. Hence $L(v_{1})=\{-5,1,2,3,4,5,-1\}$. By a similar argument we conclude that $L(v_{2})=-L(v_{1})$. Then an $\mathcal{L}$-coloring $\phi$  is as follows: $\phi(v_{1})=4$, $\phi(v_{2})=-2$, $\phi(v_{3})=-5$ and $\phi(v_{4})=1$.
	\end{proof}

	\begin{lemma}\label{lem:K_{2,3}}
		Given $(K_{2,3}, \sigma)$, let $\mathcal{L}$ be a list assignment such that each vertex of the smaller part of $K_{2,3}$, $\{u,v\}$, has a full list (size 10), and each of the vertices of the other part, $\{x_{1}, x_{2}, x_{3}\}$, is assigned an interval of length 5. Then  $(K_{2,3}, \sigma)$ is $\mathcal{L}$-colorable. 
	\end{lemma}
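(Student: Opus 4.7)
My plan is to reduce to a small number of sign configurations by switching, then in each case choose $c(u)$ and $c(v)$ explicitly and apply a union bound on the residual lists of the $x_i$'s. Switching at $x_i$ flips both edges $ux_i, vx_i$ and replaces $L(x_i)$ by $-L(x_i)$, which remains an interval of length $5$ in the cyclic ordering of $\pm[5]$. Switching at $v$ keeps $L(v)$ full and flips the three signs $\sigma(vx_i)$ simultaneously. Hence I may assume every $ux_i$ is positive, and that either (A) all three $vx_i$ are negative, or (B) exactly one $vx_i$ (say $vx_1$) is positive.

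In Case (A), I would set $c(v)$ to the antipodal color of $c(u)$, so that $N^{-}_{K_{10;3}^{s}}(c(v)) = N^{+}_{K_{10;3}^{s}}(c(u))$. Writing $I_u := N^{+}_{K_{10;3}^{s}}(c(u))$, the residual list at each $x_i$ becomes $L(x_i) \cap I_u$; this is empty only when $I_u$ equals the length-$5$ complement $\overline{L(x_i)}$, which pins down one value of $c(u)$ per index. So at most $3$ of the $10$ choices of $c(u)$ are bad, leaving $\geq 7$ good choices, any of which extends to a coloring by picking $c(x_i) \in I_u \cap L(x_i)$ arbitrarily.

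In Case (B), I would take $c(u)$ and $c(v)$ at cyclic distance $4$. Then
$$P := N^{+}_{K_{10;3}^{s}}(c(u)) \cap N^{+}_{K_{10;3}^{s}}(c(v))$$
is a single color (the one at cyclic distance $2$ from $c(u)$ in the direction of $c(v)$), while
$$N := N^{+}_{K_{10;3}^{s}}(c(u)) \cap N^{-}_{K_{10;3}^{s}}(c(v))$$
is a length-$4$ interval covering the rest of $N^{+}_{K_{10;3}^{s}}(c(u))$. For the coloring I need $P \subseteq L(x_1)$ and $N \cap L(x_i) \neq \emptyset$ for $i \in \{2,3\}$. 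Since $|P|=1$ and $|\overline{L(x_1)}|=5$, the first requirement fails for exactly $5$ values of $c(u)$; and since the length-$4$ interval $N$ fits inside the length-$5$ interval $\overline{L(x_i)}$ in only $5-4+1 = 2$ positions, each of the last two requirements fails for at most $2$ values of $c(u)$. The union bound gives at most $5+2+2 = 9$ bad $c(u)$, so at least one good choice exists.

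I expect Case (B) to be the delicate point: the cyclic distance $d$ between $c(u)$ and $c(v)$ must be exactly $4$. At $d \in \{1,2,3\}$ one has $|N|<4$, the number of positions of $N$ inside $\overline{L(x_i)}$ jumps to $\geq 3$, and combined with the bad count at $x_1$ the three contributions reach $\geq 10$; at $d = 5$ we lose $P$ entirely. Only $d=4$ balances the single-color constraint at $x_1$ against the two length-$4$ interval constraints at $x_2, x_3$ so that the total bad count stays $\leq 9$ and the argument closes.
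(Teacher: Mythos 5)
Your proof is correct, and it takes a genuinely different route from the paper's. The paper first observes that some 4-cycle of $K_{2,3}$ through $u,v$ is positive (since the three 4-cycles have sign product $+$), switches so that cycle, say $ux_1vx_2$, is all-negative, then colors the two degree-$2$ vertices $x_1,x_2$ at cyclic distance $\leq 1$; by \Cref{lem:distance_i} this leaves $u$ and $v$ with residual lists of size $\geq 4$, and \Cref{lem:K2P3Coloring}(b) applied to the path $u\,x_3\,v$ (with list sizes $4,5,4$) finishes. You instead normalize signs at the two degree-$3$ vertices (all $ux_i$ positive; all $vx_i$ negative, or exactly one positive), color $u,v$ \emph{first} at a carefully chosen cyclic distance ($5$ in Case~(A), $4$ in Case~(B)), and then close by a union bound counting the bad choices of $c(u)$ over the three residual constraints at $x_1,x_2,x_3$. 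Your version is self-contained — it sidesteps both the positive-4-cycle observation and \Cref{lem:K2P3Coloring} — at the cost of a case split and an explicit optimization over the distance $d(c(u),c(v))$; the paper's version is shorter given the auxiliary lemmas it already has in hand, and reflects the general principle of coloring low-degree vertices last. Your concluding remark that $d=4$ is the unique workable distance in Case~(B) is accurate (the bad counts are $4+3+3=10$ at $d=3$, $3+4+4=11$ at $d=2$, $2+5+5=12$ at $d=1$, and $P=\emptyset$ at $d=5$), and it is a nice sanity check that the argument has no slack to spare.
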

	
	\begin{proof}
		As $(K_{2,3}, \sigma)$ contains a positive 4-cycle for any choice of $\sigma$, we may assume $ux_{1}vx_{2}$ is a positive 4-cycle, and, if needed, after a suitable switching together with modification of $\mathcal{L}$, we may assume all edges of $ux_{1}vx_{2}$ are negative. Since $L(x_{1})$ and $L(x_{2})$ are intervals of length 5, one can choose a pair $\phi(x_{1}), \phi(x_{2})$ of colors with cyclic distance 0 or 1. It follows from \Cref{lem:distance_i}  that $|N^{-}_{K_{10;3}^{s}}(\phi(x_{1}))\cap N^{-}_{K_{10;3}^{s}}(\phi(x_{2}))| \geq 4$. After modifying the lists of $u$ and $v$, it is ensured by \Cref{lem:K2P3Coloring}(b) that the coloring can be extended to the remaining three vertices.
	\end{proof}
	

	\begin{lemma}\label{lem:NegativeTirnagle}
		Let $u,v,w$ be the three vertices of $\widehat{G}=(K_3,-)$ and $\mathcal{L}$ be a list-assignment where $|L(u)|+|L(v)|+|L(w)|=18$. Then  $\widehat{G}$ is $\mathcal{L}$-colorable unless we have one of the following cases: 
		
		\begin{itemize}
			\item[(1)] One of the lists is empty.
			
			\item[(2)] $L(u)=L(v)=L(w)=X$, where the subgraph induced by the negative edges of $K_{10;3}^{s}[X]$ is bipartite.
			
			\item[(3)] Up to an isomorphism, $L(u)=L(v)=\{3,4,5,-1,-2,-3,-4\}$, and $L(w)=\{4,5,-2,-3\}$.
			
			\item[(4)] Up to an isomorphism, $L(u)=L(v)=\{3,4,5,-1,-2,-3,-4. -5\}$, and $L(w)=\{5,-3\}$.
			
		\end{itemize}	
		
	\end{lemma}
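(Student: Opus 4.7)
I would identify the colors of $K_{10;3}^s$ with $\mathbb{Z}_{10}$ via the cyclic order $1, 2, 3, 4, 5, -1, -2, -3, -4, -5$ used in \Cref{fig:circular_clique}(a), so that $N^+(c)$ is the cyclic interval of length $5$ centred at $c$ and $N^-(c)$ its complementary interval of length $5$. A short computation shows that every negative triangle of $K_{10;3}^s$ has cyclic distance pattern $(3,3,4)$; equivalently, the negative triangles are exactly the triples $\{a-3, a, a+3\}$ for $a \in \mathbb{Z}_{10}$. Assume towards contradiction the lemma fails, and order so that $|L(u)| \ge |L(v)| \ge |L(w)|$; if $|L(w)| = 0$ then we are in case~(1), so assume all lists nonempty.

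\textbf{Reduction to $K_2$.} The strategy is to fix $c \in L(w)$, colour $w$ by $c$, and extend along the remaining negative edge $uv$ via \Cref{lem:K2P3Coloring}(a) applied to the modified lists $L'(u) = L(u) \cap N^-(c)$ and $L'(v) = L(v) \cap N^-(c)$. Since $|N^+(c)| = 5$, one has $|L'(x)| \ge |L(x)| - 5$, hence
\[
|L'(u)| + |L'(v)| \;\ge\; |L(u)| + |L(v)| - 10 \;=\; 8 - |L(w)|.
\]
For $|L(w)| = 1$ this sum is $\ge 7$ and $|L'(v)| \ge |L(v)| - 5 \ge 2 > 0$, so \Cref{lem:K2P3Coloring}(a) applies directly and yields a valid colouring.

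\textbf{Main analysis for $|L(w)| \ge 2$.} Here the sum bound is too weak, and I would switch to the sharper criterion: extension via $c$ succeeds if and only if $L'(u) \times L'(v)$ contains a pair of cyclic distance $3$ or $4$, because distance $5$ cannot occur inside the length-$5$ interval $N^-(c)$. Inside $N^-(c) = \{a_c, a_c+1, a_c+2, a_c+3, a_c+4\}$, the valid pairs are exactly $\{a_c, a_c+3\}$, $\{a_c+1, a_c+4\}$, and $\{a_c, a_c+4\}$. Hence if extension fails for every $c \in L(w)$, then for each such $c$ the pair $(L'(u), L'(v))$ must avoid splitting any of these three pairs, leaving only a short list of admissible ``thin'' configurations; for instance, $L'(u) \supseteq \{a_c, a_c+4\}$ forces $L'(v) \subseteq \{a_c + 2\}$. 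Translating these local restrictions back to $L(u), L(v)$ via $L(x) = (L(x) \cap N^+(c)) \sqcup L'(x)$ and intersecting across all $c \in L(w)$ pins down the global structure. A finite case split on $|L(w)| \in \{2, 3, 4, 5, 6\}$, normalised by the cyclic symmetry of $\mathbb{Z}_{10}$ and by the switching symmetry $i \mapsto -i$, shows that obstructions arise only for $|L(w)| \in \{2, 4, 6\}$ and coincide, respectively, with the exceptional cases~(4), (3), and~(2).

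\textbf{Main obstacle.} The most delicate piece is the case $|L(w)| = 6$: the constraints force $L(u) = L(v) = L(w) =: X$ with $G^-[X]$ containing no negative triangle, where $G^-$ is the negative-edge subgraph of $K_{10;3}^s$ (the circulant $C_{10}(3,4,5)$); and one must upgrade triangle-freeness to bipartiteness as demanded by case~(2). This reduces to checking that among the $4$-element hitting sets for the ten negative triangles of $K_{10;3}^s$, each (up to the full symmetry group of $K_{10;3}^s$) has complement $X$ for which $G^-[X]$ is bipartite; a finite verification over a small number of orbits. The analogous enumerations for $|L(w)| \in \{2, 4\}$ are more straightforward and produce cases~(4) and~(3) respectively.
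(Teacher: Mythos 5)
Your framework — fix a colour for the smallest list and reduce to the $K_2$ extension inside the $5$-interval $N^-(c)$ — is sound and genuinely different from the paper's argument, which instead applies Hall's theorem to the bipartite incidence between the negative triangles of $K_{10;3}^s$ and the three lists, and then exploits the exact count of $30 - 18 = 12$ forbidden colour--vertex pairs. Your preliminary observations are correct: every negative triangle of $K_{10;3}^s$ has distance pattern $(3,3,4)$ and is of the form $\{a-3,a,a+3\}$; inside the $5$-interval $N^-(c)$, the negative pairs are precisely $\{a_c,a_c+3\}$, $\{a_c+1,a_c+4\}$, $\{a_c,a_c+4\}$; and the $|L(w)|=1$ case is disposed of cleanly via $|L(v)| \ge 17-10 = 7$.

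The genuine gap is that the entire substance of the lemma is asserted rather than proved. The sentence ``A finite case split on $|L(w)| \in \{2,3,4,5,6\}$ \dots\ shows that obstructions arise only for $|L(w)| \in \{2,4,6\}$ and coincide \dots\ with cases (4), (3), and (2)'' is the whole theorem; none of that case split appears. Two claims in particular are far from immediate in your framework and would need real work. First, for $|L(w)|=6$ you assert that failure ``forces $L(u)=L(v)=L(w)$''; nothing in the reduction-to-$K_2$ scheme makes this visible, and the intersection of the per-$c$ constraints on $(L'(u),L'(v))$ over all $c\in L(w)$ does not obviously collapse to equality of the three lists. (In the paper this equality emerges from a delicate count of type-$m$ colours against the budget of $12$ forbidden pairs.) Second, you need to rule out a $6$-set $X$ whose induced negative subgraph is triangle-free but contains an induced $C_5$ — such an $X$ would be a failure case not listed in (2), since only bipartiteness is recorded there; your proposal flags this as a ``finite verification'' but does not carry it out, and its truth is not obvious a priori. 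Beyond these, the intermediate cases $|L(w)|\in\{3,5\}$ also have subtleties you do not address: for instance, with $|L(w)|=5$ one can have $|L'(v)| = 0$ for some $c$, which trivially blocks extension through that $c$ yet imposes the rigid constraint $L(v)=N^+(c)$, and one must check these rigid constraints cannot combine into an unlisted obstruction. Until the case split is actually executed, the proof is not there.
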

	
	\begin{proof}
		Consider a $(K_3, -)$ in $K_{10;3}^{s}$, for example $1,4, -3$.  Mapping $\widehat{G}$ to this triangle is the same as choosing a matching in the bipartite graph formed by $\{1, 4, -3\}$ on the one part and $\{L(u), L(v), L(w)\}$ on the other part where membership determines adjacencies. Thus by Hall's Theorem, if $\widehat{G}$ cannot be mapped to this triangle, then it is for one of the three possible reasons:
		
		\begin{itemize}
			\item[(I)] One of $L(u)$, $L(v)$, or $L(w)$ does not contain any of $1$, $4$, and $-3$.
			\item[(II)] Two of $L(u)$, $L(v)$ and $L(w)$ contain only one of $1$, $4$, and $-3$.
			\item[(III)] $L(u)$, $L(v)$ and $L(w)$ together contain only two of $1$, $4$, and $-3$, that is to say one color appears in none of the three lists. Such a color is referred to as type $m$.
		\end{itemize} 
	
		Consider the pairs $(i,x)$ where $i$ is a color (a vertex of $K_{10;3}^{s}$) and $x$ is one of $u$, $v$ or $w$. Thus we have a total of 30 pairs, of which 18 represents colors in $L(u)$, $L(v)$ and $L(w)$, and hence 12 represent a color forbidden on a vertex. If $\widehat{G}$ cann't map to the triangle $143^{-}$ because of (I) or (III), then we have three pairs representing forbidden colors, and if $\widehat{G}$ cann't map to the triangle $143^{-}$ because of (II), then we have four such pairs. 
		
		Assume both of  $1$ and $1^-$ are type $m$. It means six pairs are forbidden: $(1,u)$, $(1,v)$ and $(1, w)$; $(-1,u)$, $(-1,v)$ and $(-1, w)$. Considering the disjoint triangles $253^-$ and $32^{-}5^{-}$, each of them contributes at least 3 forbidden pairs to the list of forbidden pairs. Since there are 12 forbidden pairs in total, the colors $4$ and $-4$ must be in every list. Similarly, considering the disjoint triangles $254^-$ and $42^{-}5^{-}$, the colors $3$ and $-3$ must be in every list. 
		Note that  each triangle ($253^-$ and $32^{-}5^{-}$, $254^-$ and $42^{-}5^{-}$) contributes exactly 3 forbidden pairs as there are 12 forbidden pairs in total. Thus in particular, item (II) will not happen for any of these triangles. 
		For the negative triangle $253^{-}$, item (I)  will not apply because $-3$ is in every list. Thus one of $2$ and $5$ must be in every list, and the other in none. The same holds for $-2$ and $-5$ when considering the negative triangle $42^{-}5^{-}$. For each color, it is either contained in all lists  $L(u)$, $L(v)$, $L(w)$, or in none of them. Thus $L(u)=L(v)=L(w)$.  There are  four cases for the list: $\{2, 3, 4, -2, -3, -4\}$, $\{2, 3, 4, -3, -4, -5\}$, $\{3, 4, 5, -2, -3, -4\}$, $\{3, 4, 5, -3, -4, -5\}$ . All cases fall in the condition (2) of the lemma.

		Assume both of $1$ and $5$ are type $m$. Then repeating similar argument, by considering disjoint triangles $31^{-}4^{-}$ and $42^{-}5^{-}$,  we conclude that $2$ and $-3$ are in every list. Similarly, working with disjoint triangles $21^{-}4^{-}$ and $32^{-}5^{-}$,  we conclude that $4$ and $-3$ are contained in every list, and disjoint triangles $21^{-}4^{-}$ and $42^{-}5^{-}$ imply $3$ and $-3$ are contained in every list.  It still holds that each triangle ($21^{-}4^-$ and $32^{-}5^{-}$, $31^{-}4^-$ and $42^{-}5^{-}$) contributes exactly 3 forbidden pairs. 
		Considering the triangle $32^{-}5^{-}$, since it contributes exactly 3 forbidden pairs and $3$ is in every list, one of $-2$ and $-5$ must be in every list, and the other in none. Similarly, the negative triangle $21^{-}4^{-}$ implies one of $-1$ and $-4$ is in every list, the other is none.  Thus $L(u)=L(v)=L(w)$. It is not hard to check all cases fall in the condition (2) of the lemma.   
		
		Assume both of $1$ and $4$ are type $m$. The colors $1$ and $4$ have six forbidden pairs.  Since for the triangles $254^-$ and $31^-5^-$, each is with exactly three forbidden colors, we conclude that $-3$ and $-2$ is in all the three lists.  Then considering the triangles  $253^-$ and $31^-4^-$, $-2$ and $-5$ should be in every list. For the triangles  $21^{-}4^-$ and $32^-5^-$, $-3$ and $5$ should be in every list. Considering triangles $253^-$, $32^-5^-$,  since $5, -2, -3, -5$ are all in every list, we conclude that $2$ and $3$ are in no list which means $L(u)=L(v)=L(w)=\{5,-1,-2,-3,-4,-5\}$ and hence the list assignment falls in condition (2) of the lemma.

		If $1$ and $3$ are both of type $m$, then, repeating a similar argument, we first conclude that $5$, $-1$, $-3$, and $-4$ are in every list. In the triangle $253^{-}$, as $5$ and $-3$ are in every list,  $2$ is of type $m$ as well. Thus colors $1$, $2$ and $3$ are already in a total of at least nine forbidden pairs. For the triangle $42^-5^-$, there are two possibilities: Either exactly two of the vertices are in every list, but any pair of them together with  $5$, $-1$, $-3$ and $-4$ induces a bipartite subgraph of negative edges, which is condition (2) of the lemma. Or one of the lists contains none of $4, -2, -5$ and we have the condition (3) of the lemma. 
		
		Assume $1$ and $2$ are of type $m$. We may assume these two are the only type $m$ vertices as otherwise we have one of the previous cases. Then considering the triangles $31^{-}4^{-}$ and $42^{-}5^{-}$, $5$ and $-3$ are in all lists. Then each of the two triangles $31^-4^-$  and $42^-5^-$ must fall in item (I): one of $L(u)$, $L(v)$, $L(w)$ does not contain any color of the triangle. If $L(u)$ does not contain any  vertex of $31^-4^-$ and $42^-5^-$, then we have the last case (4) of the lemma. Otherwise, without loss of generality, we may assume $L(u)\cap \{3,-1,-4\}=\emptyset$ and $L(v)\cap \{4,-2,-5\}=\emptyset$. We assign $3$, $-2$, $-5$ to $v$, $u$, $w$ respectively.

		For the remaining cases, we assume at most one vertex is of type $m$. If there is such a vertex, then assume that is $1$, otherwise assume $1$ is a vertex that appears in only one list. For the three triangles, $253^-$, $31^-4^-$, and $42^-5^-$, at most 10 pairs are forbidden. Thus at least two of $253^-$, $31^-4^-$ and $42^-5^-$ satisfy item (I), that is to say one of $L(u)$, $L(v)$ and $L(w)$ does not contain any color of the chosen triangle. By the symmetry of $253^-$ and $42^-5^-$ with respect to $1$, we assume $\{2,5,-3\}\cap L(u)=\emptyset$. Depending on which is the other triangle and what color it is missing, we have several cases.
		
		{\bf Case 1} $\{4, -2, -5\}\cap L(u)=\emptyset$.
		First we have $\{2, 5, -3, 4, -2, -5\}\subseteq L(v), L(w)$. Since $L(u)\neq \emptyset$, one of $1, 3,-1,-4$ can be assigned to $u$. If $L(u)\cap \{1, 3,-4\}\neq \emptyset$, then the corresponding assignments for $(u,v,w)$ are $(1,4,-3)$,  $(3,-2,-5)$ and  $(-4,2,5)$. Then $L(u)\cap \{1, 3, -1, -4\} =\{-1\}$, and $(3,u)$, $(-4,u)$ are forbidden pairs. But then negative triangle $31^{-}4^{-}$ does not satisfy any of (I), (II), and (III).	
				
		{\bf Case 2} $\{3, -1, -4 \}\cap L(u)=\emptyset$. 
			Similarly, we have $\{2, 5, -3, 3, -1, -4\}\subseteq L(v), L(w)$. If $L(u)\cap \{1, -5\}\neq \emptyset$, then the corresponding assignments for $(u,v,w)$ are $(1,5,-3)$ and  $(-5,3,-1)$. Thus we may assume that $L(u)\cap \{1, -5\}= \emptyset$. Since $L(u)\neq \emptyset$, we have $L(u)\cap \{4,-2\}\neq \emptyset$. 
			
    Assume $4 \in L(u)$. If $1$ is not type $m$, then  $(4, 1, -3)$ or  $(4, -3,1)$ is a corresponding assignment for $(u,v,w)$. Hence we may assume that $1$ is type $m$. Thus there are three forbidden pairs in the negative triangle $42^{-}5^{-}$. However, each of $L(u), L(v), L(w)$ can miss at most two colors of the triangle, since $(-5,u)$ is a forbidden pair while $(4,u)$ is not. This implies that item (I) doesn't hold. Recall that there are three forbidden pairs in $42^{-}5^{-}$, and $1$ is the only one of type $m$. Hence none of (I), (II), (III) can be satisfied.
    
    Assume that $L(u)=-2$. Since $(-2, 3, -5)$, $(-2, -5, 3)$, $(-2, 4, -5)$ and $(-2, -5, 4)$ are possible assignments for $(u,v,w)$, it follows that $\{-2, -5\}\cap L(v)=\emptyset$ and $\{-2, -5\}\cap L(w)=\emptyset$. This contradicts the fact that there are 12 forbidden pairs in total.
					
		 {\bf Case 3} $\{4, -2, -5\}\cap L(v)=\emptyset$. We know that $\{2, 5, -3\}\subseteq L(v), L(w)$ and $\{4, -2, -5\}\subseteq L(u), L(w)$. If $1$ is not type $m$, then $(1, 5, -3)$, $(4,1, -3)$ and $(4, -3, 1)$ are the corresponding assignments for $(u,v,w)$. Thus $1$ is type $m$ and the negative triangle $31^{-}4^{-}$ should satisfy item (I). If  $\{3, -1, -4\}\cap L(w)=\emptyset$ or  $\{3, -1, -4\}\cap L(v)=\emptyset$, then  $(-4, 5, 2)$  is an  assignment for $(u,v,w)$. Otherwise $(4, -5, 2)$ is an assignment for $(u,v,w)$.
		 		
		{\bf Case 4} $\{3, -1, -4 \}\cap L(v)=\emptyset$.  We know that $\{2, 5, -3\}\subseteq L(v), L(w)$ and $\{3, -1, -4\}\subseteq L(u), L(w)$. In this case, $(-4, 2, 5)$ is an assignment for $(u, v, w)$ and we are done.		
			\end{proof}

	The following lemma is an easy exercise and we leave the proof to the reader.
	
	\begin{lemma}\label{lem:2-vertex}
		Let $x,v,y$ be vertices of $(P_3,\sigma)$, with $v$ being the degree 2 vertex. Then a 10/3-coloring $\phi$ of $x$ and $y$ extends to $v$ except in the following two cases:
		
		\begin{itemize}
			\item $\phi(x) = -\phi(y)$  and $\sigma(xv)=\sigma(vy)$,
			\item $\phi(x) = \phi(y)$ and $\sigma(xv)= -\sigma(vy)$.
			
		\end{itemize}
	\end{lemma}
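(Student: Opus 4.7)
The plan is a direct case analysis based on the structure of $K_{10;3}^s$. First I would reformulate: $\phi$ extends to $v$ with color $\phi(v)$ iff $\phi(v)$ lies in $N^{\sigma(xv)}(\phi(x)) \cap N^{\sigma(vy)}(\phi(y))$, where $N^+(c)$ denotes the set of positive neighbors of $c$ in $K_{10;3}^s$, namely the consecutive interval of length five centered at $c$ (as described right after \Cref{obs:counting_partial_coloring-K2}), and $N^-(c)$ denotes the complementary consecutive interval of length five. So the extension fails precisely when this intersection is empty.

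Next I would use the anti-twin symmetry of $K_{10;3}^s$ noted after the definition of the circular chromatic number: for every color $c$, the vertex $-c$ (i.e.\ $c+5$) is anti-twin to $c$, and therefore $N^-(c) = N^+(-c)$. This identity lets me convert every negative constraint into a positive one by replacing the relevant endpoint color by its antipode. Define the \emph{effective color} at $x$ to be $\phi(x)$ if $\sigma(xv)=+$ and $-\phi(x)$ if $\sigma(xv)=-$; analogously define the effective color at $y$. With this convention, the extension exists if and only if $N^+(c_1) \cap N^+(c_2)\neq\emptyset$, where $c_1,c_2$ are the two effective colors.

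A direct check (or the straightforward specialization of \Cref{lem:distance_i}) shows that for two colors $c_1,c_2$ of $K_{10;3}^s$ at cyclic distance $d$, one has $|N^+(c_1)\cap N^+(c_2)| = \max\{0,5-d\}$, and in particular the intersection is empty exactly when $d = 5$, that is, when $c_1 = -c_2$. Translating back through the effective-color substitution produces the two exceptional cases: when $\sigma(xv) = \sigma(vy)$ the condition $c_1=-c_2$ becomes $\phi(x)=-\phi(y)$, while when $\sigma(xv) = -\sigma(vy)$ it becomes $\phi(x)=\phi(y)$. No step is genuinely delicate here; the argument amounts to combining the anti-twin symmetry with a one-line count of positive-neighborhood overlaps, which is why the authors leave the verification to the reader.
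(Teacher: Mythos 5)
Your argument is correct and complete. The key reformulation (extension exists iff $N^{\sigma(xv)}(\phi(x)) \cap N^{\sigma(vy)}(\phi(y)) \neq \emptyset$), the anti-twin identity $N^-(c)=N^+(-c)$, the reduction to effective colors, and the count $|N^+(c_1)\cap N^+(c_2)|=\max\{0,5-d\}$ all check out, and translating $c_1=-c_2$ back through the four sign subcases yields exactly the two exceptional conditions in the statement. The paper explicitly leaves this lemma as an exercise to the reader, so there is no authorial proof to compare against; your write-up is a clean and natural way to carry out that exercise, consistent with the paper's surrounding tools (the anti-twin observation after the definition of circular chromatic number and Lemma~\ref{lem:distance_i}).
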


	\section{Structural properties} 
In this section, we apply the potential technique to provide a list of forbidden configurations in a counterexample to our claim with minimum number of vertices. To this end we first restate our claim.

\begin{theorem}\label{thm:main}
	If $\widehat{G}$ is a 10/3-critical signed graph which is isomorphic neither to $(K_4,-)$ nor to $\widetilde{K}_2$, then $$p(\widehat{G})=3|V(\widehat{G})|-2|E(\widehat{G})|\leq -1.$$
\end{theorem}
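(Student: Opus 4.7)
The plan is a proof by minimum counterexample in the style of Kostochka and Yancey's potential method. Suppose for contradiction that $\widehat{G}$ is a $10/3$-critical signed graph, not isomorphic to $(K_4,-)$ or $\widetilde{K}_2$, satisfying $p(\widehat{G})\ge 0$, and chosen so that $|V(\widehat{G})|$ is minimum among such counterexamples. Since $\widehat{G}$ is $10/3$-critical it is 2-connected and simple, so $\delta(\widehat{G})\ge 2$; the assumption $2|E(\widehat{G})|\le 3|V(\widehat{G})|$ forces an average degree of at most $3$ and, together with $\delta\ge 2$, forces a substantial number of degree-$2$ vertices. The whole proof will consist of ruling out, one by one, a list of local configurations, and then closing with a simple counting step.

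The core reduction template is: given a local configuration $\mathcal{C}$ in $\widehat{G}$, produce a smaller signed graph $\widehat{G}'$ by deleting a vertex or edge, or by identifying a pair of non-adjacent vertices (possibly after a suitable switching). By criticality of $\widehat{G}$ and minimality of $|V(\widehat{G})|$, the graph $\widehat{G}'$ is either $10/3$-colorable, or it contains a $10/3$-critical subgraph which — being smaller — must be $(K_4,-)$, $\widetilde{K}_2$, or already satisfy $p\le -1$. In the first case, I would argue that the coloring of $\widehat{G}'$ extends to $\widehat{G}$ using the list-coloring toolkit developed in Section~2 (Lemmas~\ref{lem:counting_partial_coloringK3}, \ref{lem:K2P3Coloring}, \ref{lem:4-cycle}, \ref{lem:K_{2,3}}, \ref{lem:NegativeTirnagle}, and~\ref{lem:2-vertex}), using Observation~\ref{obs:counting_partial_coloring-K2} to track available colors. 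In the second case, the small critical image implies very rigid structure near $\mathcal{C}$, controllable via Lemmas~\ref{lem:T}, \ref{lem:3-colorable}, \ref{lem:Few_Vertex_Critical}, and the explicit $10/3$-colorings of Lemma~\ref{lem:10/3-colorable_graphs}.

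Using this template I would derive a sequence of forbidden configurations of increasing difficulty: (i) no vertex of degree at most $1$ (trivial from 2-connectedness); (ii) two adjacent degree-$2$ vertices cannot both exist, using Lemma~\ref{lem:2-vertex} to extend a coloring across a contracted edge; (iii) the two neighbors of a degree-$2$ vertex are constrained both in adjacency and in edge-signs (otherwise identify them and extend via Lemma~\ref{lem:K2P3Coloring} or~\ref{lem:NegativeTirnagle}); (iv) short negative cycles, digons of a given form, and $\widehat{T}$-like subgraphs attached to degree-$3$ vertices are reducible because identification or deletion leaves a smaller subgraph with a known coloring from Lemma~\ref{lem:10/3-colorable_graphs}; (v) further structural restrictions on the arrangement of degree-$2$ vertices along paths in $\widehat{G}$, handled by Lemmas~\ref{lem:4-cycle} and~\ref{lem:K_{2,3}}. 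Each reduction shifts the burden from constructing a coloring of $\widehat{G}$ outright to extending a partial coloring with rich list structure.

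The main obstacle I anticipate is handling degree-$3$ vertices whose neighborhoods look almost like $\widehat{T}$ or $\widehat{T}^+$, because there the reduced graph $\widehat{G}'$ can legitimately equal $(K_4,-)$ or $\widehat{T}^+$, and one must pick the \emph{right} reduction (identifying a different pair, or switching first so that the reduced critical image has a specific sign pattern) in order to force a contradiction. Once all forbidden configurations are in hand, the conclusion will follow from a short discharging-style count: assigning initial charge $\deg(v)-3$ to each vertex, the forbidden configurations guarantee that charges can be redistributed so that every vertex ends with charge $\ge 0$ and at least one vertex has strictly positive charge, yielding $\sum_v(\deg(v)-3)\ge 1$, i.e., $2|E(\widehat{G})|\ge 3|V(\widehat{G})|+1$, and hence $p(\widehat{G})\le -1$, contradicting the assumption $p(\widehat{G})\ge 0$.
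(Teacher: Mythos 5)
You have correctly identified the framework the paper uses: a minimum-counterexample argument in the Kostochka--Yancey potential style, combined with the list-coloring extension toolkit and a concluding discharging step. However, your reduction template has a genuine gap at its core. When a reduction $\widehat{G}\to\widehat{G}'$ (deletion, contraction, identification) fails to produce a colorable graph, you note that $\widehat{G}'$ contains a smaller $10/3$-critical subgraph $\widehat{G}''$ which is either $(K_4,-)$, $\widetilde{K}_2$, or satisfies $p(\widehat{G}'')\le -1$; but you never say how the case $p(\widehat{G}'')\le -1$ yields a contradiction, and you treat it implicitly alongside the exceptional small graphs via ``rigid structure,'' which is not how that case is resolved. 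The mechanism the paper needs is the key Lemma~\ref{lem:key}, establishing the \emph{stronger} claim that every proper subgraph $\widehat{H}\subsetneq\widehat{G}$ (apart from $\widehat{T}$ and subgraphs $\widehat{H}$ with $\widehat{G}\in P_2(\widehat{H})$) satisfies $p(\widehat{H})\ge 2$. That lemma is then applied by pulling $\widehat{G}''$ back through the identification map $\phi$: since $p$ is linear, the subgraph $\widehat{G}[V(\widehat{G}''-\phi(\widehat{H}))\cup V(\widehat{H})]$ has potential at most $p(\widehat{G}'')-p(\widehat{G}''\cap\phi(\widehat{H}))+p(\widehat{H})$, which is then shown to be too small, contradicting the strengthened bound. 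Without first establishing Lemma~\ref{lem:key} and without this linearity argument, the individual reductions simply do not close.

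A related omission: the reduction in the paper is not generically ``identify two non-adjacent vertices.'' The critical move, inside the proof of Lemma~\ref{lem:key}, is to take an \emph{entire} low-potential proper subgraph $\widehat{H}$, color it by a homomorphism into $\widehat{K}^s_{10;3}$ that identifies at least two of its vertices (possible since $p(\widehat{H})\le 1$ forces $\widehat{H}$ to be small or a proper subgraph), and then delete any resulting extra edges down to a $\widehat{T}$-image. Lemma~\ref{lem:T} (rigidity of $\widehat{T}$) is what guarantees that this pruned image still forces the full target structure, so that colorability of $\widehat{G}^*$ would imply colorability of $\widehat{G}$. Your proposal treats the toolkit lemmas as extension tools only and does not recognize the role of $\widehat{T}$-rigidity in making the collapsed graph retain the obstruction. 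In short: the skeleton (minimum counterexample, structural reductions, discharging) matches, but the load-bearing wall --- the strengthened potential bound on all proper subgraphs and the linearity/pull-back computation --- is missing from your plan.
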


Throughout this section, we assume $\widehat{G}$ is a counterexample with minimum number of vertices to \Cref{thm:main}. That is to say first of all, $\widehat{G}$ is $10/3$-critical, secondly, $p(\widehat{G})\geq 0$, and thirdly, for any $10/3$-critical signed graph $\widehat{H}$ with $|V(\widehat{H})|<|V(\widehat{G})|$, if $\widehat{H}$ is isomorphic neither to $(K_4,-)$ nor $\widetilde{K}_2$, then $p(\widehat{H})\leq -1$.

Given a signed graph $\widehat{H}$, $P_2(\widehat{H})$ is the class of signed graphs obtained from $\widehat{H}$ by adding a new vertex and joining it to two vertices of $\widehat{H}$ with arbitrary signs. The following is a key lemma of the proof where we show all proper subgraphs of $\widehat{G}$ are even sparser than $\widehat{G}$ itself.  

\begin{lemma}\label{lem:key}
	Assume $\widehat{H}\subseteq \widehat{G}$. Then 
	\begin{itemize}
		\item[(I)] $p(\widehat{H})\ge 0$, if $\widehat{H} \cong \widehat{G}$;
		\item[(II)] $p(\widehat{H})\ge 1$, if $\widehat{G}\in P_2(\widehat{H})$ or 
		$\widehat{H}\cong \widehat{T}$;
		\item[(III)] $p(\widehat{H})\ge 2$, otherwise.
	\end{itemize}
\end{lemma}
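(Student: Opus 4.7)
My plan is to prove the three items in sequence, exploiting the defining properties of the minimum counterexample $\widehat{G}$.

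For item (I), the inequality is immediate: since $\widehat{G}$ is by assumption a counterexample to \Cref{thm:main}, $p(\widehat{G})\ge 0$, and thus any $\widehat{H}\cong \widehat{G}$ satisfies $p(\widehat{H})\ge 0$.

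For item (II), I would split into the two sub-cases. When $\widehat{H}\cong \widehat{T}$, \Cref{fig:TandT+} shows $|V(\widehat{T})|=5$ and $|E(\widehat{T})|=7$, so $p(\widehat{H})=3\cdot 5-2\cdot 7=1$. When $\widehat{G}\in P_2(\widehat{H})$, the definition of $P_2(\cdot)$ gives $|V(\widehat{G})|=|V(\widehat{H})|+1$ and $|E(\widehat{G})|=|E(\widehat{H})|+2$, so $p(\widehat{G})=p(\widehat{H})-1$; combining with $p(\widehat{G})\ge 0$ yields $p(\widehat{H})\ge 1$.

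For item (III), I would argue by contradiction: suppose $\widehat{H}\subsetneq \widehat{G}$ satisfies $\widehat{H}\not\cong \widehat{T}$, $\widehat{G}\notin P_2(\widehat{H})$, and $p(\widehat{H})\le 1$. Since $\widehat{G}$ is $10/3$-critical, $\widehat{H}$ admits a $10/3$-coloring $\phi$. The idea is to ``glue'' $\widehat{H}$ onto a copy of $\widehat{K}^s_{10;3}$ along $\phi$, identifying each $v\in V(\widehat{H})$ with $\phi(v)$ and carrying over the edges of $V(\widehat{G})\setminus V(\widehat{H})$ to the image vertices; because $\widehat{G}\not\to \widehat{K}^s_{10;3}$, the resulting signed graph $\widehat{G}'$ is not $10/3$-colorable and therefore contains a $10/3$-critical subgraph $\widehat{R}$ that is strictly smaller than $\widehat{G}$. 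By the minimality of $\widehat{G}$, either $\widehat{R}\in \{\widetilde{K}_2,(K_4,-)\}$ or $p(\widehat{R})\le -1$; in each alternative I would combine the supposed inequality $p(\widehat{H})\le 1$ with the vertex/edge counts of $\widehat{R}$ and the number of crossing edges between $\widehat{H}$ and $V(\widehat{G})\setminus V(\widehat{H})$ to derive a contradiction with either $p(\widehat{G})\ge 0$ or with the exclusion of the $\widehat{T}$ and $P_2$ configurations.

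The main obstacle is that a naive reduction can produce a critical residue $\widehat{R}$ whose potential does not immediately contradict $p(\widehat{G})\ge 0$. Closing the case requires, first, sufficient freedom in choosing $\phi$ so as to enforce favourable list sizes on the boundary of $\widehat{H}$ (which is exactly what the partial coloring toolkit in \Cref{lem:K2P3Coloring}, \Cref{lem:4-cycle}, \Cref{lem:K_{2,3}}, \Cref{lem:NegativeTirnagle} and \Cref{lem:2-vertex} is designed to provide), and second, a case analysis on the size and shape of $\widehat{R}$ using the classification of small critical signed graphs in \Cref{lem:Few_Vertex_Critical} together with the colorability of the specific six-vertex configurations of \Cref{lem:10/3-colorable_graphs}. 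The borderline cases $\widehat{H}\cong \widehat{T}$ and $\widehat{G}\in P_2(\widehat{H})$ are precisely those where the reduction cannot be pushed past the $p(\widehat{R})=-1$ threshold, which is exactly why they show up as the weaker clause (II) of the lemma.
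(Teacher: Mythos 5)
Items (I) and (II) of your proposal match the paper exactly and are correct. Item (III) has the right high-level shape (reduce $\widehat{G}$ to a strictly smaller signed graph by contracting/identifying part of $\widehat{H}$, invoke minimality to obtain a critical residue, count potentials), but as you acknowledge yourself, the closing step is not carried out, and the specific reduction you propose has concrete problems that the paper's version is engineered to avoid.

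First, the paper does not pick an arbitrary bad $\widehat{H}$: it chooses $\widehat{H}$ to be a \emph{maximal} proper subgraph satisfying $\widehat{G}\notin P_2(\widehat{H})$, $\widehat{H}\ncong\widehat{T}$, and $p(\widehat{H})\le 1$, and assumes $\widehat{H}$ induced. This maximality is the engine of the whole argument: whenever the counting produces a subgraph of $\widehat{G}$ strictly containing $\widehat{H}$ with potential $\le 1$, maximality forces that subgraph to be $\widehat{G}$ itself or to put $\widehat{G}\in P_2(\cdot)$, each of which is then ruled out by a one-line potential bound. Without maximality, finding a low-potential subgraph of $\widehat{G}$ containing $\widehat{H}$ does not contradict anything. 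Your proposal never introduces this.

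Second, your construction glues the whole of $\widehat{K}^s_{10;3}$ onto $\widehat{G}$, rather than replacing $\widehat{H}$ by its (small) image $\phi(\widehat{H})$. This causes two separate problems. (a) The resulting graph need not be strictly smaller than $\widehat{G}$ when $|V(\widehat{H})|\le 5$, so the induction hypothesis is not available; the paper instead proves (using \Cref{lem:3-colorable} when $\widehat{H}$ has $4$ or $5$ vertices, and the pigeonhole $|V(\widehat{H})|>5=|V(\widehat{K}^s_{10;3})|$ otherwise) that $\phi$ must identify at least two vertices, guaranteeing strict shrinkage. (b) Even when smaller, $\widehat{K}^s_{10;3}$ has $5$ vertices and $10$ non-loop edges, so an entire copy of it sitting in the critical residue $\widehat{R}$ contributes potential $-5$ and wrecks the counting; the paper solves this by keeping only $\phi(\widehat{H})$ and moreover \emph{deleting} any edges of $\phi(\widehat{H})$ beyond a single $\widehat{T}$-copy (defining $\widehat{G}^*$), which keeps $p(\widehat{G}'\cap\phi(\widehat{H}))\ge 1$ via \Cref{lem:T}. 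Finally, the two exceptional residues $\widetilde{K}_2$ and $(K_4,-)$ must be excluded by dedicated arguments (Claims~\ref{clm:digon} and~\ref{clm:K_4} in the paper), and those arguments again rely on the maximal choice of $\widehat{H}$. In short: correct skeleton, but the maximality of $\widehat{H}$, the forced non-injectivity of $\phi$, and the edge-trimming step in the definition of $\widehat{G}^*$ are the three missing ingredients that actually make the potential counting close.
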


\begin{proof}
	The statement (I) is the assumption on $\widehat{G}$.
	If $\widehat{G}\in P_2(\widehat{H})$, then $p(\widehat{H})=p(\widehat{G})-(3\times 1-2\times 2)=p(\widehat{G})+1\ge 1$. The potential value of $\widehat{T}$ is $3\times 5-2\times 7=1$.

	Suppose (III) is not true. Let $\widehat{H}$ be a maximal proper subgraph of $\widehat{G}$ satisfying $\widehat{G} \not\in P_2(\widehat{H})$, $\widehat{H}\ncong \widehat{T}$, and $p(\widehat{H})\leq 1$. Since adding an edge decreases the value of the potential, we may assume $\widehat{H}$ is an induced subgraph of $\widehat{G}$.

	Observe that $p(\widehat{K}_1)=3$, $p(\widehat{K}_2)=4$, $p(\widehat{K}_3)=3$. Assume $\widehat{H}$ has $4$ or $5$ vertices. Since $p(\widehat{H})\leq 1$, we have $E(\widehat{H})\leq 7$ and hence  by \Cref{lem:3-colorable}, $\widehat{H}$ maps to a negative triangle. If it has more than $6$ vertices, then since it is a proper subgraph, it maps to $K_{10;3}^{s}$. In both cases, we conclude that there exists a homomorphism $\phi$ of $\widehat{H}$ to $\widehat{K}^s_{10;3}$ which identifies at least two vertices. In other words, $|\phi(\widehat{H})|<|V(\widehat{H})|$.
	We extend $\phi$ to $\widehat{G}$ by taking it as an identity mapping on the rest of $\widehat{G}$.

	Define $\widehat{G}^*$ to be the signed graph obtained from $\phi(\widehat{G})$ as follows: 
	\begin{itemize}
		\item if $\phi(\widehat{H})$ contains no subgraph isomorphic to $\widehat{T}$, then $\widehat{G}^*=\phi(\widehat{G})$;
		\item if $\widehat{T}_{1}\subseteq \phi(\widehat{H})$ and $\widehat{T}_1\cong \widehat{T}$, then $\widehat{G}^*=\phi(\widehat{G})-E_d$, where $E_d=E(\phi(\widehat{H}))-E(\widehat{T}_{1})$.
	\end{itemize}
	
	In other words, after identifying,  the $\widehat{H}$-part of $\widehat{G}$ with a subgraph of  $\widehat{K}^{s}_{10;3}$, if the image of $\widehat{H}$ contains more edges than $\widehat{T}$, then we delete those extra edges.

	By \Cref{lem:T}, if $\widehat{G}^*\to \widehat{K}^s_{10;3}$, then $\phi(\widehat{G})\to \widehat{K}^s_{10;3}$, which would imply that $\widehat{G}$ is $10/3$-colorable. Hence $\widehat{G}^*\not\to \widehat{K}^s_{10;3}$.

	Since  $\widehat{G}^*\not\to \widehat{K}^s_{10;3}$, $\widehat{G}^*$ contains a $10/3$-critical graph $\widehat{G}'$. As $|V(\widehat{G}^*)|<|V(\widehat{G})|$, we have $|V(\widehat{G}')|<|V(\widehat{G})|$.

	\begin{claim}\label{clm:digon}
		$\widehat{G}'\ncong \widetilde{K}_2$.
	\end{claim}
	
	\noindent\emph{Proof of the claim.} Assume to the contrary, and suppose vertices $u, v$ induce a digon in $\widehat{G}^*$. Since there is no digon in $\widehat{G}$ and $\widehat{K}^s_{10;3}$, one of $u$ and $v$, say $u$, is in $\phi(\widehat{H})$, the other one is in $V(\widehat{G})-V(\widehat{H})$. Moreover, $v$ is adjacent to at least two vertices of $\widehat{H}$. So $$p(\widehat{G}[V(\widehat{H})+v])\leq p(\widehat{H})+3\times 1-2 \times 2=p(\widehat{H})-1<p(\widehat{H}) .$$
	
	By the choice of $\widehat{H}$,  either $\widehat{G}[V(\widehat{H})+v]$ is isomorphic to $\widehat{G}$ or $\widehat{G}\in P_2(\widehat{G}[V(\widehat{H})+v])$. The latter is impossible because $p(\widehat{G}[V(\widehat{H})+v])\leq p(\widehat{H})-1\leq 0$. Hence $\widehat{G}[V(\widehat{H})+v]=\widehat{G}$. Moreover, $d(v)=2$, as otherwise $p(\widehat{G}[V(\widehat{H})+v])\leq p(\widehat{H})-3\leq -2$. That means $\widehat{G}\in P_2(\widehat{H})$, which contradicts the assumption.

	\begin{claim}\label{clm:K_4}
		$\widehat{G}'\ncong (K_4,-)$.
	\end{claim}
	
	\noindent\emph{Proof of the claim.} Assume to the contrary that $\widehat{K}$ is a subgraph of  $\widehat{G}^*$ isomorphic to $(K_4,-)$. Since neither $\widehat{G}$ nor $\widehat{K}^s_{10;3}$ contains a subgraph isomorphic to $(K_4,-)$, we have $1\leq |V(\widehat{H}) \cap V(\widehat{K})|\leq 3$. Based on the size of $V(\widehat{H}) \cap V(\widehat{K})$, we consider three cases.
	\begin{itemize}
		\item[(1)] $|V(\widehat{H}) \cap V(\widehat{K})|=1$.
		
		In this case, $\widehat{K}\cap (\widehat{G}-\widehat{H})$  is a (negative) triangle, and hence there are at least three edges connecting $\widehat{H}$ and the triangle. Thus 
		$$p(\widehat{G}[V(H)\cup V(K_4)])\leq p(\widehat{H})+3\times 3-2\times 6=p(\widehat{H})-3\leq -2.$$

		\item[(2)] $|V(\widehat{H}) \cap V(\widehat{K})|=2$.
		
		The intersection $\widehat{K}\cap (\widehat{G}-\widehat{H})$ induces a $K_{2}$ which is connected to $\widehat{H}$ by at least four edges. Hence $$p(\widehat{G}[V(H)\cup V(K_4)])\leq p(\widehat{H})+3\times 2-2\times 5=p(\widehat{H})-4\leq -3.$$

		\item[(3)] $|V(\widehat{H}) \cap V(\widehat{K})|=3$.
		
		Similarly, noting that the  intersection $\widehat{K}\cap (\widehat{G}-\widehat{H})$ is a vertex connected to $\widehat{H}$ with at least three edges, we have
		$$p(\widehat{G}[V(H)\cup V(K_4)])\leq p(\widehat{H})+3\times 1-2\times 3=p(\widehat{H})-3\leq -2.$$ 
		
	\end{itemize}
	
	In all cases, we find a subgraph including $\widehat{H}$ with potential value at most $-2$. This either contradicts \Cref{lem:key}(I), or  \Cref{lem:key}(II), or the choice of $\widehat{H}$, hence completing the proof of Claim~\ref{clm:K_4}.
	\bigskip
	
	It follows from \Cref{clm:digon} and \Cref{clm:K_4} that the $10/3$-critical graph $\widehat{G}'$ satisfies all the conditions of Theorem~\ref{thm:main}. Since $|V(\widehat{G}')|<|V(\widehat{G})|$, we have $p(\widehat{G}')\leq -1$.
	
Since potential is a linear function, we have 	$$p(\widehat{G}[V(\widehat{G}'-\phi(\widehat{H}))+V(\widehat{H})])\leq p(\widehat{G}')-p(\widehat{G}'\cap \phi(\widehat{H}))+p(\widehat{H})\leq p(\widehat{G}')-1+p(\widehat{H})\leq p(\widehat{H})-2\leq -1, $$
	which again either contradicts  \Cref{lem:key}(I), or  \Cref{lem:key}(II), or the choice of $\widehat{H}$. This completes the proof.
\end{proof}

\begin{lemma}\label{lem:adding}
	Assume $\widehat{H}\subseteq\widehat{G}$ such that $|V(\widehat{H})|\leq |V(\widehat{G})|-2$. Let $\widehat{H}+e$ be a signed graph obtained from $\widehat{H}$ by adding an edge $e$. Then 
	\begin{itemize}
		\item either $\widehat{H}+e$ is $10/3$-colorable,
		\item or $\widehat{H}+e$ contains one of $(K_4,-)$, $\widetilde{K}_2$, $\widehat{T}^+$ as a subgraph.
	\end{itemize}
\end{lemma}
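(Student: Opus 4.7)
The plan is to argue by contradiction: assume $\widehat{H}+e$ is not $10/3$-colorable and contains none of $(K_4,-)$, $\widetilde{K}_2$, or $\widehat{T}^+$ (up to switching) as a subgraph, and to extract a $10/3$-critical subgraph $\widehat{C}\subseteq \widehat{H}+e$ to reach a contradiction via the potential function.

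First I would observe that, because $|V(\widehat{H})|\le |V(\widehat{G})|-2<|V(\widehat{G})|$, the signed graph $\widehat{H}$ is a proper subgraph of the $10/3$-critical $\widehat{G}$ and hence admits a $10/3$-coloring. Consequently, any $10/3$-critical subgraph $\widehat{C}$ of $\widehat{H}+e$ must use the new edge, so that $\widehat{C}-e\subseteq \widehat{H}\subseteq \widehat{G}$. By the contradiction hypothesis, $\widehat{C}$ is not (switching) isomorphic to any of $(K_4,-)$, $\widetilde{K}_2$, or $\widehat{T}^+$. Since $|V(\widehat{C})|\le|V(\widehat{H})|\le|V(\widehat{G})|-2$, the minimality of $\widehat{G}$ as a counterexample to \Cref{thm:main} yields $p(\widehat{C})\le -1$, and therefore $p(\widehat{C}-e)=p(\widehat{C})+2\le 1$.

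The remaining step is to apply \Cref{lem:key} to $\widehat{C}-e$, viewed as a subgraph of $\widehat{G}$. The possibility $\widehat{C}-e\cong \widehat{G}$ is excluded by $|V(\widehat{C}-e)|\le|V(\widehat{G})|-2$, and $\widehat{G}\in P_2(\widehat{C}-e)$ is likewise excluded, since it would force $|V(\widehat{G})|=|V(\widehat{C}-e)|+1\le |V(\widehat{G})|-1$. If $\widehat{C}-e\cong \widehat{T}$, then $\widehat{C}$ is a $10/3$-critical signed graph on five vertices, which by \Cref{lem:Few_Vertex_Critical} forces $\widehat{C}$ to be switching isomorphic to $\widehat{T}^+$, contradicting the assumption. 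The only remaining conclusion of \Cref{lem:key} is $p(\widehat{C}-e)\ge 2$, which contradicts the bound $p(\widehat{C}-e)\le 1$ obtained above.

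I do not foresee a serious obstacle: the argument reduces to a case analysis of \Cref{lem:key} driven by the vertex-count slack $|V(\widehat{H})|\le |V(\widehat{G})|-2$ and the classification of small critical signed graphs from \Cref{lem:Few_Vertex_Critical}. The only delicate point is guaranteeing that the extracted $\widehat{C}$ actually contains the edge $e$, which is where the hypothesis $|V(\widehat{H})|\le |V(\widehat{G})|-2$ (ensuring that $\widehat{H}$ is a proper, hence $10/3$-colorable, subgraph of $\widehat{G}$) is essentially used.
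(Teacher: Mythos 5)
Your proof is correct and follows essentially the same strategy as the paper's: extract a $10/3$-critical subgraph $\widehat{C}$ of $\widehat{H}+e$, observe that the vertex-count hypothesis forces $e\in\widehat{C}$ and rules out items (I), (II) of \Cref{lem:key} for $\widehat{C}-e$, dispose of the $\widehat{T}$-case via \Cref{lem:Few_Vertex_Critical}, and then derive the contradiction $p(\widehat{C})\ge 0$ from \Cref{lem:key}(III) together with the minimality of $\widehat{G}$. If anything, your write-up is slightly more careful than the paper's one-line treatment of the $\widehat{T}$ exception, since you correctly phrase the condition in terms of $\widehat{C}-e\cong\widehat{T}$ rather than $\widehat{H}\cong\widehat{T}$.
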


\begin{proof}
	Suppose, to the contrary, that $\widehat{H}+e$ is not $10/3$-colorable and that none of its $10/3$-critical subgraphs is isomorphic to $(K_4,-)$, $\widetilde{K}_2$, or $\widehat{T}^+$. Let $\widehat{H}'$ be a $10/3$-critical subgraph of $\widehat{H}+e$.

	
	If $\widehat{H}\cong \widehat{T}$, then we are done by \Cref{lem:Few_Vertex_Critical}. Since $|V(\widehat{H}')|\leq |V(\widehat{G})|-2$, by \Cref{lem:key}(III), we have $p(\widehat{H}'-e)\geq 2$. Therefore,  $p(\widehat{H}')\geq 0$, which contradicts the minimality of  $\widehat{G}$.
\end{proof}


\begin{lemma}\label{lem:NoPositiveK3}
	There is no positive triangle in $\widehat{G}$.
\end{lemma}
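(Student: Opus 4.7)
The plan is to argue by contradiction: assume $\widehat{G}$ contains a positive triangle $T$ on vertices $u,v,w$, and produce a $10/3$-coloring of $\widehat{G}$, contradicting criticality. First I would show each of $u,v,w$ has degree exactly $3$: if $d(u)=2$, then by criticality $\widehat{G}-u$ admits a $10/3$-coloring $\phi$, and \Cref{lem:counting_partial_coloringK3}, applied to the positive triangle $uvw$ with $v$ and $w$ already colored, supplies a nonempty list of valid extensions to $u$. Let $u',v',w'$ denote the remaining neighbors of $u,v,w$; they all lie outside $\{u,v,w\}$ because $\widehat{G}$ is simple.

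Next I would rule out the case $u'=v'=w'$. If all three equal a single vertex $x$, then subcubicity forces $d(x)=3$ and $\{u,v,w,x\}$ induces a $K_4$ with no edges to the rest of $\widehat{G}$. Since $\widehat{G}$ is $2$-connected (being $10/3$-critical), $\widehat{G}$ equals this $K_4$, giving $|V(\widehat{G})|=4$; by \Cref{lem:Few_Vertex_Critical} this forces $\widehat{G}\cong (K_4,-)$, which is excluded. In general $|V(\widehat{G})|\ge 6$, since the only other small $10/3$-critical graph in \Cref{lem:Few_Vertex_Critical} is $\widehat{T}^+$, which has potential $-1$ and so is not a counterexample.

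It remains to handle $|\{u',v',w'\}|\in\{2,3\}$. In both cases I would consider the proper subgraph $\widehat{G}^*=\widehat{G}-\{u,v,w\}$, which by criticality admits a $10/3$-coloring $\phi$. The values $\phi(u'),\phi(v'),\phi(w')$ together with the signs of the three external edges determine, via \Cref{obs:counting_partial_coloring-K2}, lists $L(u),L(v),L(w)$, each a cyclically consecutive interval of $5$ colors in $\mathbb{Z}_{10}$. Extending $\phi$ to $u,v,w$ reduces to choosing one color from each list so that all three lie pairwise within cyclic distance $2$, equivalently, inside some length-$3$ arc of $\mathbb{Z}_{10}$; any such triple forms a positive triangle in $K_{10;3}^s$ (same colors on positive edges are allowed because of the positive loops in $K_{10;3}^s$).

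The main content, and the only non-routine step, is an arc-intersection pigeonhole: among the $10$ length-$3$ arcs of $\mathbb{Z}_{10}$, each length-$5$ interval is missed by exactly $3$; so the set of arcs avoiding at least one of $L(u),L(v),L(w)$ has size at most $3+3+3=9$, leaving at least one arc that meets all three lists. Selecting one color from each list inside this arc extends $\phi$ to a valid $10/3$-coloring of $\widehat{G}$, contradicting $10/3$-criticality.
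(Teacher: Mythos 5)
Your proof contains a fundamental gap: you implicitly assume $\widehat{G}$ is subcubic, but it isn't. The graph $\widehat{G}$ is a minimum counterexample to \Cref{thm:main}, which asserts the potential bound for \emph{all} $10/3$-critical signed graphs, not just those of maximum degree~3. (Indeed, the final discharging section of the paper explicitly treats vertices of degree up to $7$ and beyond.) Your argument writes ``let $u',v',w'$ denote the remaining neighbors of $u,v,w$'' — one neighbor each — and then invokes ``subcubicity forces $d(x)=3$,'' both of which fail if any of $u,v,w$ has degree $4$ or more. In that case, after coloring $\widehat{G}-\{u,v,w\}$, the residual list at such a vertex is an intersection of several length-$5$ intervals and may have fewer than $5$ colors (possibly none), so the arc-intersection pigeonhole that is the core of your argument no longer applies. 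You correctly rule out degree~$2$ via \Cref{lem:counting_partial_coloringK3}, but you never rule out degree~$\geq 4$, and there is no reason to believe it can be ruled out at this stage of the induction.

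The paper's own proof avoids this entirely by contracting a positive edge of the triangle and appealing to the potential-function inductive hypothesis: one first shows that some contraction $\widehat{G}_{e}$ is digon-free and $(K_4,-)$-free (else the preimages would yield a subgraph of potential $\leq -1$, contradicting \Cref{lem:key}), then applies the minimality of $\widehat{G}$ to $\widehat{G}_{e}$ and pulls the resulting critical subgraph back through the contraction, again contradicting \Cref{lem:key}. This argument is degree-agnostic and hence works in full generality, at the cost of being considerably less elementary than your pigeonhole. Your pigeonhole step itself (a length-$5$ interval misses exactly $3$ of the $10$ length-$3$ arcs, so $3\cdot 3 = 9 < 10$ leaves a common arc; any three colors within a length-$3$ arc lie pairwise at cyclic distance $\leq 2$ and so span a positive triangle in $K_{10;3}^s$) is correct and pleasant, and would constitute a clean proof if one had already established subcubicity — but that is not available here.
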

\begin{proof}
	Towards a contradiction, assume $uvw$ is a positive triangle. Without loss of generality, assume all the edges of $uvw$ are positive. For a positive edge $xy$, let $\widehat{G}_{xy}$ be the signed graph obtained from $\widehat{G}$ by contracting the edge $xy$.
	
	We claim that one of $\widehat{G}_{uv}$, $\widehat{G}_{uw}$ and $\widehat{G}_{vw}$ has no digon. Suppose not, then there is a vertex $v'$ connected to $u$ and $w$ with different signs, $u'$ connected to $w$ and $v$ with different signs, and similarly, $w'$ connected to $u$ and $v$ with different signs. First we claim that $u'$, $v'$, $w'$ are distinct vertices. If not, by symmetries, assume $u'=v'$. Then the subgraph induced by $\{u',u,v,w\}$ is a $K_4$, but noting that $p(K_{4})=0$, this contradicts \Cref{lem:key}. Thus the subgraph induced by the six vertices is isomorphism to $\widehat{H}_1$ in \Cref{fig:10/3-colorable graphs}. However, $p(\widehat{H}_1)=0$, and by \Cref{lem:key}, we must have $\widehat{G}\cong \widehat{H}_1$. This is a contradiction because $\widehat{H}_1$ is 10/3-colorable by \Cref{lem:10/3-colorable_graphs}.
	
	By symmetries, we assume $\widehat{G}_{uv}$ contains no digon. We claim, furthermore, that $\widehat{G}_{uv}$ contains no subgraph  isomorphic to $(K_4,-)$. Let $\widehat{K}$ be a $(K_4,-)$ subgraph of $\widehat{G}_{uv}$. Since $\widehat{G}$ doesn't contain  $(K_4,-)$, we have $u*v\in V(\widehat{K})$. Depending on whether $w$ is a vertex of $\widehat{K}$ or not, we have two cases:
	
	$$p(\widehat{G}[(V(\widehat{K})-u*v)\cup \{u, v\}])\leq 3\times 5-2\times 8=-1, \text{ if } w\in V(\widehat{K});$$
	$$p(\widehat{G}[(V(\widehat{K})-u*v)\cup \{u, v, w\}])\leq 3\times 6-2\times 9=0, \text{ if } w\notin V(\widehat{K}).$$

	The former is impossible by lemma \ref{lem:key}. For the latter to be possible, first of all, equality must hold, hence there are only three edges connecting $(V(\widehat{K})-u*v)$ and $\{u, v, w\}$, secondly, we must have $\widehat{G}[(V(\widehat{K})-u*v)\cup \{u, v, w\}]=\widehat{G}$. However, the subgraph induced by $(V(\widehat{K})-u*v)\cup \{u, v, w\}$ is  $\widehat{H}_3$ in \Cref{fig:10/3-colorable graphs}, which is $10/3$-colorable.

	Therefore, there is a $10/3$-critical graph $\widehat{G}'$  in $\widehat{G}_{uv}$ that satisfies all the conditions of Theorem \ref{thm:main}. By the induction hypothesis, $p(\widehat{G}')\leq -1$. Observe that $u*v\in V(\widehat{G}')$. Again we consider two cases based on whether $w$ is a vertex of $\widehat{G}'$ or not. 

\begin{align*}
\text{If } w \in V(\widehat{G}'), \text{ then } &
\begin{aligned}[t]
p\big(\widehat{G}[V(G')-u*v+\{u, v\}]\big) 
&\leq p(\widehat{G}') - p(u*v) + p(uv) - 2 \\ 
&\leq p(G') - 1 \leq -2.
\end{aligned} \\[1ex]
\text{If } w \notin V(\widehat{G}'), \text{ then } &
\begin{aligned}[t]
p\big(\widehat{G}[V(G')-u*v+\{u, v, w\}]\big) 
&\leq p(\widehat{G}') - p(u*v) + p(uvw) \\ 
&\leq p(G') - 3 + 3 \times 3 - 2 \times 3 \leq -1.
\end{aligned}
\end{align*}

	In either of the cases, we get a subgraph of $\widehat{G}$ with potential at most $-1$, contradicting \Cref{lem:key}.
\end{proof}

\begin{lemma}\label{lem:PairTriangles}
	If an edge $uv$ is in two triangles, say $uvx$ and $uvy$, then there exists a vertex $w$ adjacent to both $x$ and $y$, such that the subgraph induced by $\{x,y,u,v,w\}$ is isomorphic to $\widehat{T}$.
\end{lemma}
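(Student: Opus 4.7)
The strategy is a reduction by vertex identification following the pattern of Section~3. By Lemma~\ref{lem:NoPositiveK3} both triangles $uvx$ and $uvy$ are negative, so after a suitable switching I may take the canonical signs $\sigma(uv)=\sigma(ux)=\sigma(vy)=+$ and $\sigma(vx)=\sigma(uy)=-$. Since $\Delta(\widehat{G})\le 3$, this forces $d(u)=d(v)=3$ with $N(u)=\{v,x,y\}$ and $N(v)=\{u,x,y\}$. Moreover, $xy\notin E(\widehat{G})$: otherwise $\{u,v,x,y\}$ would induce a $K_4$ all of whose four triangles are negative (by Lemma~\ref{lem:NoPositiveK3}), hence switching equivalent to $(K_4,-)$; by $10/3$-criticality and $\widehat{G}\not\cong(K_4,-)$ this is impossible.

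Assume for contradiction that no vertex $w$ adjacent to both $x$ and $y$ completes an induced copy of $\widehat{T}$. Switch at $y$ so that the signs on $yu,yv$ match those on $xu,xv$, identify $x$ and $y$ into a new vertex $z$, and delete same-sign parallel edges, producing a signed graph $\widehat{G}_z$ on $|V(\widehat{G})|-1$ vertices. A digon in $\widehat{G}_z$ arises exactly when $x,y$ share a common neighbour $w$ with $\sigma(xw)=\sigma(yw)$---precisely the excluded $\widehat{T}$ configuration---so $\widehat{G}_z$ is simple. A direct edge count yields $p(\widehat{G}_z)\ge p(\widehat{G})+1\ge 1$, and the constraint $N(u)=\{v,x,y\}$ rules out any $(K_4,-)$ in $\widehat{G}_z$ containing $z$. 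Any $10/3$-coloring $\phi_z$ of $\widehat{G}_z$ lifts to one of $\widehat{G}$ by setting $\phi(x)=\phi_z(z)$, $\phi(y)=-\phi_z(z)$ (the antipode, undoing the switching at $y$), and $\phi(t)=\phi_z(t)$ otherwise; since $\phi(x)$ and $\phi(y)$ are antipodal, all five edges at $u,v$ are automatically respected. It thus suffices to show that $\widehat{G}_z$ is $10/3$-colorable.

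If not, $\widehat{G}_z$ contains a $10/3$-critical subgraph $\widehat{G}''$ which, by the minimality of $\widehat{G}$, is $(K_4,-)$, $\widetilde{K}_2$, or satisfies $p(\widehat{G}'')\le -1$; the first two are ruled out by the above (the case $\widehat{G}_z\cong(K_4,-)$ would give $|V(\widehat{G})|=5$, forcing $\widehat{G}\cong\widehat{T}^+$ via Lemma~\ref{lem:Few_Vertex_Critical} with $p=-1<0$). In the remaining case, lifting $\widehat{G}''$ along $\pi^{-1}$ to $\widehat{H}\subseteq\widehat{G}$ gives $p(\widehat{H})=p(\widehat{G}'')+3-2k_2$, with $k_2=|N_{\widehat{G}''}(z)\cap\{u,v\}|$; for $k_2\in\{1,2\}$ the bound contradicts Lemma~\ref{lem:key} directly. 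The delicate case is $k_2=0$, where the $2$-connectivity of $\widehat{G}''$ forces $u,v\notin V(\widehat{G}'')$ and $z$ to be adjacent in $\widehat{G}''$ exactly to the third neighbours $x',y'$ of $x,y$. I would then consider $\widehat{G}'''=\widehat{G}''-z\subseteq\widehat{G}-\{u,v,x,y\}\subseteq\widehat{G}$: removing the degree-$2$ vertex $z$ shifts the potential by $-3+2\cdot 2=+1$, giving $p(\widehat{G}''')\le 0$. But $|V(\widehat{G}''')|\le |V(\widehat{G})|-4$ precludes $\widehat{G}\in P_2(\widehat{G}''')$, and $p(\widehat{T})=1>0\ge p(\widehat{G}''')$ precludes $\widehat{G}'''\cong\widehat{T}$, so Lemma~\ref{lem:key}(III) forces $p(\widehat{G}''')\ge 2$, a contradiction.
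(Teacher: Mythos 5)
Your overall strategy is the same as the paper's: identify $x$ and $y$ (after a suitable switching), argue that the quotient $\widehat{G}_z$ contains neither a digon nor $(K_4,-)$, find a $10/3$-critical subgraph of the quotient with potential $\leq -1$, and lift it back to a subgraph of $\widehat{G}$ that violates Lemma~\ref{lem:key}. However, the proposal rests on a false premise that breaks the argument.

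\textbf{The critical error.} You assume $\Delta(\widehat{G})\le 3$ and deduce ``$N(u)=\{v,x,y\}$, $N(v)=\{u,x,y\}$.'' But in Section~3 the graph $\widehat{G}$ is a minimum counterexample to Theorem~\ref{thm:main}, an arbitrary $10/3$-critical signed graph; there is no maximum-degree hypothesis. Degree-3 is what we are eventually trying to \emph{use} (via the density bound of Theorem~\ref{thm:10/3Critical}) to get Theorem~\ref{thm:BrooksDelat=3}; it cannot be assumed while proving the density bound. Lemmas~\ref{lem:TwoNeighborCount}, \ref{lem:Delta4}, and \ref{lem:Counting_r(x)} and the whole discharging section exist precisely because $\widehat{G}$ can have vertices of degree $4,5,6,\dots$ This bad assumption propagates: it is what you use to rule out a $(K_4,-)$ in $\widehat{G}_z$ containing $z$, to force $u,v\notin V(\widehat{G}'')$ in the $k_2=0$ case, and to conclude that $z$ has degree $2$ in $\widehat{G}''$ (so that deleting it raises the potential by exactly $+1$). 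None of these steps survive when $u,v,x,y$ may have higher degree. The paper instead rules out $(K_4,-)$ in $\widehat{G}_{xy}$ by a three-way potential count on $|\{u,v,x*y\}\cap V(\widehat{K})|$, and concludes with a single potential estimate over $U=V(\widehat{G}')\cap\{u,v,x*y\}$; neither needs any degree assumption.

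\textbf{A secondary (fixable) gap.} You assert that a common neighbour $w$ of $x,y$ with $\sigma(xw)=\sigma(yw)$ is ``precisely the excluded $\widehat{T}$ configuration.'' The cycle-sign check indeed shows $\widehat{T}$ appears as a subgraph of $\widehat{G}[\{u,v,x,y,w\}]$ with the right signs, but the lemma requires the \emph{induced} subgraph on those five vertices to be $\widehat{T}$. You must additionally exclude extra edges such as $uw$ or $vw$; this follows from Lemma~\ref{lem:key} because they would push the potential of $\widehat{G}[\{u,v,x,y,w\}]$ down to $\le -1$, but it needs to be said.

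Finally, the parenthetical ``the case $\widehat{G}_z\cong(K_4,-)$ would give $|V(\widehat{G})|=5$'' conflates $\widehat{G}_z$ with its critical subgraph $\widehat{G}''$; what needs to be excluded is $\widehat{G}''\cong(K_4,-)$, which carries no information about $|V(\widehat{G})|$.
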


\begin{proof}
	Suppose there is no such a vertex $w$. It follows from \Cref{lem:NoPositiveK3} that $uvx$ and $uvy$ are negative triangles. Hence we may assume all edges of $uvx$ and $uvy$ are negative. By the potential value of  the pair of  adjacent triangles, $xy\notin E(\widehat{G})$.   Let $\widehat{G}_{xy}$ be the resulting graph after identifying $x$ and $y$.
	Since $w$ does not exist, $\widehat{G}_{xy}$ has no digon. 
	
	First we show that $\widehat{G}_{xy}$ does not contain any subgraph which is isomorphic to $(K_4,-)$. In fact, we show there is no $K_4$ in the underlying graph of $\widehat{G}_{xy}$. 
	Observe that, by \Cref{lem:key}, $G$ contains no $K_4$. Thus $x*y\in V(\widehat{K})$, where $\widehat{K}$ is a signed $K_4$ in $\widehat{G}_{xy}$. Based on the size of $\{u,v ,x*y\}\cap V(\widehat{K})$, we consider three cases.
	
	\begin{itemize}
		\item[(1)] $|\{u, v, x*y\}\cap V(\widehat{K})|=1$.
		There is a  triangle in $\widehat{K}\cap (\widehat{G}-\{u, v, x, y\})$, and hence, in $\widehat{G}$, there are at least three edges connecting $\{x,y\}$ and the triangle. Thus 
		$$p(\widehat{G}[(V(\widehat{K})-x*y)\cup \{u, v, x, y\}])\leq 3\times 7-2\times 11=-1,$$ which  contradicts   \Cref{lem:key}.
		
		\item[(2)]  $|\{u, v, x*y\}\cap V(\widehat{K})|=2$. 
		In this case, $\widehat{K}\cap (\widehat{G}-\{u, v, x, y\})$ is an edge, and hence there are at least four edges connecting it to $\{u,v, x,y\}$. Thus
		$$p(\widehat{G}[(V(\widehat{K})-x*y)\cup \{u, v, x, y\}])\leq 3\times 6-2\times 10=-2,$$ which again contradicts  \Cref{lem:key}.
		\item[(3)] $|\{u, v, x*y\}\cap V(\widehat{K})|=3$.
		The size of the intersection implies that $\widehat{G}[\{u, v, x, y\}\cup (V(\widehat{K})-x*y)]$ contains a signed $K_4$, which also contradicts  \Cref{lem:key}.
	\end{itemize}
	
	Since $\widehat{G}_{xy}$ contains neither a $(K_4,-)$ nor a digon, the signed graph $\widehat{G}_{xy}$ contains a $10/3$-critical signed graph $\widehat{G}'$ which satisfies all the conditions of \Cref{thm:main}. By induction hypothesis, $p(\widehat{G}')\leq -1$. Since $\widehat{G}$ is $10/3$-critical,  $x*y\in V(\widehat{G}')$. 
	
	Let $U=V(\widehat{G}')\cap \{u,v,x*y\}$. The graph induced by $U$ is either $K_1$ or $K_2$ or $K_3$. Hence $p(\widehat{G}'[U])$  is either $3$ or $4$. We observe: $$p(\widehat{G}[V(\widehat{G}')-U+\{u,v,x,y\}])\leq p(\widehat{G}')-p(\widehat{G}'[U])+3\times 4-2\times 5 \leq p(\widehat{G}')-3+2\leq -2, $$
	which contradicts \Cref{lem:key}. 
\end{proof}

To proceed with the discharging method, it is necessary to examine the local structure of low-degree vertices. A \emph{$k$-vertex} is a vertex of degree $k$. A \emph{$k^+$-vertex} (respectively, \emph{$k^-$-vertex}) is a vertex of degree at least (respectively, at most) $k$. A \emph{$k$-neighbor} of a vertex $v$ is a neighbor of $v$ which is of degree $k$. Similarly, we use $k^+$-neighbor and $k^-$-neighbor to refer to neighbors of degree at least or at most $k$, respectively. A \emph{$(k,d)$-edge} is an edge connecting a $k$-vertex and a $d$-vertex.

Our first set of forbidden configurations is as follows.

\begin{lemma}\label{lem:TwoNeighborCount}
	Every $9^-$-vertex has at least one $3^+$-neighbor. Moreover, every $5^-$-vertex has at least two $3^+$-neighbors. 
\end{lemma}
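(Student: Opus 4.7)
The plan is to argue by contradiction in both parts, using 10/3-criticality to obtain a 10/3-coloring of $\widehat{G}-v$ and then extending it to $v$ by independently recoloring each of its 2-vertex neighbors. A counting of the ``bad'' colors at $v$ contributed by each neighbor then leaves a legal color for $v$ whenever $d(v)$ is below the claimed threshold.

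For the first claim, suppose $v$ has degree $d\le 9$ and every neighbor $u_1,\dots,u_d$ has degree $2$, and fix a 10/3-coloring $\phi$ of $\widehat{G}-v$ viewed as an edge-sign preserving map to $K_{10;3}^s$. For each $u_i$ whose ``other neighbor'' $w_i$ does not lie in $N(v)$, the list of legal recolorings $L(u_i)=N^{\sigma(u_iw_i)}(\phi(w_i))$ is a consecutive 5-interval of colors, by \Cref{obs:counting_partial_coloring-K2}. The key arithmetic observation is that, for a putative color $c$ at $v$, the intersection $L(u_i)\cap N^{\sigma(vu_i)}(c)$ is empty exactly when these two 5-intervals are complementary, which pins down one bad value of $c$. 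If instead $w_i=u_j$, then $vu_iu_j$ is a triangle, negative by \Cref{lem:NoPositiveK3}, and the joint recoloring of $u_i,u_j$ reduces to locating a negative triangle through $c$ in $K_{10;3}^s$; since $N^-(c)$ is a 5-interval containing pairs at cyclic distance at least $3$, such a triangle always exists and contributes zero bad values. Summing yields at most $d\le 9$ bad colors at $v$, leaving a good $c$ that extends to a 10/3-coloring of $\widehat{G}$, a contradiction.

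For the second claim, the first part of the lemma forces $v$ to have at least one $3^+$-neighbor, so assume $v$ has exactly one such neighbor $u_1$, while $u_2,\dots,u_d$ are $2$-vertices and $d\le 5$. I would again 10/3-color $\widehat{G}-v$, but freeze $\phi(u_1)$ since its recoloring is not under control. Freezing forbids the $5$ colors outside $N^{\sigma(vu_1)}(\phi(u_1))$; each $2$-neighbor $u_i$ ($i\ge 2$) not in a triangle through $v$ still contributes at most one bad $c$, and a triangle $vu_iu_j$ with $i,j\ge 2$ still contributes zero. The new configuration is a triangle $vu_1u_i$ with $u_i$ a $2$-vertex, in which $\phi(u_i)$ must lie in $N^{\sigma(vu_i)}(c)\cap N^{\sigma(u_1u_i)}(\phi(u_1))$; this intersection is empty for exactly one additional value of $c$, namely the antipode of $\phi(u_1)$. \Cref{lem:PairTriangles} prevents $vu_1$ from lying in two such triangles, since a second one would force a vertex $w$ adjacent to two $2$-vertices whose only neighbors are $v$ and $u_1$. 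Summing, the bad set has size at most $5+1+(d-2)=d+4\le 9$, so a good color survives and again contradicts criticality.

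The main technical obstacle in both halves is the bookkeeping for triangles incident to $v$: a $2$-neighbor lying in such a triangle does not contribute the generic ``one bad color,'' and in the second part the triangle passing through the frozen vertex $u_1$ must be counted separately. \Cref{lem:NoPositiveK3} ensures that every such triangle is negative, so the relevant neighborhoods in $K_{10;3}^s$ behave uniformly, and \Cref{lem:PairTriangles} bounds the number of triangles through the edge $vu_1$. Once these points are pinned down, the clean bounds $d\le 9$ and $d+4\le 9$ deliver the two contradictions.
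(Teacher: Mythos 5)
Your proof is correct and takes essentially the same route as the paper: fix a $10/3$-coloring of $\widehat{G}$ with $v$ and its $2$-neighbors removed (keeping the unique $3^+$-neighbor colored in the second part), and count the colors forbidden at $v$ via \Cref{lem:2-vertex} and \Cref{obs:counting_partial_coloring-K2} to locate a surviving color. The extra bookkeeping around triangles through $v$ and the appeal to \Cref{lem:PairTriangles} are harmless but unnecessary for the count, since the bound $5+(d(v)-1)\le 9$ holds regardless of how many triangles pass through $vu_1$.
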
 
\begin{proof}
	
	Assume $v$ is a $9^-$-vertex and all the neighbors of $v$ are the degree two. Let $\phi$ be a $10/3$-coloring of  $\widehat{G}'=\widehat{G}-N_{\widehat{G}}[v]$. By criticality, such a coloring exists. Let $X=N_{\widehat{G}}(v)$. Assume $x\in X$ and $N_{G}(x)=\{v,u\}$. By \Cref{lem:2-vertex}, the color $\phi(u)$  forbids only one color ($\phi(u)$ or $-\phi(u)$) on the list of $v$. With respect to the coloring of $N_{\widehat{G}'}(X)$, the number of permissible colors for $v$ is at least $10-d_{G}(v)>0$. Therefore, the coloring $\phi$ can be extended to $\widehat{G}$.
	
	Assume $v$ is a $5^-$-vertex and $v$ has at most one $3^+$-neighbor. Let $X=\{x\mid xv\in E(\widehat{G}), d(x)=2\}$. Then $|X|\geq d(v)-1$. Let $\phi$ be a $10/3$-coloring of  $\widehat{G}'$, where $\widehat{G}'=\widehat{G}-X-v$. 
	By \Cref{obs:counting_partial_coloring-K2} and \Cref{lem:2-vertex}, with respect to the coloring of $3^+$-neighbor of $v$ and $N_{\widehat{G}'}(X)$, the number of permissible colors for $v$ is at least $10-(d_{G}(v)-1)-5=6-d_G(v)>0$. In other words, $v$ has at least one available color, and hence, the coloring $\phi$ can be extended to $G$.
\end{proof}

A $3$-vertex with a 2-neighbor is called  \emph{poor}. We say a vertex $x$ of $\widehat{G}$ is \emph{rich} if either $d_{G}(x)=4$ and $x$ has at most one 2-neighbor or $d_{G}(x)\geq 5$. We call the pairs $\{v_{1}, v_{3}\}$ and $\{v_{1}, v_{4}\}$ in a copy of $\widehat{T}$ \emph{pseudo-neighbors} (noting that they cannot be adjacent because of criticality).

\begin{lemma}\label{lem:3_vertex_neighbors}
	If $uv$ is a $(3,2)$-edge of $\widehat{G}$ with $N(u)=\{v,x,y\}$, $N(v)=\{u,z\}$, then $z\notin \{x,y\}$. Moreover, either $xy\in E(\widehat{G})$ or there are vertices $w_1,w_2$ such that  $\{u, x, y, w_1, w_2\}$ induces a copy of $\widehat{T}$ and $z\notin \{w_1, w_2\}$.
\end{lemma}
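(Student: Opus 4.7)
The plan is to prove the two assertions separately, each exploiting the criticality of $\widehat{G}$. For the first, suppose by contradiction $z = x$ (the case $z = y$ is symmetric). Then $uvx$ is a triangle, which is negative by \Cref{lem:NoPositiveK3}. After a suitable switching we may assume $\sigma(uv) = \sigma(vx) = +$ and $\sigma(ux) = -$. By criticality, $\widehat{G} - v$ admits a $10/3$-coloring $\phi$. Extension to $v$ requires $N^{+}(\phi(u)) \cap N^{+}(\phi(x)) \neq \emptyset$, equivalently $d(\phi(u), \phi(x)) \le 4$. Combined with $d(\phi(u), \phi(x)) \ge 3$ forced by the edge $ux$, the only obstruction is $\phi(u) = -\phi(x)$. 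In that case we recolor $u$: the available list $L(u) = N^{-}(\phi(x)) \cap N^{\sigma(uy)}(\phi(y))$ is an intersection of two sets of $5$ consecutive colors in $\mathbb{Z}_{10}$, and it contains $-\phi(x)$, which lies at the center of $N^{-}(\phi(x))$. An elementary interval-overlap check then yields $|L(u)| \ge 3$; every element of $L(u) \setminus \{-\phi(x)\}$ lies at cyclic distance $3$ or $4$ from $\phi(x)$, so replacing $\phi(u)$ by such an element gives a coloring that extends to $v$, contradicting the non-colorability of $\widehat{G}$.

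For the second assertion, suppose $xy \notin E(\widehat{G})$. Set $\sigma_0 = \sigma(ux)\,\sigma(uy)$, so that the triangle $uxy$ together with the added edge $xy^{\sigma_0}$ is positive; apply \Cref{lem:adding} to $\widehat{G}^* = (\widehat{G} - \{u, v\}) + xy^{\sigma_0}$, which has $|V(\widehat{G})| - 2$ vertices and maximum degree $3$. Either $\widehat{G}^*$ is $10/3$-colorable, or it contains $(K_4, -)$, $\widetilde{K}_2$, or $\widehat{T}^+$. The digon $\widetilde{K}_2$ would require $xy \in E(\widehat{G})$, contradicting our assumption; $\widehat{T}^+$ cannot embed into a subcubic graph since it contains a vertex of degree $4$; and when $\sigma_0 = +$ the added positive edge cannot appear in any $(K_4, -)$, so such a copy would lie entirely within $\widehat{G}$ and contradict \Cref{lem:key} (the potential $p((K_4,-)) = 0$ violates each of its three alternatives). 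Hence either $\widehat{G}^*$ is $10/3$-colorable, or $\sigma_0 = -$ and some $(K_4, -) \subseteq \widehat{G}^*$ uses the added edge $xy^-$.

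In the colorable sub-case, let $\phi$ be a $10/3$-coloring of $\widehat{G}^*$. The sign choice $\sigma_0$ guarantees, via the distance-size dependence of intersections of $5$-consecutive sets, that $|N^{\sigma(ux)}(\phi(x)) \cap N^{\sigma(uy)}(\phi(y))| \ge 3$. Since extension to $v$ is blocked by at most one ``bad'' value of $\phi(u)$ (namely $\pm \phi(z)$, determined by $\sigma(uv)\sigma(vz)$), pick any non-bad color for $u$ and extend to $v$, contradicting criticality. In the other sub-case, $(K_4, -)$ lies on some vertex set $\{x, y, p, q\}$ and uses $xy^-$, so $xp, xq, yp, yq, pq$ are all negative edges of $\widehat{G}$, while $xy, up, uq \notin E(\widehat{G})$. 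The induced subgraph of $\widehat{G}$ on $\{u, x, y, p, q\}$ then has underlying graph $K_5 \setminus \{xy, up, uq\}$, matching $\widehat{T}$; a switching at $\{y, p\}$ converts its edges into a positive $5$-cycle $u$-$x$-$p$-$q$-$y$-$u$ with negative diagonals $xq$ and $py$, so the subgraph is switching-equivalent to $\widehat{T}$, with $u$ in the role of $v_1$ and $w_1 = p$, $w_2 = q$. The $(K_4, -)$ structure together with the maximum-degree-$3$ constraint forces each of $p, q$ to have its three neighbors within $\{x, y, p, q\}$, so neither is adjacent to $v$; thus $z \notin \{w_1, w_2\}$. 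The main obstacle is orchestrating the sign choice $\sigma_0$ to exclude the forbidden subgraphs case-by-case, and performing the switching calculation that identifies the induced $\widehat{T}$.
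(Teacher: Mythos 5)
Your argument for the first assertion ($z\notin\{x,y\}$) is correct and close in spirit to the paper's: both rely on the observation that the only obstruction to extending a coloring to $v$ is $\phi(u)=-\phi(z)$, and both recolor $u$ within a set of size at least $3$ of available colors.

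The second assertion, however, rests on a false premise and therefore has genuine gaps. You assert that $\widehat{G}^*=(\widehat{G}-\{u,v\})+xy^{\sigma_0}$ ``has maximum degree $3$.'' But the $\widehat{G}$ of this section is a minimum-vertex counterexample to \Cref{thm:main}, which carries no degree hypothesis at all; only $u$ (degree $3$) and $v$ (degree $2$) are constrained, while $x,y,z$ and every other vertex may have arbitrary degree. This error feeds two of your three steps. First, you rule out $\widehat{T}^+\subseteq\widehat{G}^*$ by saying $\widehat{T}^+$ has a degree-$4$ vertex and cannot embed in a subcubic graph; since $\widehat{G}^*$ need not be subcubic, this argument does not apply. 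The paper instead notes that if $\widehat{T}^+$ appears, it must use $xy$, and then the $6$-vertex set $V(\widehat{T}^+)\cup\{u\}$ carries at least $9$ edges of $\widehat{G}$, giving $p(\widehat{H})\leq 0$ for a proper subgraph $\widehat{H}\not\cong\widehat{T}$ not containing $v$, contradicting \Cref{lem:key}. Second, your proof that $z\notin\{w_1,w_2\}$ again invokes ``the maximum-degree-$3$ constraint'' to conclude that $p,q$ have no neighbors outside $\{x,y,p,q\}$; with no degree bound on $p,q$ this is unjustified. The paper handles $z=w_1$ by computing $p(\widehat{G}[\{u,v,x,y,w_1,w_2\}])\leq 0$, invoking \Cref{lem:key} to force $\widehat{G}$ to equal that $6$-vertex graph, and then recognizing it as $\widehat{H}_4$ or $\widehat{H}'_4$ of \Cref{fig:10/3-colorable graphs}, which \Cref{lem:10/3-colorable_graphs} shows to be $10/3$-colorable.

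There is also a separate mistake in your treatment of $(K_4,-)$: you claim that when $\sigma_0=+$ the added positive edge cannot appear in any $(K_4,-)$ and conclude that only the case $\sigma_0=-$ with $xy^-$ inside an all-negative $K_4$ needs consideration. But throughout the paper (and in \Cref{lem:adding}) ``contains $(K_4,-)$ as a subgraph'' means up to switching, i.e.\ a signed $K_4$ in which all four triangles are negative; individual edges may have either sign. A $(K_4,-)$ on $\{x,y,p,q\}$ using the edge $xy^+$ is perfectly possible, and you have simply dropped that case. (It can in fact be reduced to your $\sigma_0=-$ case by switching so that the copy of $K_4$ is literally all-negative, after which your $K_5\setminus\{xy,up,uq\}$ and the switching calculation at $\{y,p\}$ do identify the induced $\widehat{T}$ correctly, but the reduction needs to be stated rather than dismissed as impossible.) In short, the colorable sub-case and the $\widehat{T}$-identification are handled in essentially the same way as the paper, but both the exclusion of $\widehat{T}^+$ and the exclusion of $z\in\{w_1,w_2\}$ require the potential arguments of \Cref{lem:key}, not a degree bound on $\widehat{G}$ that is not available at this point in the proof.
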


\begin{proof}
	Without loss of generality, noting that every triangle must be negative (\Cref{lem:NoPositiveK3}), we assume that all edges incident to  $u$ or $v$ are negative. 
	
	Towards a contradiction, suppose $z=y$.  By \Cref{lem:2-vertex}, it suffices to prove that there exists a mapping $\phi$ from $\widehat{G}-v$ to $K_{10;3}^s$ with $\phi(u)\neq -\phi(z)$. Since $\widehat{G}-v$ is still $10/3$-colorable, there exists a $10/3$-coloring $\psi$ of $\widehat{G}-\{u, v\}$ such that $N^-_{K_{10;3}^s}(\psi(z))\cap N^-_{K_{10;3}^s}(\psi(x))\neq \emptyset$. Moreover, if the number of  colors in this intersection is exactly one, then that color is at distance three from both $\psi(z)$ and $\psi(x)$. There is a choice $c$ of $u$ such that $c\in N^-_{K_{10;3}^s}(\psi(z))\cap N^-_{K_{10;3}^s}(\psi(x))$ and $c \neq -\psi(z)$. The coloring $\psi$ together with $c$ is the required coloring $\phi$ of $\widehat{G}-v$. Therefore, the neighbors of $u$ and $v$ are distinct vertices.

	Suppose $xy\notin E(\widehat{G})$. Let $\widehat{G}'=\widehat{G}-\{u,v\}+xy$ assigning the positive sign to $xy$. If $\widehat{G}'$ contains a $(K_4,-)$, then the pair $x, y$ should be two vertices of $(K_4,-)$. Let $w_1,w_2$ be the other two vertices of this $(K_4,-)$. Then $\{u, x, y, w_1, w_2\}$ induces a copy of $\widehat{T}$.
	
	If $\widehat{G}'$ contains a $\widehat{T}^+$, then, since $\widehat{T}^+$  is not a subgraph of $G$, $xy$ must be in the $\widehat{T}^+$. The potential value of the subgraph induced by $V(\widehat{T}^+)\cup u$ is $3\times 6 -2\times 9=0$, which, noting that $v$ is not in the set $V(\widehat{T}^+)\cup u$, contradicts \Cref{lem:key}.
	
	By \Cref{lem:adding}, there exists a mapping $\phi$ from $\widehat{G}'$ to $K_{10;3}^s$. By \Cref{lem:counting_partial_coloringK3}, $|N^-_{K_{10;3}^s}(\phi(x))\cap N^-_{K_{10;3}^s}(\phi(y))|\ge 3$. Combined with \Cref{lem:2-vertex}, $z$ forbids only one color (namely $-\phi(z)$) on $u$. Thus we have at least two choices for extending $\phi$ on $u$ and $v$. This contradicts the choice of $\widehat{G}$.
	
	Assume $z=w_1$, then $p(\widehat{T}+v)=3\times 6-2\times9=0$. By \Cref{lem:key}, $\widehat{G}=\widehat{G}[u,v,x,y,w_1,w_2]$. However, $\widehat{G}\cong \widehat{H}_4$ or $\widehat{G}\cong \widehat{H}'_4$, where $\widehat{H}_4$ and $\widehat{H}'_4$ are depicted in \Cref{fig:10/3-colorable graphs}, and by \Cref{lem:10/3-colorable_graphs}, $\widehat{G}$ is 10/3-colorable. This completes the proof.
\end{proof}

\begin{lemma}\label{lem:32-edge}
	If $uv$ is a $(3,2)$-edge of $\widehat{G}$ with $N(u)=\{v,x,y\}$, $N(v)=\{u,z\}$,  then at least two of the neighbors and pseudo-neighbors of $u$  are rich. 
\end{lemma}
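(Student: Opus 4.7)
The plan is a proof by contradiction. Suppose at most one of the neighbors and pseudo-neighbors of $u$ is rich. Since $v$ has degree $2$ it is never rich, so by Lemma~\ref{lem:3_vertex_neighbors} we are in one of two situations: either $xy\in E(\widehat{G})$ and at most one of $\{x,y\}$ is rich, or $\{u,x,y,w_1,w_2\}$ induces a copy of $\widehat{T}$ and at least three of $\{x,y,w_1,w_2\}$ are non-rich (the fourth conclusion of that lemma).

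First I would handle the case $xy\in E(\widehat{G})$. By Lemma~\ref{lem:NoPositiveK3} the triangle $uxy$ is negative, so a $10/3$-coloring $\phi$ of $\widehat{G}-\{u,v\}$, which exists by criticality, gives $u$ a list $L(u)$ whose size is $5,2,1,$ or $0$ according to the cyclic distance between $\phi(x)$ and $\phi(y)$ in $K_{10;3}^s$. Unless $\phi$ lies in the narrow band of colorings where $L(u)$ is empty or a single color that is simultaneously blocked at $v$ by $z$ via Lemma~\ref{lem:2-vertex}, extension to $u$ and then $v$ succeeds, contradicting criticality. To kill the remaining bad $\phi$ I would use that a non-rich endpoint of $\{x,y\}$ has a $2$-neighbor other than $u$: a local recoloring along the short path through that $2$-neighbor, using Observation~\ref{obs:counting_partial_coloring-K2} and Lemma~\ref{lem:K2P3Coloring}(a), shifts $\phi(x)$ or $\phi(y)$ out of the narrow band without violating any other edge.

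In the $\widehat{T}$ case, Lemma~\ref{lem:T} tells us that every homomorphism of the induced $\widehat{T}$ to $\widehat{K}_{10;3}^s$ is rigid up to an automorphism of $\widehat{K}_{10;3}^s$, so the partial coloring of $\{x,y,w_1,w_2\}$ is determined up to switching in any $10/3$-coloring of $\widehat{G}-u$. Here I would delete $u$, $v$ together with a $2$-neighbor of some non-rich vertex in $\{x,y,w_1,w_2\}$, color the resulting smaller graph by the minimality of $\widehat{G}$, and extend back. The three guaranteed non-rich vertices among $\{x,y,w_1,w_2\}$ give simultaneous recoloring freedom at two independent vertices of the copy of $\widehat{T}$, which is exactly what is needed to hit the rigid target configuration at $u$. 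In both cases I would also verify, via Lemma~\ref{lem:key}, that every auxiliary signed graph constructed is a proper subgraph of $\widehat{G}$ containing neither $(K_4,-)$, $\widetilde{K}_2$, nor $\widehat{T}^+$, so that the induction hypothesis delivers the required $10/3$-coloring and the potential bookkeeping closes.

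The main obstacle will be the $\widehat{T}$-case. Because Lemma~\ref{lem:T} makes the target partial coloring of $\{x,y,w_1,w_2\}$ essentially unique, the non-richness must produce precisely the kind of recoloring that hits this rigid target; controlling the sign pattern around the $\widehat{T}$ up to switching, and certifying that the $2$-neighbor deleted to trigger the induction never coincides with $z,w_1$, or $w_2$ so that Lemma~\ref{lem:3_vertex_neighbors} continues to apply, is the most delicate part. All other obstructions reduce, via Lemma~\ref{lem:key}, to potential checks on small subgraphs, which are routine.
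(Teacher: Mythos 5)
Your high-level plan (contradiction, then split into the two cases of Lemma~\ref{lem:3_vertex_neighbors}, use criticality to color a smaller graph and extend) does match the paper's outline, but the tactical steps contain a genuine gap.

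The crux of your argument in both cases is to ``use that a non-rich endpoint \dots\ has a $2$-neighbor.'' This is false. Recall that a vertex is rich if and only if it has degree at least $5$, or degree $4$ with at most one $2$-neighbor. Hence any $3$-vertex is automatically non-rich, even if none of its neighbors has degree $2$. In Case~1 ($xy\in E(\widehat{G})$), if $d(x)=3$ there may be no $2$-neighbor to hang a recoloring argument on; the paper has to first rule out $d(x)=3$ and $d(y)=3$ by a direct coloring argument (coloring $\widehat{G}-\{u,v,x\}$ and then showing $\phi(x)$ can be chosen so $u$ has two available colors), and only then appeals to the $2$-neighbors of a non-rich degree-$4$ vertex. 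In Case~2 (the $\widehat{T}$ case) the situation is worse for your plan: a non-rich vertex among $\{x,y,w_1,w_2\}$ is necessarily a $3$-vertex with all three of its edges inside the copy of $\widehat{T}$, so it has no $2$-neighbor at all, and the proposed deletion of a $2$-neighbor cannot be carried out. (This is also why, in the $\widehat{T}$ case, ``degree~$\geq 4$'' and ``rich'' become equivalent for these four vertices, which the paper exploits.)

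Your use of Lemma~\ref{lem:T} is also misstated. That lemma gives rigidity for maps of the full $\widehat{T}$ into $\widehat{K}^s_{10;3}$; but you invoke it to conclude that the coloring of $\{x,y,w_1,w_2\}$ is essentially determined in a coloring of $\widehat{G}-u$. Once $u=v_1$ is removed, you only have $\widehat{T}-v_1$, whose colorings are not rigid, so the inference does not go through. The rigidity actually works against you: it is precisely because a valid coloring of the whole $\widehat{T}$ is rare that one must carefully steer the colors of $\{x,y,w_1,w_2\}$ (which the paper does by deleting $\widehat{G}-\{u,v,x,y,w_1,w_2\}$, coloring the rest, and using the freedom at the degree-$3$ vertices of the $\widehat{T}$, together with $2$-connectivity to guarantee only one of them is constrained from outside) so that $u$ retains two options, one of which survives the constraint from $z$.
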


\begin{proof}
	Without loss of generality,  we assume all edges incident to  $u$ or $v$ are negative. By \Cref{lem:3_vertex_neighbors}, either $xy\in E(\widehat{G})$ or there are vertices $w_1,w_2$ such that $\{u, x, y, w_1, w_2\}$ induces a copy of $\widehat{T}$. We discuss two cases separately. 
	\setcounter{claim}{0}
	
	\noindent
	\textbf{Case 1} $xy\in E(\widehat{G})$.
	
	First we show that $d(x)\ge 4$ and $d(y)\ge 4$. By \Cref{lem:TwoNeighborCount}, we have $d(x), d(y)\geq 3$. Towards a contradiction, assume $d_G(x)=3$. Let $w$ be the other neighbor of $x$. Consider a $10/3$-coloring $\phi$ of $\widehat{G}-\{u,v,x\}$ which can be extended to $\widehat{G}-\{u,v\}$ ($\phi$ exists because $\widehat{G}$ is critical). Assume $\phi(y)=1$, depending on the sign of $xw$, we have two cases: If $xw$ is negative (positive, respectively), then $\phi(w)\neq -1$ ($\phi(w)\neq 1$). By symmetries, we may assume $\phi(w)\in \{1,2,3,4,5\}$ ($\phi(w)\in \{2,3,4,5,-1\}$). We choose $\phi(x)=-3$ ($\phi(x)=4$). With respect to $\phi(x)$ and $\phi(y)$, $u$ can be colored by $4$ or $5$ ($-2$ or $-3$), only one of which might be forbidden by $\phi(z)$ because of \Cref{lem:2-vertex}. Hence, we have a 10/3-coloring of $\widehat{G}$, which is a contradiction.    
	
	To complete the proof in this case, we show that both $x$ and $y$ are rich.
	
	By symmetry, assume $d(x)=4$ and that $x$ has two 2-neighbors, say $w_1$ and $w_2$. Let $\phi$  be a coloring of $\widehat{G}-\{u,v,x, w_1, w_2\}$. 
	
	It follows from \Cref{lem:TwoNeighborCount} that $z\neq w_1, w_2$. For extending $\phi$ to $x$, each of the neighbors of $w_1$ and $w_2$ forbids one color by \Cref{lem:2-vertex}, and $\phi(y)$ forbids five colors by \Cref{obs:counting_partial_coloring-K2}. In total we have $|\phi(x)|\ge 3$.  Similarly, as each of $u$ and $v$ have only one colored neighbor, we have $|\phi(u)|=|\phi(v)|=5$. Applying \Cref{lem:K2P3Coloring}, there are choices for the set $\{u,v,x, w_1, w_2\}$.
	
	\bigskip
	\noindent
	\textbf{Case 2} $xy\notin E(\widehat{G})$, and there are vertices $w_1,w_2$ such that $\{u, x, y, w_1, w_2\}$ induces a copy of $\widehat{T}$.
	
	Assume $w_1y$ and $w_2y$ are positive edges and the other edges of $\widehat{T}$ are negative. We show that at least two vertices in $\{x, y,w_1,w_2\}$ are rich.
	
	By \Cref{lem:3_vertex_neighbors}, $z$ is not $w_1$ or $w_2$. As the degree of $x, y, w_1, w_2$ is at least three, it is enough to prove that at least two vertices in $\{x, y,w_1,w_2\}$ are of degree  at least four. 
	
	Since $\widehat{G}$ is 2-connected, we may assume that only one of $x, y, w_1, w_2$ has neighbors out of the $\widehat{T}$. By symmetries, we assume it is either $y$ or $w_1$. Let $\widehat{G}'=\widehat{G}-\{u,v,x, y, w_1,w_2\}$. There exists a mapping $\phi$ from $\widehat{G}'$ to $K_{10;3}^s$ by criticality of $\widehat{G}$. We may choose $\phi$ such that if $d(y)\ge 4$, then $\phi(y)=1$ is possible, and if $d(w_1)\ge 4$, then $\phi(w_1)=-5$ is possible. In both cases, we can extend the coloring to $\phi(y)=1$, $\phi(w_1)=-5$, $\phi(w_2)=3$, and $\phi(x)=-3$. This leaves two options for $u$, $4$ and $5$, only one of which can be forbidden by $\phi(z)$.  
\end{proof}

\begin{lemma}\label{lem:Cubic}
	$\widehat{G}$ is not a cubic signed graph.
\end{lemma}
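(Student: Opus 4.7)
The plan is a proof by contradiction: assume $\widehat{G}$ is cubic. Then $|E(\widehat{G})|=3|V(\widehat{G})|/2$, so $p(\widehat{G})=0$, and the hypotheses force $|V(\widehat G)|\ge 6$.

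My first step would be to sharpen the structural information available. Observing that in $\widehat T$ four of the five vertices already have degree $3$ within $\widehat T$, if $\widehat{G}$ (being cubic) contained $\widehat{T}$ as a subgraph, then these four vertices would have all of their neighbors inside $\widehat T$, so the fifth (degree-$2$) vertex of $\widehat T$ would be a cut vertex of $\widehat G$, contradicting the $2$-connectivity of any $10/3$-critical signed graph. Hence $\widehat T\not\subseteq \widehat G$, and combined with Lemma~\ref{lem:PairTriangles} and Lemma~\ref{lem:NoPositiveK3}, every triangle of $\widehat G$ is negative and no edge of $\widehat G$ lies in two triangles (equivalently, every vertex lies in at most one triangle).

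Next I would fix a vertex $v$ with $N(v)=\{x,y,z\}$. By criticality of $\widehat G$, the graph $\widehat G-v$ admits a $10/3$-coloring $\phi$, and after switching at $v$ we may assume all three edges at $v$ are negative. The extension problem becomes choosing a color from
\[
L(v)=N^-_{K^s_{10;3}}(\phi(x))\cap N^-_{K^s_{10;3}}(\phi(y))\cap N^-_{K^s_{10;3}}(\phi(z)),
\]
which by Observation~\ref{obs:counting_partial_coloring-K2} is an intersection of three consecutive $5$-intervals of $\pm[5]$. The analysis splits on whether $v$ lies in a triangle. If $xy\in E(\widehat G)$, then Lemma~\ref{lem:counting_partial_coloringK3} yields that $\phi(x),\phi(y)$ jointly leave a consecutive interval of length at least $3$ for $v$, which intersected with the $5$-interval from $\phi(z)$ typically produces a non-empty list. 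If $v$ lies in no triangle, then $x,y,z$ are pairwise non-adjacent and we exploit the additional freedom of $\phi$ on their second neighborhoods.

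The main obstacle is ruling out the rigid configurations in which $L(v)=\emptyset$ for every valid choice of $\phi$. In such cases the plan is to locally recolor $\widehat G-v$ around one of $x,y,z$: using Lemmas~\ref{lem:K2P3Coloring}, \ref{lem:4-cycle}, \ref{lem:K_{2,3}}, \ref{lem:NegativeTirnagle}, and \ref{lem:2-vertex} on the second neighborhood, we aim to shift the value of $\phi$ at one of $x,y,z$ so that the three $5$-intervals at $v$ acquire a common point. If no such recoloring exists, the resulting rigidity forces a proper subgraph of $\widehat{G}$ with potential at most $-1$, or a copy of $(K_4,-)$, $\widetilde{K}_2$, or $\widehat{T}^+$ within a small neighborhood of $v$, contradicting Lemma~\ref{lem:key} or Lemma~\ref{lem:adding}. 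In every case we produce a $10/3$-coloring of $\widehat G$, contradicting its criticality.
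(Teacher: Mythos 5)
Your proposal correctly reproduces the first step of the paper's argument: the cut-vertex observation showing $\widehat T\not\subseteq\widehat G$ in the cubic case, hence (via Lemma~\ref{lem:PairTriangles}) no two triangles share an edge. The paper's Claim~1 is exactly this. But from there the proposal stops being a proof and becomes a description of what a proof would have to do.

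The crux of the lemma is precisely the situation you flag as ``the main obstacle'': for an arbitrary $10/3$-coloring $\phi$ of $\widehat G-v$, the list $L(v)$ (an intersection of three $5$-intervals in a $10$-cycle) can genuinely be empty, and it is not true that one can always locally shift $\phi(x)$, $\phi(y)$, or $\phi(z)$ to fix this without breaking other constraints. Deciding when and how a recoloring exists requires knowing the structure of the second neighborhood of $v$ in detail, and that structure can be quite rigid (e.g.\ the cube with all $4$-cycles negative, which the paper has to treat as an explicit terminal case). Your single-vertex deletion gives much less slack than the paper's strategy of deleting the closed neighborhood of an \emph{edge} $uv$ and reconstructing auxiliary graphs $\widehat G'$ by judiciously adding edges, precisely because the latter frees up six vertices' worth of choices and lets the coloring lemmas (\ref{lem:K2P3Coloring}, \ref{lem:4-cycle}, \ref{lem:K_{2,3}}, \ref{lem:NegativeTirnagle}) bite. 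The sentence ``In every case we produce a $10/3$-coloring'' is a conclusion, not an argument; nothing in the proposal rules out, for instance, the configuration where $\phi(x),\phi(y),\phi(z)$ are pairwise at circular distance $\ge 4$ and every neighbor-of-a-neighbor is pinned by the coloring of $\widehat G-v$.

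There is also a smaller omission: the paper needs (Claim~2) that $\widehat G$ is \emph{triangle-free}, not merely that no two triangles share an edge; the proof of that step uses a contraction/edge-addition argument together with the explicit colorable graphs $\widehat H_2,\widehat H'_2$, none of which appears in the proposal. Without triangle-freeness the subsequent case split on $E(\widehat G[W])$ does not even make sense. So the proposal identifies the right opening move, but the bulk of the lemma --- the case analysis that actually carries the weight --- is missing, and there is no reason to believe the single-vertex recoloring strategy suffices to replace it.
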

\begin{proof}
	Assume to the contrary that $\widehat{G}$ is a cubic signed graph. 
	
	\setcounter{claim}{0}
	
	\begin{claim}
		There is no pair of adjacent triangles in $\widehat{G}$.
	\end{claim}
	\noindent\emph{Proof of the claim.} Assume $\{uvx, uvy\}$ is a pair of adjacent triangles. It follows from \Cref{lem:PairTriangles}  that there exists a vertex $w$ such that $\{u,v,x, y, w\}$ induces an isomorphic copy of $\widehat{T}$. Since $\widehat{G}$ is cubic, $w$ is a cut vertex, a contradiction.

	\begin{claim}
		There is no triangle in $\widehat{G}$. 
	\end{claim}
	
	\noindent\emph{Proof of the claim.}
	Assume $\widehat{G}$ contains a triangle $uvw$, which must be negative by \Cref{lem:NoPositiveK3}. Assume $u',v'$ and $w'$ are the corresponding neighbors of $u$, $v$ and $w$.  By Claim 1, $u', v', w'$ are distinct. Without loss of generality, assume that all edges incident to  $u,v,w$ are negative. It follows from \Cref{lem:10/3-colorable_graphs}  that $u',v',w'$ induce no triangle; if they do, then it must be a negative triangle, and hence $\widehat{G}$ is either $\widehat{H}_2$ or $\widehat{H}'_2$ of \Cref{fig:10/3-colorable graphs}. By symmetries, we may assume $u'v'\notin E(\widehat{G})$.

	Let $\widehat{G}'$ be the signed graph obtained from $\widehat{G}$ by removing the three vertices $u,v,w$ and adding the edge $u'v'$ with a negative sign. Since $\widehat{G}$ contains no pair of adjacent triangles, $\widehat{G}'$ contains neither $(k_4,-)$ nor $\widehat{T}^+$. By the choice of $u'v'$, there is no $\widetilde{K}_2$ in $\widehat{G}'$ either. Hence, by \Cref{lem:adding}, $\widehat{G}'$ admits a $10/3$-coloring $\phi$. 	
	By symmetries, we may assume $\phi(u')=1$ and $\phi(v')\in \{4,5,-1\}$. We extend the coloring by setting $\phi(v)=1$ and $\phi(u)\in \{-3,5\}$. The choice of $-3$ leaves us with the options $\{4,5\}$ for $w$, the choice of $5$ leaves us with the options $\{-3,-4\}$ for $w$. As $\phi(w')$ can only forbid a consecutive set of five colors, we can find a matching choice for $w$. Thus the coloring $\phi$ can be extended to $\widehat{G}$, which is a contradiction, hence verifying Claim 2.
	
	\bigskip
	
	We now consider an edge $uv\in E(\widehat{G})$. Let  $w_1, w_2$ be the other neighbors of $u$ and $w_3, w_4$ be the other neighbors of $v$. As $\widehat{G}$ is triangle-free, they are distinct and $w_1w_2, w_3w_4\notin E(\widehat{G})$. Thus we may assume edges incident to $u,v$ are all negative. Let $W=\{w_1,w_2, w_3, w_4\}$. Depending on the subgraph induced by $W$, we consider four cases, which are shown in \Cref{fig:CubicGraph}. Note that a black thick edge indicates an edge with unknown sign, and a black dotted edge indicates a potential edge whose existence is uncertain in \Cref{fig:CubicGraph}.
	\begin{figure}
		\centering
		\begin{minipage}[h]{0.4\textwidth}
			\centering
			\begin{tikzpicture}[scale=0.8,
				baseline=(current bounding box.center),
				node style/.style={circle,draw,inner sep=0.4pt, minimum size=5mm},
				dashed edge/.style={blue, dashed},
				edge/.style={red, thick}
				]
				\node[node style] (w1) at (0, 0) {$w_1$};
				\node[node style] (w2) at (2,0) {$w_2$};
				\node[node style] (w3) at (3,0) {$w_3$};
				\node[node style] (w4) at (5,0) {$w_4$};
				\node[node style] (u) at (1,2.5) {$u$};
				\node[node style] (v) at (4,2.5) {$v$};
				
				\draw[edge] (w3) -- (v)--(w4);
				\draw[edge] (w1) -- (u)--(w2);
				\draw[edge] (v) -- (u);
				\draw[very thick] (w2)--(w3);
			\end{tikzpicture}
			\caption*{Case 1}
		\end{minipage}
		\hfill
		\begin{minipage}[h]{0.4\textwidth}
			\centering
			\begin{tikzpicture}[scale=0.8,
				baseline=(current bounding box.center),
				node style/.style={circle,draw,inner sep=0.5pt, minimum size=5mm},
				dashed edge/.style={blue, dashed},
				edge/.style={red, thick}
				]
				\node[node style] (w1) at (0, 0) {$w_1$};
				\node[node style] (w2) at (2,0) {$w_2$};
				\node[node style] (w3) at (3,0) {$w_3$};
				\node[node style] (w4) at (5,0) {$w_4$};
				\node[node style] (u) at (1,2.5) {$u$};
				\node[node style] (v) at (4,2.5) {$v$};
				
				\draw[edge] (w3) -- (v)--(w4);
				\draw[edge] (w1) -- (u)--(w2);
				\draw[edge] (v) -- (u);
			\end{tikzpicture}
			\caption*{Case 2}
		\end{minipage}
		\\
		\vspace{5mm}
		\begin{minipage}[h]{0.4\textwidth}
			\centering
			\begin{tikzpicture}[scale=0.8,
				baseline=(current bounding box.center),
				node style/.style={circle,draw,inner sep=0.5pt, minimum size=5mm},
				dashed edge/.style={blue, dashed},
				edge/.style={red, thick}
				]
				\node[node style] (w1) at (0, 0) {$w_1$};
				\node[node style] (w2) at (2,0) {$w_2$};
				\node[node style] (w3) at (3,0) {$w_3$};
				\node[node style] (w4) at (5,0) {$w_4$};
				\node[node style] (u) at (1,2.5) {$u$};
				\node[node style] (v) at (4,2.5) {$v$};
				
				\draw[edge] (w3) -- (v)--(w4);
				\draw[edge] (w1) -- (u)--(w2);
				\draw[edge] (v) -- (u);
				\draw[very thick] (w1) to[out=-60, in=-120] (w3);
				\draw[very thick] (w1) to[out=-60, in=-120] (w4);
				\draw[dotted, very thick] (w2)--(w3);
				\draw[dotted, very thick] (w2) to[out=-60, in=-120] (w4);
				
			\end{tikzpicture}
			\caption*{Case 3}
		\end{minipage}
		\hfill
		\begin{minipage}[h]{0.4\textwidth}
			\centering
			\begin{tikzpicture}[scale=0.8,
				baseline=(current bounding box.center),
				node style/.style={circle,draw,inner sep=0.5pt, minimum size=5mm},
				dashed edge/.style={blue, dashed},
				edge/.style={red, thick}
				]
				\node[node style] (w1) at (0, 0) {$w_1$};
				\node[node style] (w2) at (2,0) {$w_2$};
				\node[node style] (w3) at (3,0) {$w_3$};
				\node[node style] (w4) at (5,0) {$w_4$};
				\node[node style] (u) at (1,2.5) {$u$};
				\node[node style] (v) at (4,2.5) {$v$};
				
				\draw[edge] (w3) -- (v)--(w4);
				\draw[edge] (w1) -- (u)--(w2);
				\draw[edge] (v) -- (u);
				\draw[very thick] (w2)--(w3);
				\draw[very thick] (w1) to[out=-60, in=-120] (w4);
			\end{tikzpicture}
			\caption*{Case 4}
		\end{minipage}
		\caption{Four Cases of the subgraph induced by $W$}
		\label{fig:CubicGraph}
	\end{figure}
	
	Assume that $x_i, y_i$ are the other neighbors of $w_i$ for $i \in [4]$. 
Note that $x_i$ may possibly be equal to $x_j$ or $y_j$ for $i \neq j$. 
 First we show that $\{x_{1}, y_{1}\} \neq \{x_{2}, y_{2}\}$.
 Otherwise, the underlying graph of $\widehat{G}[x_{1}, y_{1}, w_{1}, w_{2}, u]$ is $K_{2,3}$. Since $\widehat{G}$ is cubic, the lists obtained on this $K_{2,3}$ after a $10/3$-coloring of $\widehat{G}-\{x_{1}, y_{1}, w_{1}, w_{2}, u\}$ satisfy the condition of \Cref{lem:K_{2,3}}, and hence can be extended to $\widehat{G}$. 
Similarly we have $\{x_{3}, y_{3}\} \neq \{x_{4}, y_{4}\}$.

	\textbf{Case 1.} $E(\widehat{G}[W])=w_2w_3$. 
	
	First we claim that $uvw_3w_2$ is a negative 4-cycle. Suppose not,  then the edge  $w_2w_3$ is negative. Let $\widehat{G}'$ be obtained from $\widehat{G}$ by deleting vertices $u$, $v$ and adding the edges $w_1w_3$ and $w_2w_4$, each with a negative sign. Since $\widehat{G}'$ and all its subgraphs are of potential at least 0, $\widehat{G}'$ admits a 10/3-coloring $\phi$. The coloring  $\phi$ can be extended to $\widehat{G}$ by assigning $\phi(u)=\phi(w_3)$ and $\phi(v)=\phi(w_2)$, which is a contradiction. Hence $w_2w_3$ is a positive edge.
	
	Suppose that $\{w_{1}, x_{1}, w_{4}, y_{1}\}$ doesn't induce a negative 4-cycle. Let $\widehat{G}'$ be obtained from $\widehat{G}$ by deleting the vertices in $\{u,v\}\cup W$ and adding the edges $x_1y_1$ and $x_4y_4$ such that each of $w_1x_1y_1$ and $w_4x_4y_4$ is a positive triangle.  This is possible because we assume $\{w_{1}, x_{1}, w_{4}, y_{1}\}$ doesn't induce a negative 4-cycle.  As $\widehat{G}'$ is still cubic and all its subgraphs are of potential at least 0, $\widehat{G}'$ admits a 10/3-coloring $\phi$.  We claim $\phi$ can be extended to a $10/3$-coloring of $\widehat{G}$, which is a contradiction. That is because to  extend $\phi(x_1)$ and $\phi(y_1)$ to $w_1$, by \Cref{lem:counting_partial_coloringK3}, $w_1$ has three choices. Based on the three choices of $w_1$, by \Cref{lem:a+4}, we have 7 choices for $u$. Similarly, we have 7 choices for $v$ based on $w_4$. As in this case,  $w_{2}$ and $w_{3}$, each has only one colored neighbor, they have a list of $5$ consecutive colors. By  \Cref{lem:4-cycle}, the coloring $\phi$ can be extended to a $10/3$-coloring of $\widehat{G}$, a contradiction. 

	Until now we have proved that  if for an edge $uv$, its four neighbors $w_1,w_2,w_3,w_4$  induce an edge, say $w_{2}w_{3}$, then the vertices $w_{1}$, $w_{4}$ have two common neighbors, say $x_{1}, y_{1}$, such that $\{w_{1}, w_{4}, x_{1}, y_{1}\}$ induces a negative 4-cycle.  If there is a matching from $w_{2}, w_{3}$ to $x_{1}, y_{1}$, by symmetries, assume $w_{2}x_{1}, w_{3}y_{1}\in E(\widehat{G})$, then the subgraph induced by $\{u, v, x_{1}, y_{1}\}\cup W$ is already cubic. Depending on the signs of these edges, we have four possibilities for $\widehat{G}$, which are presented in \Cref{fig:8vertices}, each together with a $10/3$-coloring. Hence, one of $w_{2}, w_{3}$ has no neighbor in $x_{1}, y_{1}$. By symmetry, assume $w_{2}$ is such a vertex.
	
	We now observe that the four neighbors $w_{1}, v, x_{2}, w_{2}$ of $uw_{2}$ also induce a unique edge, which is $vw_{3}$.  We conclude that the other two   $w_{1}, x_{2}$ have  two common neighbors: $x_{1}, y_{1}$. Over all in the subgraph induced by $\{u, v, x_{1}, x_{2}, x_{3}, y_{1}\}\cup W$, all of the vertices except $w_{3}$ are of degree 3. Since $\widehat{G}$ is cubic, this implies that $w_{3}$ is a cut vertex which is a contradiction to minimality of $\widehat{G}$.

	\begin{figure}
	\centering
	\begin{minipage}[h]{0.4\textwidth}
	\begin{tikzpicture}[scale=0.5,
		baseline=(current bounding box.center),
		node style/.style={circle,draw,inner sep=0.5pt, minimum size=5mm},
		dashed edge/.style={blue, dashed, thick},
		edge/.style={red, thick}
		]
		
		\node[node style] (w1) at (0, 0) {$w_1$};
		\node[left=2pt of w1] {-2};
		\node[node style] (w4) at (6, 0) {$w_4$};
		\node[right=2pt of w4] {-3};
		\node[node style] (u) at (0, 6) {$u$};
		\node[left=2pt of u] {1};
		\node[node style] (v) at (6, 6) {$v$};
		\node[right=2pt of v] {4};
		\node[node style] (x1) at (2, 2) {$x_{1}$};
		\node[left=2pt of x1] {1};
		\node[node style] (y1) at (4, 2) {$y_{1}$};
		\node[right=2pt of y1] {-5};
		\node[node style] (w2) at (2, 4) {$w_2$};
		\node[left=2pt of w2] {-3};
		\node[node style] (w3) at (4, 4) {$w_3$};
		\node[right=2pt of w3] {-2};
		
		\draw[edge] (w1) -- (u) -- (v) -- (w4) -- (x1)--(w1)--(y1);
		\draw[edge] (w2) -- (u);
		\draw[edge] (w3) -- (v);
		\draw[dashed edge] (w2) -- (w3);
		\draw[dashed edge] (w4)--(y1);
		\draw[edge] (x1)--(w2);
		\draw[edge] (y1)--(w3);
	\end{tikzpicture}
	\end{minipage}
		\begin{minipage}[h]{0.4\textwidth}
	\begin{tikzpicture}[scale=0.5,
		baseline=(current bounding box.center),
		node style/.style={circle,draw,inner sep=0.5pt, minimum size=5mm},
		dashed edge/.style={blue, dashed, thick},
		edge/.style={red, thick}
		]
		
		\node[node style] (w1) at (0, 0) {$w_1$};
		\node[left=2pt of w1] {4};
		\node[node style] (w4) at (6, 0) {$w_4$};
		\node[right=2pt of w4] {-2};
		\node[node style] (u) at (0, 6) {$u$};
		\node[left=2pt of u] {1};
		\node[node style] (v) at (6, 6) {$v$};
		\node[right=2pt of v] {4};
		\node[node style] (x1) at (2, 2) {$x_{1}$};
		\node[left=2pt of x1] {-5};
		\node[node style] (y1) at (4, 2) {$y_{1}$};
		\node[right=2pt of y1] {-4};
		\node[node style] (w2) at (2, 4) {$w_2$};
		\node[left=2pt of w2] {-3};
		\node[node style] (w3) at (4, 4) {$w_3$};
		\node[right=2pt of w3] {-2};
		
		\draw[edge] (w1) -- (u) -- (v) -- (w4) -- (x1)--(w1)--(y1);
		\draw[edge] (w2) -- (u);
		\draw[edge] (w3) -- (v);
		\draw[dashed edge] (w2) -- (w3);
		\draw[dashed edge] (w4)--(y1);
		\draw[dashed edge] (x1)--(w2);
		\draw[dashed edge] (y1)--(w3);
	\end{tikzpicture}
	\end{minipage}
	\\
					\vspace{5mm}
				\begin{minipage}[h]{0.4\textwidth}
	\begin{tikzpicture}[scale=0.5,
		baseline=(current bounding box.center),
		node style/.style={circle,draw,inner sep=0.5pt, minimum size=5mm},
		dashed edge/.style={blue, dashed, thick},
		edge/.style={red, thick}
		]
		\node[node style] (w1) at (0, 0) {$w_1$};
		\node[left=2pt of w1] {-2};
		\node[node style] (w4) at (6, 0) {$w_4$};
		\node[right=2pt of w4] {-3};
		\node[node style] (u) at (0, 6) {$u$};
		\node[left=2pt of u] {1};
		\node[node style] (v) at (6, 6) {$v$};
		\node[right=2pt of v] {4};
		\node[node style] (x1) at (2, 2) {$x_{1}$};
		\node[left=2pt of x1] {3};
		\node[node style] (y1) at (4, 2) {$y_{1}$};
		\node[right=2pt of y1] {-5};
		\node[node style] (w2) at (2, 4) {$w_2$};
		\node[left=2pt of w2] {5};
		\node[node style] (w3) at (4, 4) {$w_3$};
		\node[right=2pt of w3] {-2};
		
		\draw[edge] (w1) -- (u) -- (v) -- (w4) -- (x1)--(w1)--(y1);
		\draw[edge] (w2) -- (u);
		\draw[edge] (w3) -- (v);
		\draw[dashed edge] (w2) -- (w3);
		\draw[dashed edge] (w4)--(y1);
		\draw[dashed edge] (x1)--(w2);
		\draw[edge] (y1)--(w3);
	\end{tikzpicture}
	\end{minipage}
	  \hspace{0.02\textwidth} 
				\begin{minipage}[h]{0.4\textwidth}
	\begin{tikzpicture}[scale=0.5,
		baseline=(current bounding box.center),
		node style/.style={circle,draw,inner sep=0.5pt, minimum size=5mm},
		dashed edge/.style={blue, dashed, thick},
		edge/.style={red, thick}
		]
		\node[node style] (w1) at (0, 0) {$w_1$};
		\node[left=2pt of w1] {-2};
		\node[node style] (w4) at (6, 0) {$w_4$};
		\node[right=2pt of w4] {-2};
		\node[node style] (u) at (0, 6) {$u$};
		\node[left=2pt of u] {1};
		\node[node style] (v) at (6, 6) {$v$};
		\node[right=2pt of v] {4};
		\node[node style] (x1) at (2, 2) {$x_{1}$};
		\node[left=2pt of x1] {2};
		\node[node style] (y1) at (4, 2) {$y_{1}$};
		\node[right=2pt of y1] {-4};
		\node[node style] (w2) at (2, 4) {$w_2$};
		\node[left=2pt of w2] {-3};
		\node[node style] (w3) at (4, 4) {$w_3$};
		\node[right=2pt of w3] {-2};
		
		\draw[edge] (w1) -- (u) -- (v) -- (w4) -- (x1)--(w1)--(y1);
		\draw[edge] (w2) -- (u);
		\draw[edge] (w3) -- (v);
		\draw[dashed edge] (w2) -- (w3);
		\draw[dashed edge] (w4)--(y1);
		\draw[edge] (x1)--(w2);
		\draw[dashed edge] (y1)--(w3);
	\end{tikzpicture}
	\end{minipage}
	\caption{The signed graphs}
	\label{fig:8vertices}
\end{figure}
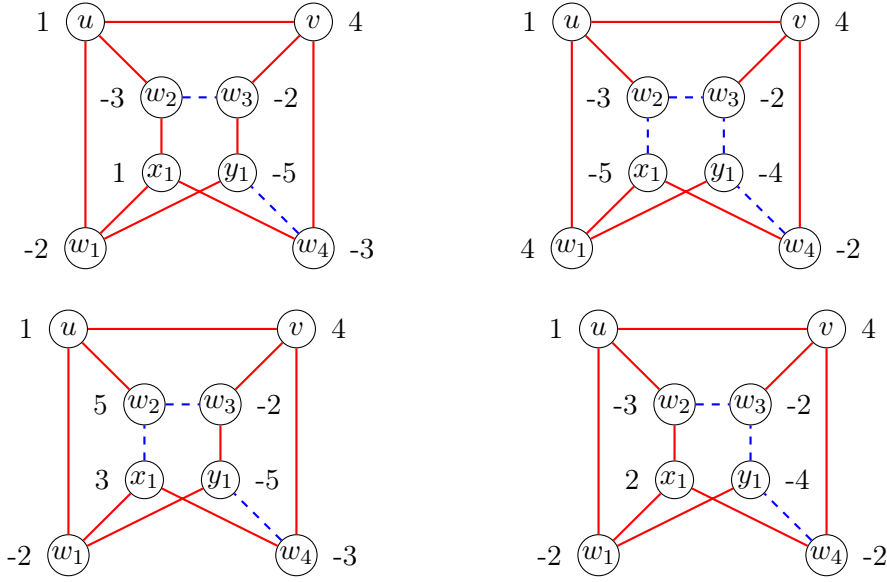

	\textbf{Case 2.}   $W$ is an independent set. 
	
	Suppose $\{w_{i}, x_{i}, y_{i}, w_{j}\}$ dosen't induce a negative 4-cycle for any $i,j\in [4]$. Let $\widehat{G}'$ be obtained from $\widehat{G}$ by deleting vertices in $\{u,v\}\cup W$ and adding the edge set $S=\{x_iy_i\mid i\in [4] \}$ such that each  $w_ix_iy_i$ is a positive triangle. This is possible because $\{w_{i}, x_{i}, y_{i}, w_{j}\}$  induces no negative 4-cycle. As $\widehat{G}'$ is subcubic, it admits a 10/3-coloring $\phi$. We claim that $\phi$ can be extended to a 10/3-coloring of $\widehat{G}$, which is a contradiction. That is because to extend $\phi$ to $w_i$, by \Cref{lem:counting_partial_coloringK3}, each $w_i$ has at least three choices. Based on the three choices of $w_1$, by \Cref{lem:a+4}, we have seven choices for $u$. Similarly, based on the choices of $w_2$, we have seven choices for $u$. Thus altogether we have at least four choices for $u$ based on the colors $\phi(x_1)$,  $\phi(x_2)$, $\phi(y_1)$ and $\phi(y_2)$. Similarly, we have at least four choices for $v$ based on the colors $\phi(x_3)$,  $\phi(x_4)$, $\phi(y_3)$ and $\phi(y_4)$. By \Cref{lem:K2P3Coloring}(a), we have an extension of $\phi$ to $\widehat{G}$.
		
Recall that $\{x_{1}, y_{1}\} \neq \{x_{2}, y_{2}\}$ and $\{x_{3}, y_{3}\} \neq \{x_{4}, y_{4}\}$.  By symmetries, assume that $w_{2}x_{2}y_{2}w_{3}$ is a negative 4-cycle. From Case 1, we conclude that the four neighbors $w_{1},w_{2},w_{3},w_{4}$ of any edge $uv$ cann't induce just one edge in $\widehat{G}$. From previous part of Case 2,  if $\{w_{1},w_{2},w_{3},w_{4}\}$ forms an independent set, then there exist $i \in \{1,2\}$ and $j \in \{3,4\}$ such that $N_{\widehat{G}}(w_{i}) \setminus \{u\} \;=\; N_{\widehat{G}}(w_{j}) \setminus \{v\},$ and the subgraph induced by $\{w_{i},w_{j}\} \cup \bigl(N_{\widehat{G}}(w_{i}) \setminus \{u\}\bigr)$ is a negative $4$-cycle.

Assume $w_{1}x_{2}\in E(\widehat{G})$. Since $\{x_{1}, y_{1}\} \neq \{x_{2}, y_{2}\}$,  $w_{1}y_{2}\notin E(\widehat{G})$. But then Case 1 applies on  the edge $uw_{2}$. Hence $w_{1}x_{2}\notin E(\widehat{G})$. By symmetries, $w_{1}x_{2}, w_{1}y_{2}, w_{4}x_{2}, w_{4}y_{2}\notin E(\widehat{G})$.  
Repeating the argument starting with the edge $uw_{2}$, since their four neighbors, $\{v, w_{1}, x_{2}, y_{2}\}$ is an independent set, there must be a negative 4-cycle  induced by two of these vertices and their neighbors. However, this is not possible because $w_{4}x_{2}, w_{4}y_{2}\notin E(\widehat{G})$ and  $w_{3}x_{2}, w_{3}y_{2}\in E(\widehat{G})$.

	\textbf{Case 3.} $w_1w_3, w_1w_4\in E(\widehat{G})$. 
	
	Observe that $X=\{u,v,w_1,w_3,w_4\}$ induces a signed $K_{2,3}$. Let $\phi$  be a $10/3$-coloring of $\widehat{G}-X$. The induced lists on the vertices in $X$ are a consecutive list of 5 colors for each of $w_{3}$, $w_{4}$, $u$ and the full list of 10 colors for $w_{1}$ and $v$. We may then apply \Cref{lem:K_{2,3}}, which is a contradiction.
	
	Since in Case 3,  we don't restrict other connections between $\{w_{1}, w_{2}\}$ and $\{w_{3}, w_{4}\}$, and by symmetries, we have only one case left. 
	
	\textbf{Case 4.} $E(\widehat{G}[W])=w_1w_4, w_2w_3$. 
	
	First we claim that $w_1w_4$ and $w_2w_3$ are both positive. Assume that $w_1w_4$ is negative.  Let $\widehat{G}'=\widehat{G}-u+w_2w_4$ where $w_2w_4$ is assigned a negative sign. We claim that $\widehat{G}'$ is $10/3$-colorable. If not, then it has a 10/3-critical subgraph $\widehat{H}$. Since $|V(H)|<|V(G)|$, and clearly $\widehat{H}\notin \{\widetilde{K}_2, (K_4,-)\}$, we must have  $p(\widehat{H})\leq -1$. But any subgraph $\widehat{H}'$ of $\widehat{G}'$ satisfies $p(\widehat{H}')\geq 0$ because   $w_{4}$ is the only 4-vertex, however, two of its neighbor ($v$ and $w_{1}$) are  2-vertices. Let $\phi$ be a $10/3$-coloring of $\widehat{G}'$. We can extend $\phi$ to $\widehat{G}$ by assigning $\phi(u)=\phi(w_4)$. 
	
	We conclude that the four neighbors $w_1,w_2,w_3,w_4$ of any edge $uv$ of $\widehat{G}$ induce a matching such that the two 4-cycles are negative. Applying the same on the edge $uw_1$ ($vw_4$, respectively), we must have a vertex $x$ ($y$, respectively) connected to $w_1$ and $w_2$ ($w_3$ and $w_4$, respectively) with different signs. Regarding $w_1w_4$, $xy$ must be an edge such that $xw_1w_4y$ is a negative 4-cycle. Thus $\widehat{G}$ is a sign graph on the cube,  with a signature where every 4-cycle is negative. Noting that any pair of them are switching equivalent, a presentation of it together with a 10/3-coloring is given in \Cref{fig:cube}. 
\end{proof}

\begin{figure}
	\centering
	\begin{tikzpicture}[scale=0.6,
		baseline=(current bounding box.center),
		node style/.style={circle,draw,inner sep=0.5pt, minimum size=5mm},
		dashed edge/.style={blue, dashed, thick},
		edge/.style={red, thick}
		]
		
		\node[node style] (w1) at (0, 0) {$w_1$};
		\node[left=2pt of w1] {-2};
		\node[node style] (w4) at (6, 0) {$w_4$};
		\node[right=2pt of w4] {-3};
		\node[node style] (u) at (0, 6) {$u$};
		\node[left=2pt of u] {1};
		\node[node style] (v) at (6, 6) {$v$};
		\node[right=2pt of v] {4};
		\node[node style] (x) at (2, 2) {$x$};
		\node[left=2pt of x] {-5};
		\node[node style] (y) at (4, 2) {$y$};
		\node[right=2pt of y] {5};
		\node[node style] (w2) at (2, 4) {$w_2$};
		\node[left=2pt of w2] {-3};
		\node[node style] (w3) at (4, 4) {$w_3$};
		\node[right=2pt of w3] {-2};
		
		\draw[edge] (w1) -- (u) -- (v) -- (w4) -- (y)--(x)--(w1);
		\draw[edge] (w2) -- (u);
		\draw[edge] (w3) -- (v);
		\draw[dashed edge] (w1) -- (w4);
		\draw[dashed edge] (x)--(w2)--(w3)--(y);
	\end{tikzpicture}
	\caption{The signed graph on the cube}
	\label{fig:cube}
\end{figure}
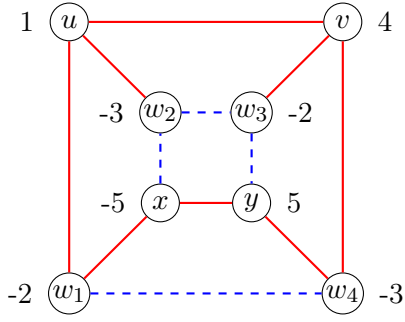

\begin{lemma}\label{lem:Delta4}
	If  $\widehat{G}$ has no $(3,2)$-edge, then $\widehat{G}$ has at least one rich vertex. 
\end{lemma}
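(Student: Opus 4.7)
The plan is to derive a contradiction by producing a $10/3$-coloring of $\widehat{G}$. Suppose $\widehat{G}$ has no rich vertex. Combining this with Lemma~\ref{lem:TwoNeighborCount} and the hypothesis that $\widehat{G}$ has no $(3,2)$-edge yields the structural facts that $\Delta(\widehat{G})\leq 4$, every $4$-vertex has at least two $2$-neighbors, every $2$-vertex has both its neighbors of degree exactly $4$, and every $3$-vertex has only $3^+$-neighbors. By Lemma~\ref{lem:Cubic}, $\widehat{G}$ is not cubic, so $V_2$ and $V_4$ are both non-empty; counting the edges between $V_2$ and $V_4$ gives $|V_2|\geq|V_4|$, consistent with $p(\widehat{G})\geq 0$.

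First I dispose of the case where some $4$-vertex $v$ has $k\in\{3,4\}$ $2$-neighbors $x_1,\ldots,x_k$. Remove $\{v\}$ together with all $x_i$ and $10/3$-color the remainder by criticality. Then $v$ retains at most $4-k\leq 1$ colored neighbor, so its list of admissible colors has size at least $5$ (or $10$ if $k=4$); by Lemma~\ref{lem:2-vertex} each $x_i$ rules out at most one color for $\phi(v)$ through its second neighbor $z_i$, leaving at least $5-3=2$ (respectively $10-4=6$) valid choices. A valid $\phi(v)$ is chosen, after which each $x_i$ extends by Lemma~\ref{lem:2-vertex}, contradicting criticality.

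In the remaining case every $4$-vertex has exactly two $2$-neighbors, so $|V_2|=|V_4|$ and $p(\widehat{G})=0$. Fix a $4$-vertex $v$ with $2$-neighbors $x_1,x_2$ and remaining neighbors $y_1,y_2$; let $z_i$ be the other neighbor of $x_i$. Form $\widehat{G}':=\widehat{G}-\{v,x_1,x_2\}+e$, where $e=y_1y_2$ is a new edge with sign $\sigma(vy_1)\sigma(vy_2)$. The sign is chosen so that in any $10/3$-coloring $\phi$ of $\widehat{G}'$ the edge constraint on $e$ forces the cyclic distance between $\sigma(vy_1)\phi(y_1)$ and $\sigma(vy_2)\phi(y_2)$ to be at most $2$ in $K^s_{10;3}$, which guarantees that the list $L(v)$ of extensions at $v$ under the original edges $vy_1,vy_2$ has size at least $3$. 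Since $|V(\widehat{G}-\{v,x_1,x_2\})|=|V(\widehat{G})|-3\leq|V(\widehat{G})|-2$, Lemma~\ref{lem:adding} applies: either $\widehat{G}'$ is $10/3$-colorable---in which case Lemma~\ref{lem:2-vertex} applied to $x_1,x_2$ forbids at most two colors from $L(v)$, leaving a valid choice for $\phi(v)$, after which each $x_i$ extends via Lemma~\ref{lem:2-vertex}, contradicting criticality---or $\widehat{G}'$ contains $(K_4,-)$, $\widetilde{K}_2$, or $\widehat{T}^+$ through the new edge $e$.

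The main obstacle is the non-colorable subcase of Lemma~\ref{lem:adding}. A $\widetilde{K}_2$ forces $y_1y_2\in E(\widehat{G})$ with sign opposite to $\sigma(vy_1)\sigma(vy_2)$, so $vy_1y_2$ is a negative triangle by Lemma~\ref{lem:NoPositiveK3}; this subcase is handled by a secondary reduction (for instance deleting $\{v,x_1\}$ and adding a suitable edge through $z_1$ chosen by the same sign rule, possibly invoking Lemma~\ref{lem:PairTriangles} if $vy_1$ or $vy_2$ lies in a second triangle). The $(K_4,-)$ and $\widehat{T}^+$ subcases pin down very restrictive local structures around $y_1,y_2$ whose induced-subgraph potentials, computed via Lemma~\ref{lem:key}, either contradict the minimality of $\widehat{G}$ or force $\widehat{G}$ to coincide with one of the $10/3$-colorable graphs of Lemma~\ref{lem:10/3-colorable_graphs}.
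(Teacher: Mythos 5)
Your first half tracks the paper closely: you correctly deduce from the hypothesis (no $(3,2)$-edge, no rich vertex, not cubic) that $\Delta(\widehat{G})=4$ and every $4$-vertex has exactly two $2$-neighbors, and your reduction of forming $\widehat{G}'=\widehat{G}-\{v,x_1,x_2\}+y_1y_2$ with the sign that makes $vy_1y_2$ a positive triangle, then invoking \Cref{lem:adding} and \Cref{lem:counting_partial_coloringK3}, is exactly the paper's move for the case $y_1y_2\notin E(\widehat{G})$.

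The trouble is that the remaining branches are not proved but only gestured at, and one of them contains the hard core of the lemma. The $\widetilde{K}_2$ branch corresponds precisely to $y_1y_2\in E(\widehat{G})$, i.e.\ $vy_1y_2$ is already a (necessarily negative) triangle of $\widehat{G}$. Your ``secondary reduction'' (delete $\{v,x_1\}$, add an edge through $z_1$, perhaps use \Cref{lem:PairTriangles}) is unspecified: it is not stated which edge is added, why \Cref{lem:adding} would apply to the new graph, or how a coloring of the new graph extends to $\widehat{G}$. In the paper this case is handled completely differently: one sets $\Gamma=\{v,y_1,y_2\}$ together with all their $2$-neighbors, colors $\widehat{G}-\Gamma$, observes (using exactly-two-$2$-neighbors for $4$-vertices and no $(3,2)$-edge) that the residual lists on the negative triangle $vy_1y_2$ satisfy $|L(y_1)|,|L(y_2)|\ge 5$ and $|L(v)|\ge 8$, and then invokes \Cref{lem:NegativeTirnagle} to finish. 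That lemma is the essential tool here, and it does not appear in your proposal at all; without it, the triangle case is unresolved. Similarly, the $(K_4,-)$ and $\widehat{T}^+$ branches of \Cref{lem:adding} are dismissed with ``very restrictive local structures $\ldots$ contradict minimality,'' but the actual argument has content: for $(K_4,-)$ one must show all its vertices have degree $3$ in $\widehat{G}$ so that $v$ becomes a cut-vertex (contradicting $2$-connectivity of a critical graph), and for $\widehat{T}^+$ one uses $p(\widehat{T}^+-y_1y_2)=1$ together with \Cref{lem:key} to force $\widehat{T}^+-y_1y_2\cong\widehat{T}$ and hence a rich $4$-vertex. None of this is reconstructed; as written, your proposal has a genuine gap precisely where the difficulty lies.
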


\begin{proof}
	Recall that any vertex of degree at least 5 is rich. A vertex of degree 4 is also rich if it has at most one 2-neighbor. As $\widehat{G}$ is not cubic (\Cref{lem:Cubic}), we assume $\Delta(G)=4$. As a 4-vertex with three or four 2-neighbors is forbidden (\Cref{lem:TwoNeighborCount}), we assume that every $4$-vertex of $\widehat{G}$ has exactly two 2-neighbors.

	Consider a 4-vertex $u$ with $x, y, v, w$ being its neighbors, where $v,w$ are degree 2. We claim that $xy\in E(\widehat{G})$. Suppose not. Let $\widehat{G}'=\widehat{G}-\{u,v,w\}+xy$ with assigning a sign to $xy$ such that $uxy$ would be a positive triangle. 
	Since $xy\notin \widehat{G}$, the graph $\widehat{G}'$ does not contain any copy of $\widetilde{K}_{2}$. Suppose that $(K_{4},-)\subseteq \widehat{G}'$. In this case, both $x$ and $y$ are vertices of the $(K_{4},-)$, while $u$ is not contained in it. Moreover, all vertices in the $(K_{4},-)$ must have degree $3$ in $\widehat{G}$; otherwise, there exists a $4$-vertex in $\widehat{G}$ with at least three $3^{+}$-neighbors, contradicting the assumption. Consequently, $K_{4}-xy+u$ forms a block of $\widehat{G}$ and $u$ becomes a cut-vertex of $\widehat{G}$. Suppose $\widehat{T}^{+}\subseteq \widehat{G}'$. Observe that $p(\widehat{T}^{+}-xy)=1$. By \Cref{lem:key}, we know that $\widehat{T}^{+}-xy\cong \widehat{T}$. By the structure of $\widehat{T}$ and $\widehat{T}^{+}$, we have $u\notin V(\widehat{T})$ and then one of $x$ and $y$ is a rich 4-vertex in $\widehat{G}$.
	Therefore, by \Cref{lem:adding}, $\widehat{G}'$ admits a 10/3-coloring $\phi$. By \Cref{lem:counting_partial_coloringK3}, $u$ has 3 available colors with respect to $\phi(x)$ and $\phi(y)$. Together with \Cref{lem:2-vertex}, $\phi$ can be extended to $\widehat{G}$, which is a contradiction. Thus $xy\in \widehat{G}$. 
	
	Let $\Gamma$ be the set consisting of  $u,x,y$  and all their 2-neighbors. We claim that any $10/3$-coloring $\phi$ of $\widehat{G}-\Gamma$ is extendable to $\widehat{G}$.  If $d(x)=4$, then two neighbors other than $u, y$ are 2-vertices, in which case, $\phi$ forbids at most two colors on $x$.  If $d(x)=3$, then $\phi$ forbids five colors on $x$.  Considering the same situation on $y$, the list $\mathcal{L}$ of available colors for $uxy$ satisfy $|L(x)|, |L(y)|\geq 5$, and $|L(u)|\geq 8$. It follows from  \Cref{lem:NegativeTirnagle} that for any such list $\mathcal{L}$, $\widehat{G}$ is $\mathcal{L}$-colorable.
\end{proof}

\begin{lemma} \label{lem:Counting_r(x)}
	Let $r(x)$ be the number of 2-neighbors and poor 3-neighbors and poor pseudo neighbors of $x$.
	\begin{itemize}
		\item if $d(x)=4$, then $r(x)\leq 2$;
		\item if $d(x)=5$, then $r(x)\leq 4$, moreover, $r(x)\leq 3$ if $x$ has a poor pseudo neighbor;
		\item if $d(x)=6$, then $r(x)\leq 5$;
		\item if $d(x)\ge 7$, then $r(x)\leq d(x)$.
\end{itemize}
\end{lemma}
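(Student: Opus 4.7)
I would prove each clause of the lemma by contradiction. Suppose $x$ violates the stated bound on $r(x)$, and let $S$ be the set consisting of $x$ together with its bad satellites and the 2-vertices hanging from them (each 2-neighbor of $x$; each poor 3-neighbor of $x$ paired with its 2-satellite; each poor pseudo-neighbor of $x$ paired with its 2-satellite and embedded in its enclosing copy of $\widehat{T}$). Removing $S$ and applying criticality of $\widehat{G}$ produces a 10/3-coloring $\phi$ of the remainder, and the task reduces to bounding the list $L(x)$ of colors still available at $x$ and then extending $\phi$ through $S$ back to $x$.

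The extension is governed by the partial-coloring lemmas already in hand. A 2-neighbor of $x$ costs at most one color on $L(x)$ by \Cref{lem:2-vertex}. A poor 3-neighbor $u$ of $x$, by \Cref{lem:3_vertex_neighbors}, either sits on a negative triangle $xuy$---in which case \Cref{lem:counting_partial_coloringK3} gives a list of at least three consecutive colors for $u$ once $x,y$ are colored---or else is housed in a $\widehat{T}$ involving $x$, where the rigidity \Cref{lem:T} pins down the extension on a small forbidden set at $x$. A poor pseudo-neighbor $w$ of $x$ lies in a copy of $\widehat{T}$ anchored at $x$; coloring this $\widehat{T}$ minus $w$ and its 2-satellite forbids a whole consecutive interval of colors on $x$'s list (via \Cref{obs:counting_partial_coloring-K2} applied on the positive 5-cycle of $\widehat{T}$). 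These per-satellite costs are then summed against $|L(x)|$.

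For $d(x)\geq 7$, the bound $r(x)\leq d(x)$ follows by the crudest counting $|L(x)|\geq 10-r(x)>0$; the only way $r(x)$ could exceed $d(x)$ is if many copies of $\widehat{T}$ are anchored at $x$, which a potential check through \Cref{lem:key} forbids. For $d(x)=6$ and the pseudo-neighbor-free case of $d(x)=5$, each bad satellite forbids only one color on $L(x)$ and the extension through the 2-satellites is handled by \Cref{lem:K2P3Coloring}, so the inequality $|L(x)|>0$ closes the argument. The drop from $r(x)\leq 4$ to $r(x)\leq 3$ at $d(x)=5$ when a poor pseudo-neighbor is present reflects the fact that a pseudo-neighbor costs a full interval of colors rather than a single one. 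For $d(x)=4$ with $r(x)\geq 3$, \Cref{lem:3_vertex_neighbors} forces at least one negative triangle incident to $x$, and the residual list-coloring problem on that triangle is solved by \Cref{lem:NegativeTirnagle}; the four exceptional list profiles are ruled out by exhibiting a subgraph of non-positive potential, contradicting \Cref{lem:key}.

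The chief obstacle is the $d(x)=4$ case: the starting list $L(x)$ is already short and the intervals forbidden by distinct interacting satellites can conspire to leave a list whose \emph{structure}, not merely cardinality, must be controlled before \Cref{lem:NegativeTirnagle} is invoked. A secondary subtlety is verifying that distinct bad satellites of $x$ carry disjoint 2-satellites and $\widehat{T}$-anchors: this is precisely where the clauses ``$z\notin\{x,y\}$'' and ``$z\notin\{w_1,w_2\}$'' of \Cref{lem:3_vertex_neighbors}, together with the potential inequalities of \Cref{lem:key}, rule out overlaps and allow the per-satellite cost analysis to be added up honestly.
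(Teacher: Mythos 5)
Your proposal takes a coloring-extension approach for every clause, whereas the paper's proof of this lemma is almost entirely structural: for $d(x)\geq 5$ the bound is obtained by pure counting, and a coloring argument is invoked only in the tight $d(x)=4$ case. Concretely, the paper first shows (via a potential computation on $\widehat{T}_u\cup\widehat{T}_v$ and \Cref{lem:key}) that $x$ has at most one poor pseudo-neighbor; then it observes that if $x$ does have one, the poor pseudo-neighbor must sit at the degree-$2$ position $v_1$ of its $\widehat{T}$, which forces the three $\widehat{T}$-neighbors of $x$ to be non-poor, so $r(x)\leq d(x)-2$ without any coloring at all. The same kind of argument handles a poor $3$-neighbor in the $\widehat{T}$-configuration ($r(x)\leq d(x)-2$), and a poor $3$-neighbor in the triangle configuration forces the third triangle vertex $y$ to be non-poor, giving $r(x)\leq d(x)-1$. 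Together with \Cref{lem:TwoNeighborCount} this disposes of all cases except $d(x)=4$, which the paper finishes by identifying a negative $4$-cycle $uxwv$ inside a forced $\widehat{T}$ and exhibiting a list-coloring of that $4$-cycle directly (not via \Cref{lem:NegativeTirnagle}).

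Against this, your accounting has a genuine gap: the assertion that ``each bad satellite forbids only one color on $L(x)$'' is false for poor $3$-neighbors. When $u$ is a poor $3$-neighbor of $x$ with the third neighbor $y$ adjacent to $x$, deleting $u$ and its $2$-satellite leaves $y$ colored, and $y$ is also a neighbor of $x$; the constraint imposed on $x$ by the $u$-branch is then a joint constraint involving $\phi(y)$ and a range of possible colors for $u$, not a single forbidden value. Tracking lists of available colors alone is insufficient here because, as you yourself flag, the \emph{structure} of the lists enters; but you do not supply the structural control, and in fact the paper avoids needing it by showing outright that $y$ is non-poor and hence cannot be counted in $r(x)$. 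Your proposal also misses that a poor pseudo-neighbor does not cost $x$ an interval of colors so much as it \emph{guarantees} several non-poor neighbors of $x$ for free; that observation is the crux that makes the $d(x)\geq 5$ cases routine. Finally, for $d(x)=4$ you reach for \Cref{lem:NegativeTirnagle} and then try to eliminate its exceptional list profiles by potential arguments, which is plausible in spirit but is not a substitute for a worked computation --- the paper instead reduces to the negative $4$-cycle $uxwv$ with explicit consecutive lists and gives the coloring by hand, which is simpler and sidesteps the exceptional cases of \Cref{lem:NegativeTirnagle} entirely.
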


\begin{proof}
	 Recall that $x$ and $y$ are pseudo neighbors meaning that there exists a subgraph $\widehat{H}\cong \widehat{T}$ such that $x$ and $y$ form one of the pseudo-neighbor pairs $\{v_{1},v_{3}\}$ or $\{v_{1},v_{4}\}$ in $\widehat{H}$. First we claim that each vertex has at most one poor pseudo neighbor. Towards a contradiction, assume that $x$ has two poor pseudo neighbors $u$ and $v$. Assume $x$, and $u$ ($x$, and $v$, respectively) are contained in a copy of $\widehat{T}$, named $\widehat{T}_u$ ($\widehat{T}_v$). Since each vertex has at most one pseudo neighbor in a copy of $\widehat{T}$, $v\notin \widehat{T}_u$. 
	We have $6 \leq |V(\widehat{T}_u \cup \widehat{T}_v)| \leq 9$. As $\delta(\widehat{T})=2$, the number of edges in $\widehat{T}_u \cup \widehat{T}_v$ grows by at least $2$ for every vertex added. Thus $\widehat{T}_u\cup \widehat{T}_v$ has at least $(2|V(\widehat{T}_u \cup \widehat{T}_v)|-3)$ edges. Hence
	$$p(\widehat{T}_u \cup \widehat{T}_v)\leq 3|V(\widehat{T}_u \cup \widehat{T}_v)|-2\times(2|V(\widehat{T}_u \cup \widehat{T}_v)|-3)=|V(\widehat{T}_u \cup \widehat{T}_v)|-6.$$
	
	By \Cref{lem:key}, $|V(\widehat{T}_u\cup \widehat{T}_v)|=6$ and $\widehat{G}=\widehat{T}_u\cup \widehat{T}_v$. However, one of the 2-neighbors of the poor 3-vertices $u$ and $v$ is not contained in $\widehat{T}_u\cup \widehat{T}_v$, which contradicts $\widehat{G}=\widehat{T}_u\cup \widehat{T}_v$.
	
	Assume $x$ has a poor pseudo neighbor $y$. Observe that $x\neq v_{1}$ as none of pseudo neighbors of $v_{1}$ in $\widehat{T}$ can be a poor vertex. Then $y$ is the degree 2 vertex in $\widehat{T}$. Note that  $d_{\widehat{T}}(y)=2$ and $d_{\widehat{G}}(y)=3$. Thus at least three neighbors of $x$ are non-poor by the structure of $\widehat{T}$. We have $r(x)\leq d(x)-2$. So we may assume that $x$ has no poor pseudo neighbor. 
	
	If $x$ has no poor 3-neighbor, then the claim follows from \Cref{lem:TwoNeighborCount}. Assume $x$ has a poor 3-neighbor $u$ with $N(u)=\{v, x, y\}$ and $d_G(v)=2$. By \Cref{lem:3_vertex_neighbors}, either $xy\in \widehat{G}$ or $\{x,y,u,w_1,w_2\}$ induces a copy of $\widehat{T}$. If $\{x,y,u,w_1,w_2\}$ induces a copy of $\widehat{T}$, then $r(x)\leq d(v)-2$ because of the structure of $\widehat{T}$. 
	
	Now we assume for each such poor 3-neighbor $u$ of $x$, $xy\in E(\widehat{G})$. As $y$ is neither a poor 3-vertex nor a 2-vertex, $r(x)\leq d(x)-1$. It remains to consider the case $d(x)=4$.

	Assume $d_G(x)=4$. Suppose $r(x)\in \{3, 4\}$.  We know that $xy\in E(\widehat{G})$, and by \Cref{lem:32-edge}, $x$ is rich. In other words, $x$ has at most one 2-neighbor. Thus $x$ has at least one more poor 3-neighbor $w$ other than $u$. Since $x$ is not a cut-vertex, $xyu$ and $xyw$ are triangles. By \Cref{lem:PairTriangles}, $\{u,v,x,y,w\}$ induces a copy of $\widehat{T}$. Without loss of generality, assume $wv$ is the only positive edge in $\widehat{G}[u,v,x,y,w]$. As $r(x)\ge 3$, the neighbor of $x$ other than $u,w,y$ is a 2-vertex, called $v'$. By criticality, let $\phi$ be a coloring of $\widehat{G}-\{u,v,x,w,v'\}$.  Without loss of generality, assume $\phi(y)=-3$. By  \Cref{obs:counting_partial_coloring-K2} and \Cref{lem:2-vertex}, it suffices to prove that the negative 4-cycle $uxwv$  is $\mathcal{L}$-colorable, where $L(u)=L(w)=\{1,2,3,4,5\}$, $L(v)=\{ \pm 1,\pm2, \pm 3, \pm4, \pm5\}$, $L(x)\subseteq \{1,2,3,4,5\}$  and $|L(x)|\ge 4$. 
	Depending on whether $1\in L(x)$ or not, one of the followings is the desired coloring:
	\begin{itemize}
	\item $\phi(u)=1$, $\phi(w)=2$, $\phi(x)=5$ and $\phi(v)=4$;
	\item $\phi(u)=4$, $\phi(w)=5$, $\phi(x)=1$ and $\phi(v)=-2$.\qedhere
	\end{itemize} \end{proof}

\section{Discharging}
In \Cref{lem:TwoNeighborCount}, \ref{lem:32-edge}, \ref{lem:Cubic}, \ref{lem:Delta4}, and  \ref{lem:Counting_r(x)}, we have essentially presented a list of forbidden configurations. For the final step of the proof, we use discharging method to derive a contradiction. 
Set the \emph{initial charge} $ch(v)=d(v)-3$ for every vertex $v$.

The discharging rules are as follows:
\begin{itemize}
	\item[(\bf{R1})] Every neighbor of a 2-vertex gives  $1/2$ to the 2-vertex.
	\item[(\bf{R2})] Every rich vertex gives $1/2$ to any poor 3-neighbor and  any poor pseudo-neighbor.
\end{itemize}

Let $ch^*$ denote the final charge after performing the discharging process. To contradict $p(\widehat{G})\ge 0$, it suffices to show that the final charge of each vertex $v$ is nonnegative and at least one vertex has a positive charge.

Assume $d(v)=2$. By (R1), $ch^*(v)=2-3+2\times1/2=0$.

Assume $d(v)=3$. It follows from \Cref{lem:TwoNeighborCount} that any $3$-vertex has at most one 2-neighbor. If $v$ has no 2-neighbor, then final charge of $v$ is 0. If $v$ is poor (has one 2-neighbor), then $v$ gives $1/2$ to the 2-neighbor (R1). But each rich neighbor and pseudo neighbor gives $1/2$ to the poor 3-vertex $v$ (R2). By \Cref{lem:32-edge}, $v$ has at least two neighbors or pseudo neighbors which are rich. Thus the final charge of any poor 3-vertex is more than 0. 

Assume $d(v)=4$. It follows from \Cref{lem:Counting_r(x)} that $r(v)\leq 2$. Thus $ch^*(v)\ge 4-3-2\times 1/2\ge 0$. 

Assume $d(v)= 5$. By \Cref{lem:Counting_r(x)}, the final charge of any  5-vertex is nonnegative. 

Assume $d(v)= 6$. It follows \Cref{lem:Counting_r(x)} that  $r(v)\leq 5$. Thus $ch^*(v)\ge 6-3-5\times 1/2> 0$. 

Assume $d(v)\ge 7$. By \Cref{lem:Counting_r(x)}, $ch^*(v)\ge d(v)-3-d(v)\times 1/2> 0$.  

As the final charge of any poor $3$-vertex and any $6^+$-vertex is bigger than 0, we may assume $\widehat{G}$ has no $(3,2)$-edge and $\Delta(G)\leq 5$. Since a rich 4-vertex has a positive final charge, and by \Cref{lem:Cubic} and \Cref{lem:Delta4}, we conclude that $\Delta(\widehat{G})= 5$ and $\widehat{G}$ has no poor $3$-vertex. Assume $d_G(v)=5$. 
Then by \Cref{lem:Counting_r(x)}, $r(v)\leq 3$. So $ch^*(v)\ge 1/2$ and we are done.

\section{Conclusion}
The circular chromatic number of a signed graph is a recent definition which extends similar notions from graphs and is a refinement of the notions of balanced coloring and $0$-free coloring. The specific value of $10/3$ is the best upper bound for the circular chromatic number of signed $K_{4}$-minor free graphs, see \cite{PZ22}. For some similar recent results see \cite{KNNW23}, \cite{ZZ23}.

An extension of the Brooks' theorem for $0$-free coloring of graphs of even maximum degree is given in \cite{MRS16}. It remains an open problem to give the best upper bound for the circular chromatic number of a signed (simple) graphs of a given odd maximum degree. The question appears rather challenging. 

In this work we addressed the first case of the problem, showing that any signed (simple) subcubic graph admits a circular $10/3$-coloring unless it is isomorphic to $(K_4, -)$. There are examples of signed (simple) subcubic graphs whose circular chromatic number is $10/3$; we have already seen that the signed graph $\widehat{T}$ has circular chromatic number $\frac{10}{3}$. Another example is a special signature on the Petersen graph as depicted in \Cref{fig:Petersen}. We note that this signed graph is of girth 5. We do not know of a simple proof for the fact that this signed graph does not admit any circular $r$-coloring for $r<\frac{10}{3}$. A proof that it does not admits a circular 3-coloring is given in \cite{BHNSW24}. One can then use the argument of tight cycle for the circular chromatic number to show that values between $3$ and $\frac{10}{3}$ are not possible either.

	\begin{figure}[h!]
	\centering
	\begin{tikzpicture}[scale=2.3, baseline=(current bounding box.center),
		node style/.style={circle,draw,inner sep=0.3pt, minimum size=3mm},
		dashed edge/.style={blue, dashed, thick},
		edge/.style={red, thick}
		]
		
		\foreach \i in {0,...,8} {
			\node[node style] (v\i) at ({90 - 40 * \i}:1) {}; 
		}
		
		\node[node style] (c) at (0,0) {};
		
		\foreach \i in {0,...,8} {
			\pgfmathtruncatemacro{\j}{mod(\i+1,9)}
			\draw[red] (v\i) -- (v\j);
		}
		
		\foreach \i in {0,3,6} {
			\draw[red] (c) -- (v\i);
		}
		
		\draw[blue, dash pattern=on .41mm off .41mm] (v1) -- (v5);
		\draw[blue, dash pattern=on .41mm off .41mm] (v2) -- (v7);
		\draw[blue, dash pattern=on .41mm off .41mm] (v4) -- (v8);
		
	\end{tikzpicture}
	\caption{The signed Petersen graph}
	\label{fig:Petersen}
\end{figure}
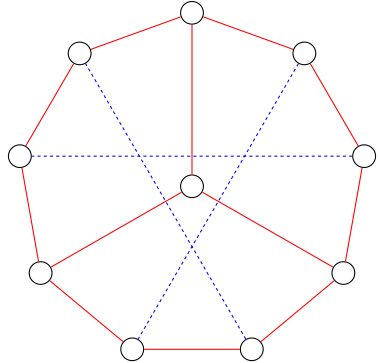

To prove the Brooks type theorem for circular coloring of signed graphs of maximum degree 3, we used the potential technique, proving that any signed simple graph which does not admit a circular $\frac{10}{3}$-coloring  contains either a $(K_4, -)$  or a subgraph on $n$ vertices with at least $\frac{3n+1}{2}$ edges, thus in particular a vertex of degree at least 4. This lower bound of $\frac{3n+1}{2}$ does not seem to be the tight value and it remains an open question to find the best lower bound. 

The sparest examples of $\frac{10}{3}$-critical signed graphs we know are basically $\frac{10}{3}$-critical graphs on $n=47k+8$ vertices and $e=84k+6$ edges. The construction is as follows. First consider an indicator construction $(I, s, t)$ which is built from $K_4$ by splitting one vertex to two vertices named $s$ and $t$, where one is of degree 2 and the other is of degree 1, see \Cref{fig:Indicator}.
Observe that $I$ admits a circular $\frac{10}{3}$-coloring but in any such a coloring the vertices $s$ and $t$ must be at distance at least $\frac{2}{3}$.

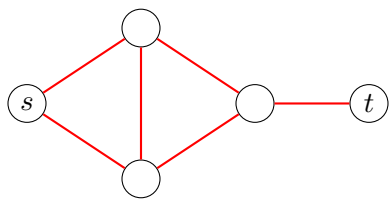
\begin{figure}
	\centering
	\begin{tikzpicture}[scale=0.5,
		baseline=(current bounding box.center),
		node style/.style={circle,draw,inner sep=0.5pt, minimum size=5mm},
		dashed edge/.style={blue, dashed, thick},
		edge/.style={red, thick}
		]
		
		\node[node style] (s) at (0, 0) {$s$};
		\node[node style] (x1) at (3, 2) {};
		\node[node style] (x2) at (3, -2) {};
		\node[node style] (x3) at (6, 0) {};
		\node[node style] (t) at (9, 0) {$t$};
		
		\draw[edge] (s) -- (x1) -- (x2) -- (x3) -- (t);
		\draw[edge] (s) -- (x2);
		\draw[edge] (x1) -- (x3);
	\end{tikzpicture}
	\caption{The indicator construction $(I, s, t)$}
	\label{fig:Indicator}
\end{figure}

Let $G$ be a $6$-critical graph in the classic sense, that is a graph whose chromatic number is 6 but every proper subgraph is 5-colorable. Observe that with our notation for circular coloring this would be a 5-critical graph or a $K_5$-critical graph. The classic Haj\'{o}s' construction starting from $K_6$, gives a sequence of $6$-critical graphs where the $k^{\text th}$ graph is on $5k+1$ vertices and has $14k+1$ edges. The density ration of $\frac{14k+1}{5k+1}$ is proved to be the best lower bound for the density of $6$-critical graphs \cite{KY14JCTB}. Now build a graph $I(G)$ from $G$ by replacing each edge $uv$ with a copy of the gadget $(I, s,t)$ where $s$ and $t$ are identified with the end vertices $u$ and $v$ while the other three vertices are distinct for each edge. The resulting graph has $(5k+1)+3\times(14k+1)=47k+4$ vertices and $6\times(14k+1)=84k+6$ edges. Any circular $\frac{10}{3}$-coloring of $I(G)$, by projecting the circle into a circle of circumference $5$, can be transformed to a circular 5-coloring of $G$. Thus $I(G)$ does not admit a $\frac{10}{3}$-coloring. We can verify that removing any edge we can get a $\frac{10}{3}$-coloring, concluding that $I(G)$ is $\frac{10}{3}$-critical, it has the density of $\frac{84k+6}{47k+4}$.     

Our final note is that the values of $\frac{10}{3}$ for circular chromatic number, which was focus of this work is among special numbers of the form $\frac{4q-2}{q}$. These values are of special interest because  $K_{4q-2;q}^s$ is the signed complete graph with positive loops where each vertex has $2q-1$ positive and $2q-1$ negative neighbors. Our focus in this work was the case $q=3$. The main focus of \cite{BHNSW24} has been the case $q=2$ of this sequence. The limit of this sequence is about circular 4-coloring which relates to signature packing problem and is studied in \cite{NY25+}.

\section*{Acknowledgment} This work has received support under the program ``Investissement d'Avenir" launched by the French Government and implemented by ANR, with the reference ``ANR‐18‐IdEx‐0001" as part of its program ``Emergence".

\bibliographystyle{plain}
\bibliography{references}

\end{document}